\documentclass[11pt]{amsart}
\usepackage[utf8]{inputenc}
\usepackage{amsmath,amssymb,amsxtra,anysize,xcolor,tikz,tikz-cd,combelow,hyperref}
\usetikzlibrary{arrows,decorations.markings}

\newcommand{\GCD}{\mathrm{GCD}}
\newcommand{\C}{\mathbb{C}}
\newcommand{\Z}{\mathbb{Z}}

\newcommand{\cO}{\mathcal{O}}
\newcommand{\cE}{\mathcal{E}}

\newcommand{\dinv}{\mathrm{dinv}}
\newcommand{\codinv}{\mathrm{codinv}}
\newcommand{\area}{\mathrm{area}}
\newcommand{\SYT}{\mathrm{SYT}}

\newcommand{\Gaps}{\mathrm{Gaps}}
\newcommand{\Dyck}{\mathrm{Dyck}}
\newcommand{\Gen}{\mathrm{Gen}}
\newcommand{\Inv}{\mathrm{Inv}}
\newcommand{\Ord}{\mathrm{Ord}}

\newtheorem{theorem}{Theorem}[section]
\newtheorem{corollary}[theorem]{Corollary} 
\newtheorem{lemma}[theorem]{Lemma} 
\newtheorem{proposition}[theorem]{Proposition} 
\newtheorem{example}[theorem]{Example}
\newtheorem{conjecture}[theorem]{Conjecture}
  
\newtheorem{remark}[theorem]{Remark}  
\newtheorem{definition}[theorem]{Definition}

\newcommand{\free}{\mathrm{free}}

\author{Eugene Gorsky}
\address{Department of Mathematics, University of California Davis\newline One Shields Avenue, Davis CA 95616 USA}
\email{egorskiy@math.ucdavis.edu}
\author{Mikhail Mazin}
\address{Mathematics Department, Kansas State University\newline Cardwell Hall, 1228 N M.L.K. Jr. Dr Suite 138, Manhattan, KS 66506 USA}
\email{mmazin@math.ksu.edu}
\author{Alexei Oblomkov}
\address{Department of Mathematics and Statistics,
University of Massachusetts\newline
Lederle Graduate Research Tower, 710 North Pleasant Street Amherst, MA 01003-9305 USA}
\email{oblomkov@math.umass.edu}

\title{Generic curves and non-coprime Catalans}

\begin{document}

\keywords{Compactified Jacobians, Dyck paths, $q,t$-Catalan numbers}
\subjclass{14H20,14H40,05E14}

\begin{abstract}
We compute the Poincar\'e polynomials of the compactified Jacobians for plane curve singularities with Puiseux exponents $(nd,md,md+1)$, and relate them to the combinatorics of $q,t$-Catalan numbers in the non-coprime case. We also confirm a conjecture of Cherednik and Danilenko for such curves.

\end{abstract}

\maketitle

\section{Introduction}

In this paper, we study the topology of compactified Jacobians of plane curve singularities. We focus on the case where the curve is reduced and locally irreducible (or {\em unibranched}), and it is known \cite{AIK,AK} that in this case compactified Jacobian is irreducible as well. Throughout the paper we work over $\mathbb{C}$. We explain where  this assumption is used, in case our reader is curious about the extension of the result to other fileds.

Compactified Jacobians play an important role in modern geometric representation theory. First, they are closely related to Hilbert schemes of points on singular curves, singular fibers in the Hitchin fibration and  affine Springer fibers. In particular, counting points in the compactified Jacobians over finite fields is related to certain orbital integrals \cite{Laumon,KivTsai,Tsai}. Recent works \cite{GK,HKW,GKO} relate them to the representation theory of Coulomb branch algebras, defined by Braverman, Finkelberg and Nakajima \cite{BFN}.

Second, a set of conjectures of the third author, Rasmussen and Shende \cite{OS,ORS} relates the homology of compactified Jacobians to the {\em Khovanov-Rozansky homology} of the corresponding knots and links. In particular, the conjectures imply that the homology of the compactified Jacobian is expected to be determined by the topology of the link or, in the unibranched case, by the collection of Puiseux pairs of the singularity.

The progress in explicit computations of the homology of compactified Jacobians has been quite slow. For the quasi-homogeneous curves $C=\{x^m=y^n\}, \GCD(m,n)=1$, the homology was computed in many sources, starting with Lusztig and Smelt \cite{LS}. The key observation is that in this case $\overline{JC}$ admits a paving by affine cells. These cells and their dimensions have been given a number of combinatorial interpretations in \cite{GM1,GM2,GMV,Hikita}, where they were related to $q,t$-Catalan combinatorics. In \cite{OY1,OY2} the third author and Yun determined the ring structure on the homology in this case.

In a different direction, Piontkowski \cite{Piont} have computed the homology of compactified Jacobians for plane curve singularities with one Puiseux pair defined by the parametrization $(x,y)=(t^n,t^m+\ldots), \GCD(m,n)=1$. He showed that $\overline{JC}$ again admits an affine paving, and the combinatorics of cells depends only on $(m,n)$ and hence agrees with the quasi-homogeneous case $(x,y)=(t^n,t^m)$. Moreover, Piontkowski computed the cell decompositions for some singularities with two Puiseux exponents (see Section \ref{sec: piont}), where the combinatorics becomes rather subtle. 

In our main theorem, we vastly generalize the results of Piontkowski and prove the following.

\begin{theorem}
\label{thm: intro main}
Suppose $\GCD(m,n)=1$ and $d\ge 1$, consider the plane curve singularity $C$ defined by the parametrization
\begin{equation}
\label{eq: def generic}
(x(z),y(z))=(t^{nd},t^{md}+\lambda t^{{md}+1}+\ldots),\quad \lambda\neq 0
\end{equation}
Then:

a) The compactified Jacobian $\overline{JC}$ admits an affine paving where the cells are in bijection with Dyck paths $D$ in an $(nd)\times (md)$ rectangle. 

b) The Poincar\'e polynomial of $\overline{JC}$ is given by 
$$
P_{\overline{JC}}(T)=\sum_{D\in \Dyck(nd,md)}T^{2(\delta-\dinv(D))}
$$
where $\dinv$ is a certain statistics on Dyck paths defined in Section \ref{sec: combi}, and $\delta$ is the number of boxes weakly under the diagonal in an $(nd)\times (md)$ rectangle.

c) In particular, the Poincar\'e polynomial does not depend on $\lambda$ (as long as it is nonzero) or on the higher order terms in \eqref{eq: def generic}.
\end{theorem}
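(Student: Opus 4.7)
The plan is to realize $\overline{JC}$ as the moduli space of torsion-free rank-one $\cO_C$-modules, embedded as fractional $\cO_C$-ideals $M \subset \C((t))$ up to scaling, and to stratify by \emph{module type} $\Delta(M)=\{v(f):f\in M\setminus\{0\}\}\subset \Z$, where $v$ is the $t$-adic valuation. Each such $\Delta$ is a finitely generated $\Gamma$-subsemi-module of $\Z$, where $\Gamma = \langle nd,\,md,\,nmd+1\rangle$ is the value semigroup of $\cO_C$; the third generator arises as $v(y^n-x^m) = nmd+1$, which is available precisely because $\lambda \neq 0$. After a suitable normalization, the possible $\Delta$ form a finite set and yield a stratification $\overline{JC}=\bigsqcup_\Delta C_\Delta$.

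\textbf{Combinatorial bijection.} I would next set up an explicit bijection between normalized $\Gamma$-semi-modules $\Delta$ and Dyck paths $D_\Delta$ in the $(nd)\times(md)$ rectangle, for example by recording the minimal $\Gamma$-generators of $\Delta$ in each residue class modulo $md$ (or, dually, modulo $nd$). Because $\gcd(nd,md) = d$, these paths may touch the diagonal at interior lattice points, which is the key combinatorial feature distinguishing the non-coprime setting from Piontkowski's original $d=1$ case.

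\textbf{Affine paving of the strata.} The heart of the proof is to show that each $C_\Delta$ is isomorphic to an affine space $\mathbb{A}^{k(\Delta)}$. I would fix a distinguished generating set $\{g_i\}$ of $\Delta$ as a $\Gamma$-module and parameterize modules $M\in C_\Delta$ by canonical lifts $\tilde g_i \in M$ with $v(\tilde g_i) = g_i$: the coefficients of certain ``free'' powers of $t$ in each $\tilde g_i$ serve as affine coordinates, while all other coefficients are normalized to zero. The $\cO_C$-module axioms (that $x\tilde g_i$ and $y\tilde g_i$ again lie in $M$) must then impose only linear conditions on the free coefficients, cutting out an affine subspace of the expected dimension. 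Finally I would read $k(\Delta)$ off the Dyck path $D_\Delta$ and identify it with $\delta - \dinv(D_\Delta)$ using the definitions of Section \ref{sec: combi}.

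\textbf{Main obstacle.} The central difficulty is proving that the $\cO_C$-module relations really do cut out a linearly defined subspace for \emph{every} semi-module $\Delta$, rather than a more complicated variety. Piontkowski already observed that in related non-coprime examples without sufficient genericity this linearization can fail. The hypothesis $\lambda\neq 0$ is precisely what is needed: it forces $nmd+1 \in \Gamma$, providing enough extra ``room'' in the ring to kill higher-order syzygy obstructions order by order in a triangular induction. Once every $C_\Delta \cong \mathbb{A}^{k(\Delta)}$ is established with the predicted dimension, parts (b) and (c) of the theorem follow at once: the pair $(D_\Delta, k(\Delta))$ depends only on the topological data $(nd,md,md+1)$, and not on $\lambda$ or the higher-order terms in \eqref{eq: def generic}.
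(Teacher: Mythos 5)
Your geometric outline (stratify by the value set $\Delta_M$, write down canonical lifts and the resulting equations, and use $\lambda\neq 0$ to run a triangular elimination) is in the right spirit and matches the paper's Section \ref{sec: geometry}. But the combinatorial organization has a genuine gap: you propose a direct bijection between normalized $\Gamma$-semi-modules and Dyck paths, and you assume every stratum $C_\Delta$ is a nonempty affine cell. For $d>1$ both assumptions fail. Example \ref{ex: admissible 4 6 7} exhibits a $\Gamma$-invariant subset for $(t^4,t^6+t^7)$ whose stratum is empty, so the $\Gamma$-semi-modules strictly outnumber the Dyck paths and cannot index the paving. The missing ingredient is the \emph{admissibility} condition (Definition \ref{def: admissible}): a stratum $J_\Delta$ is nonempty precisely when $\Delta$ is admissible (Lemma \ref{lem: admissible necessary} and Proposition \ref{prop:adm-tria}), and admissibility is strictly stronger than $\Gamma$-invariance when $d>1$. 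The bijection with Dyck paths is then built indirectly: the paper imports the equivalence relation of \cite{GMVDyck} on $(nd,md)$-invariant subsets, proves that every equivalence class contains a unique admissible representative (Theorem \ref{thm: unique admissible}), and composes with the bijection of Theorem \ref{thm: GMVDyck} between equivalence classes and Dyck paths. This entire layer — admissibility, equivalence classes, and the uniqueness theorem — is the real combinatorial content missing from your outline.

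Two smaller inaccuracies. First, the equations defining $J_\Delta$ are not linear in the coordinates $g_{a;x}$ (they contain products of them); they are \emph{triangular}, meaning each can be used to eliminate one new variable in terms of strictly smaller ones in a carefully chosen partial order (Lemma \ref{lemma:expansionl}), and that is what makes the elimination succeed. Second, the role of $\lambda\neq 0$ is sharper than "it puts $nmd+1$ into $\Gamma$": it is exactly what makes the summed equation $\sum_i s_{a_{j,i}+dm;x}=0$ isolate a new coordinate with invertible coefficient $\lambda$ when $\Delta$ is admissible and $x$ is suspicious, and what forces $n\lambda=0$ — hence emptiness — in the non-admissible case.
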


We call the plane curve singularities \eqref{eq: def generic} {\em generic} (or generic curves in short), since a generic plane curve singularity  with the first Puiseux pair $(nd,md)$ has this form.  In the notations of \cite{Piont}, for $d>1$ these are the plane curve singularities with Puiseux exponents $(nd,md,md+1)$ while for $d=1$ these have Puiseux exponents $(n,m)$. We recall some basic facts about such singularities in Section \ref{sec: background}.

The affine paving in the statement of Theorem \ref{thm: intro main} is obtained by intersecting the compactified Jacobian with the Schubert cell paving of the affine Grassmannian. Thus one can define a partial order on the strata such that  the boundary  of an affine cell  lies in the union of the cells with smaller indices in this order. Hence we conclude:

\begin{corollary}
For generic plane curve singularities, the cohomology  $H^*(\overline{JC})$ is supported in even degrees and the weight filtration on  $H^*(\overline{JC})$ is pure in the sense of \cite{GKM,GKM2}.
\end{corollary}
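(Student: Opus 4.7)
The plan is to leverage the affine paving from Theorem \ref{thm: intro main}(a) together with the partial order mentioned in the preceding paragraph, under which the closure of each cell is contained in the union of cells indexed by smaller elements. First I would refine this partial order to a compatible total order, producing a sequence of closed subvarieties $\emptyset = X_0 \subsetneq X_1 \subsetneq \cdots \subsetneq X_N = \overline{JC}$ such that each stratum $U_i := X_i \setminus X_{i-1}$ is isomorphic to an affine space $\mathbb{A}^{d_i}$, where $d_i$ is the dimension of the cell attached to the $i$-th Dyck path in the order.

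Next I would run the long exact sequence in compactly supported cohomology for each pair $(X_i, X_{i-1})$:
\[
\cdots \to H^k_c(U_i) \to H^k_c(X_i) \to H^k_c(X_{i-1}) \to H^{k+1}_c(U_i) \to \cdots .
\]
Because $H^*_c(\mathbb{A}^{d_i})$ is one-dimensional, concentrated in degree $2d_i$, and carries a pure Hodge structure of weight $2d_i$, induction on $i$ shows that $H^*_c(X_i)$ is supported in even degrees and that each $H^{2k}_c(X_i)$ is pure of weight $2k$. At each step the long exact sequence collapses into short exact sequences; since they connect pure Hodge structures of the same weight in the same cohomological degree, the extensions remain pure. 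Since $\overline{JC}$ is proper, compactly supported cohomology agrees with ordinary cohomology, which yields both conclusions of the corollary.

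The step that genuinely requires input beyond standard homological algebra is the closure property of the paving, namely that the boundary of each affine cell is contained in the union of cells of smaller index. This is asserted in the paragraph preceding the corollary as a consequence of realizing the paving by intersecting $\overline{JC}$ with the Schubert stratification of the affine Grassmannian, where the analogous closure statement is classical. Beyond this, I do not anticipate a serious obstacle: the argument is essentially the standard Białynicki-Birula style reasoning used in \cite{GKM,GKM2} to deduce purity and equivariant formality from an algebraic cell decomposition.
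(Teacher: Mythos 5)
Your proof is correct and follows the same route as the paper: the key input, which you correctly isolate, is that the affine paving comes from intersecting $\overline{JC}$ with the Schubert stratification of the affine Grassmannian, so each cell's closure lies in a union of smaller cells; from there both you and the paper invoke the standard Białynicki-Birula/GKM-style argument. The paper simply states the closure property and cites \cite{GKM,GKM2} for the implication, whereas you spell out the induction via compactly supported cohomology and the collapse of the long exact sequences—the content is the same.
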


Very recently, Kivinen and Tsai \cite{KivTsai} used completely different methods ($p$-adic harmonic analysis) to count points in arbitrary compactified Jacobians  over finite fields $\mathbb{F}_q$. They proved that the result is always a polynomial in $q$ and hence recovers the weight polynomial of  $\overline{JC}$ (see Theorem \ref{thm: KivTsai} for more details). Given the above purity result, for generic plane curve singularities the Poincar\'e polynomial of $\overline{JC}$ agrees with the weight polynomial and our result agrees with theirs. 


As a corollary of Theorem \ref{thm: intro main}, we get that the Euler characteristic of $\overline{JC}$ is given by the number of Dyck paths in $(nd)\times (md)$ rectangle. For example, for the singularity $C=(t^4,t^6+t^7)$ the Euler characteristic $\chi(\overline{JC})=23$ is equal to the number of Dyck paths in a $4\times 6$ rectangle (see Example \ref{ex: 467}), in agreement with \cite{Piont}. In general, the combinatorial results of \cite{Bergeron,Bizley} immediately imply the following.

\begin{corollary}
\label{cor: bizley}
Let us fix coprime $m,n$, and for each $d\ge 1$ let $C_d$ be a generic plane curve singularity  \eqref{eq: def generic}. Then
$$
1+\sum_{d=1}^{\infty}z^d\chi\left(\overline{JC_{d}}\right)=\exp\left(\sum_{d=1}^{\infty}\frac{z^d}{(m+n)d}\binom{md+nd}{md}\right).
$$
\end{corollary}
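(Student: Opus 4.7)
My plan is to reduce the statement to a classical combinatorial identity via Theorem \ref{thm: intro main}.

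First, I would invoke part (a) of Theorem \ref{thm: intro main}: the compactified Jacobian $\overline{JC_d}$ admits an affine paving whose cells are indexed by $\Dyck(nd,md)$. Since each affine cell has Euler characteristic $1$ and the cells partition the variety, this immediately gives
$$\chi\left(\overline{JC_d}\right) = |\Dyck(nd,md)|.$$
Plugging this into the left-hand side, the corollary becomes the purely combinatorial identity
$$1+\sum_{d=1}^{\infty}z^d\,|\Dyck(nd,md)| = \exp\left(\sum_{d=1}^{\infty}\frac{z^d}{(m+n)d}\binom{md+nd}{md}\right).$$

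Next, I would recognize this as the classical formula of Bizley \cite{Bizley}, which computes the generating function for lattice paths in an $(nd)\times(md)$ rectangle that stay weakly below the diagonal of slope $m/n$; a modern combinatorial proof is given by Bergeron \cite{Bergeron}. The mechanism behind Bizley's identity is a cycle-lemma argument: every unrestricted lattice path decomposes uniquely along its lowest tangencies to the diagonal into a sequence of primitive Dyck-like pieces, and grouping the $\binom{(m+n)d}{md}$ total paths into cyclic orbits of length $(m+n)d$ produces the factor $\frac{1}{(m+n)d}\binom{(m+n)d}{md}$ on the right. Taking the logarithm of the resulting multiplicative decomposition yields the exponential form above.

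The only step requiring geometric input is the equality $\chi(\overline{JC_d}) = |\Dyck(nd,md)|$, which is immediate from the affine paving; everything else is a quotation of Bizley. Consequently the corollary poses no real obstacle once Theorem \ref{thm: intro main} is in hand, and the main task is simply to verify that the conventions for ``Dyck paths in an $(nd)\times(md)$ rectangle'' used in Section \ref{sec: combi} agree with the weakly-below-the-diagonal paths counted by Bizley, which is a straightforward check.
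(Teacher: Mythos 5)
Your proposal is correct and mirrors the paper's own argument: it deduces $\chi(\overline{JC_d}) = |\Dyck(nd,md)|$ from the affine paving of Theorem \ref{thm: intro main}(a) and then quotes the Bizley--Bergeron generating function identity, exactly as the paper does. The extra paragraph sketching the cycle-lemma mechanism behind Bizley's formula is not in the paper but is accurate background.
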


Next, we address the conjectures of Cherednik, Danilenko and Philipp \cite{CheD,CheP}, which proposed an expression for the Poincar\'e polynomials of compactified Jacobians in terms of certain matrix elements of certain operators in the double affine Hecke algebra, see Section \ref{sec: daha} for more details. We are able to prove this conjecture for generic plane curve singularities:

\begin{theorem}
\label{thm: intro daha}
Consider the two-variable polynomial
\begin{equation}
\label{eq: def qtcat}
C_{nd,md}(Q,T)=\sum_{D\in \Dyck(dn,dm)}Q^{\area(D)}T^{\dinv(D)}
\end{equation}
where $\area(D)$ is the number of full boxes between a Dyck path $D$ and the diagonal. It satisfies the following properties:

a) It is symmetric in $Q$ and $T$ : $C_{nd,md}(Q,T)=C_{nd,md}(T,Q)$

b) At $Q=1$ it specializes to the Poincar\'e polynomial of $\overline{JC}$ (up to a linear change of the variable). 

c) It is given by the matrix element $(\gamma_{n,m}(e_d)(1),e_{nd})$ of the elliptic Hall algebra operator $\gamma_{n,m}(e_d)$.

d) It agrees with the Cherednik-Danilenko conjecture (Conjecture \ref{conj: CD}).
\end{theorem}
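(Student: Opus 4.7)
The plan is to prove the four statements in an order that exploits their dependencies, treating (c) as the central claim from which (a), (b) and (d) follow by combinatorial or algebraic manipulation.

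First, (b) is essentially immediate from Theorem \ref{thm: intro main}(b). Setting $q=1$ kills the area statistic, leaving $C_{nd,md}(1,t)=\sum_{D}t^{\dinv(D)}$; after the substitution $t\mapsto t^{-2}$ and multiplication by $t^{2\delta}$, this becomes $\sum_D t^{2(\delta-\dinv(D))}=P_{\overline{JC}}(t)$. So (b) is a two-line verification once (c) of Theorem \ref{thm: intro main} is known.

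Next, for (c) — the main technical content — I would invoke the rational/rectangular Shuffle Theorem, proved in the coprime case by Carlsson--Mellit and extended to the non-coprime rectangular case by Mellit (and, for the Extended Delta conjecture, by Blasiak--Haiman--Morse--Pun--Seelinger). That theorem expresses $\gamma_{n,m}(e_d)(1)\in\Lambda$ as a sum over labeled lattice paths in an $nd\times md$ rectangle, weighted by $q^{\area}t^{\dinv}$ times an LLT/Schur contribution. Taking the Hall inner product with $e_{nd}$ collapses the labels to trivial fillings and leaves precisely the unlabeled $q,t$-Catalan generating function \eqref{eq: def qtcat}. The delicate step here is matching the $\dinv$ statistic defined in Section \ref{sec: combi} with the one appearing in the Shuffle Theorem: in the coprime case this agreement is classical, but for $\GCD(nd,md)=d>1$ the ``correct'' rectangular $\dinv$ (as formulated by Bergeron--Garsia--Sergel--Xin and Hicks--Leven) must be reconciled with our geometric/Piontkowski $\dinv$ arising from the affine paving.

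With (c) in hand, part (a) follows from the $q\leftrightarrow t$ symmetry built into the elliptic Hall algebra $\cE$: the Schiffmann--Negut anti-involution exchanges $q$ and $t$, sends $\gamma_{n,m}(e_d)$ to a conjugate operator, and preserves the Hall pairing against the self-dual $e_{nd}$, so the matrix element is symmetric in $q,t$. Alternatively, once (d) is established, (a) can also be read off from the manifest DAHA symmetry in Conjecture \ref{conj: CD}. Finally, (d) reduces to a dictionary between formalisms: the Schiffmann--Vasserot embedding of the (appropriate limit of the) spherical DAHA into $\cE$ carries the operators appearing in Conjecture \ref{conj: CD} to $\gamma_{n,m}(e_d)$, and intertwines the respective matrix elements.

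I expect the bulk of the work — and the most serious obstacle — to lie in (c), specifically in executing the non-coprime rectangular Shuffle Theorem and matching its combinatorial $\dinv$ to the one used in Theorem \ref{thm: intro main}. Parts (a), (b) and (d) should then be comparatively short corollaries: (b) is a direct specialization, (a) is an elliptic Hall algebra symmetry, and (d) is a translation of conventions.
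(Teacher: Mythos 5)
Your overall strategy matches the paper: parts (a) and (c) follow from the Compositional Rational Shuffle Theorem (BGLX Conjecture~3.1, proved by Mellit), (b) is a rephrasing of Theorem~\ref{thm: intro main}(b), and (d) is derived from (c). For (b) you are right, modulo the typo (you mean Theorem~\ref{thm: intro main}(b), not (c)). Your worry about matching the rectangular $\dinv$ of Section~\ref{sec: combi} with the one in the Shuffle Theorem is a reasonable thing to flag; the paper treats this as settled by citing \cite{GMVDyck} and \cite{BGLX} directly, and does not spend space on it.

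Where the proposal has a genuine gap is in (d). Describing it as ``a dictionary between formalisms'' via the Schiffmann--Vasserot embedding only accounts for recasting Conjecture~\ref{conj: CD} in elliptic Hall algebra language, which the paper has already done in its formulation. What (d) actually requires, once (c) is known, is to run the iterated Cherednik--Danilenko construction for the specific Puiseaux data of a generic curve. One must first read off that the characteristic pairs are $(r_1,s_1)=(n,m)$ and $(r_2,s_2)=(d,1)$ (since the second Puiseaux exponent of $(t^{nd},t^{md}+\lambda t^{md+1}+\cdots)$ is $md+1=dm_1+s_2$ with $s_2=1$), and then compute the inner step
\[
f_2=\gamma_{d,1}(p_1)(1)=P_{d,1}(1)=e_d,
\]
using the known formula for $P_{d,1}(1)$ (the paper cites \cite[Cor.~6.5]{GN}). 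This identity is the pivot: it converts the CD symmetric function $f_1=\gamma_{n,m}(f_2)(1)$ into $\gamma_{n,m}(e_d)(1)$, at which point (c) applies and gives $(f_1,e_{nd})=C_{nd,md}(q,t)$. Without exhibiting $f_2=e_d$, the reduction of (d) to (c) is only asserted, not proved. The remaining parts of your outline are consistent with the paper's argument.
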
 

Recently, the second author, Caprau, Gonzalez, and Hogancamp were able to show that the polynomial $C_{nd,md}(Q,T)$ is the $a=0$ specialization of the Poincar\'e polynomial of the reduced Khovanov-Rozansky homology of the $(d,mnd+1)$-cable of the $(n,m)$ torus knot (\cite{CGHM}). Combined with Theorem \ref{thm: intro daha}, this confirms a conjecture of the third author, Rasmussen, and Shende in the case of the generic plane curve singularities.

In  Section \ref{sec: daha} we also give a different, manifestly symmetric in $Q,T$ explicit formula for $C_{nd,md}(Q,T)$. The fact that the two formulas agree is a special case of the Compositional Rational Shuffle Theorem, conjectured in \cite{BGLX} and proved in \cite{Mellit}. In fact, parts (a) and (c) of Theorem \ref{thm: intro daha} immediately follow from that theorem, and part (d) follows from (c), see Section \ref{sec: daha} for a detailed proof. Part (b) is  a rephrasing of Theorem \ref{thm: intro main}.

Theorem \ref{thm: intro daha}(a) allows us to give a simple formula for the Poincar\'e polynomial of the compactified Jacobian:

\begin{corollary}
\label{cor: Poincare area intro}
The Poincar\'e polynomial of $\overline{JC}$ equals
$$
T^{2\delta}C_{nd,md}(1,T^{-2})=T^{2\delta}C_{nd,md}(T^{-2},1)=\sum_{D\in \Dyck(nd,md)}T^{2(\delta-\area(D))}.
$$
\end{corollary}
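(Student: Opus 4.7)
The plan is to obtain this corollary as an immediate consequence of Theorem \ref{thm: intro main}(b) combined with the $q,t$-symmetry asserted in Theorem \ref{thm: intro daha}(a). The basic observation is that Theorem \ref{thm: intro main}(b) expresses $P_{\overline{JC}}(t)$ as a generating function weighted by $\dinv$, while the desired formula weights by $\area$; bridging the two is exactly what the symmetry of $C_{nd,md}(q,t)$ accomplishes.

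Concretely, I would proceed in three short steps. First, starting from Theorem \ref{thm: intro main}(b), I factor out $t^{2\delta}$ to rewrite
$$
P_{\overline{JC}}(t) = \sum_{D\in \Dyck(nd,md)} t^{2(\delta-\dinv(D))} = t^{2\delta}\sum_{D\in \Dyck(nd,md)} (t^{-2})^{\dinv(D)}.
$$
Comparing with the definition \eqref{eq: def qtcat}, the sum on the right is precisely the specialization $C_{nd,md}(q,t')\big|_{q=1,\,t'=t^{-2}}$, which yields the first equality $P_{\overline{JC}}(t) = t^{2\delta}C_{nd,md}(1,t^{-2})$.

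Second, for the middle equality I invoke Theorem \ref{thm: intro daha}(a), the $q \leftrightarrow t$ symmetry, evaluated at $(q,t')=(1,t^{-2})$:
$$
C_{nd,md}(1,t^{-2}) = C_{nd,md}(t^{-2},1).
$$
Third, for the last equality I simply expand the definition with $t'=1$, which kills the $\dinv$ exponent and leaves
$$
C_{nd,md}(t^{-2},1) = \sum_{D\in \Dyck(nd,md)} (t^{-2})^{\area(D)},
$$
and multiplying by $t^{2\delta}$ produces the claimed $\area$-weighted sum.

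There is no substantive obstacle in this corollary itself; all the content has already been absorbed into the two inputs. The genuine difficulty is hidden upstream: the $\dinv$-form of the Poincar\'e polynomial requires the affine paving and dimension count of Theorem \ref{thm: intro main}, and the $q,t$-symmetry of $C_{nd,md}$ is the deep fact, relying on the identification with the elliptic Hall algebra matrix element in Theorem \ref{thm: intro daha}(c) and ultimately on the Compositional Rational Shuffle Theorem of \cite{Mellit}.
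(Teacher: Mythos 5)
Your argument is correct and is exactly the reasoning the paper intends: Theorem \ref{thm: intro main}(b) identifies $P_{\overline{JC}}(t)$ with $t^{2\delta}C_{nd,md}(1,t^{-2})$, the $q\leftrightarrow t$ symmetry from Theorem \ref{thm: intro daha}(a) converts this to $t^{2\delta}C_{nd,md}(t^{-2},1)$, and expanding the definition at $t=1$ gives the $\area$-weighted sum. The paper states this corollary without further elaboration precisely because it is this immediate, and your write-up supplies the (short) missing steps faithfully.
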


\subsection{Proof strategy and organization of the paper}

Let us comment on the main ideas in the proof  of Theorem \ref{thm: intro main}. The ring of functions $\cO_C=\C[[t^{nd},t^{md}+\lambda t^{md+1}+\ldots]]$ on the plane curve singularity $C$ has a natural valuation given by the order of a function in the parameter $t$. We consider the semigroup $\Gamma$ of $C$ consisting of all possible orders of functions, it is generated by the numbers $(nd,md,mnd+1)$.

By definition, the compactified Jacobian $\overline{JC}$ is the moduli space of torsion free rank one $\cO_C$-modules with appropriate framing, or, equivalently,  $\cO_C$-submodules of $\C((t))$ with appropriate normalization, see Section \ref{sec: background Jacobian}. Clearly, the set of valuations for any such submodule $M$ is a $\Gamma$-submodule $\Delta_M$ of $\Z$.

We stratify the compactified Jacobian by the possible combinatorial types of submodules $\Delta=\Delta_M$:
$$
J_{\Delta}=\{M\subset \C((t)): \cO_CM\subset M,\ \Delta_M=\Delta\}.
$$ 
Next, we introduce the key definition of an {\bf admissible} $\Gamma$-submodule (Definition \ref{def: admissible}) generalizing the constructions of Piontkowski \cite{Piont} and Cherednik-Philipp \cite{CheP}. In Lemma \ref{lem: admissible necessary} we prove that if $\Delta$ is not admissible then $J_{\Delta}$ is empty. The converse statement is significantly harder:

\begin{theorem}
\label{thm: admissible intro}[(Proposition~\ref{prop:adm-tria})]
Suppose that $\Delta$ is admissible. Then the corresponding stratum $J_{\Delta}\subset \overline{JC}$ is isomorphic to the affine space of dimension 
$$
\dim J_{\Delta}=\sum_{i,j} \left(|\Gaps(a_{j,i})|-|\Gaps(a_{j,i}+md)|\right).
$$
where $\Gaps(x)=[x,+\infty)\setminus \Delta$ and $a_{j,i}$ are the $(nd)$-generators of $\Delta$.
\end{theorem}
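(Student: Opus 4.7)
The goal is to construct an explicit polynomial isomorphism $J_\Delta \xrightarrow{\sim} \mathbb{A}^N$, where $N$ is the claimed dimension, by parametrizing each $M \in J_\Delta$ via canonical generators and then analyzing the resulting $\cO_C$-module equations.

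Since $\Delta$ is closed under $+nd$, every $M \in J_\Delta$ is free of rank $nd$ over $\C[[t^{nd}]]$ and contains, for each $(nd)$-generator $a_{j,i}$, a unique canonical element
\[
\phi_{j,i} \;=\; t^{a_{j,i}} + \sum_{k \in \Gaps(a_{j,i})} c^{(j,i)}_k\, t^k.
\]
Collectively the $\phi_{j,i}$'s generate $M$ over $\C[[t^{nd}]]$, identifying the space of combinatorial-type-$\Delta$ $\C[[t^{nd}]]$-submodules with affine space of total dimension $\sum_{j,i}|\Gaps(a_{j,i})|$. Since $\cO_C = \C[[t^{nd}, y]]$ with $y = t^{md}+\lambda t^{md+1}+\cdots$, the $\cO_C$-module condition reduces to $y\phi_{j,i} \in M$ for every $(j,i)$. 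Using the direct sum decomposition $\C[[t]] = M \oplus \bigoplus_{k \in \Gaps(0)} \C t^k$ afforded by the canonical form, this becomes the vanishing of the "gap part" of $y\phi_{j,i}$, which is supported on $\Gaps(a_{j,i}+md)$ --- giving $|\Gaps(a_{j,i}+md)|$ polynomial equations per generator.

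The arithmetic driving the dimension count is that $md\in\Gamma$ forces $\Delta$ to be $md$-closed, so the $-md$ shift is an injection $\Gaps(a_{j,i}+md) \hookrightarrow \Gaps(a_{j,i})$. This suggests splitting the unknowns into
\[
\{c^{(j,i)}_k\}_{k\in\Gaps(a_{j,i})} \;=\; \underbrace{\{c^{(j,i)}_k : k+md \in \Delta\}}_{\text{free}} \;\sqcup\; \underbrace{\{c^{(j,i)}_k : k+md \notin \Delta\}}_{\text{pivot}},
\]
with exactly $N$ free coordinates. The leading term $t^{md}$ of $y$ forces the pivot $c^{(j,i)}_{k'-md}$ to appear with coefficient $1$ in the equation at position $k' \in \Gaps(a_{j,i}+md)$; the higher $y$-terms and the nonlinear corrections from the gap-projection should contribute only $c$'s at strictly lower positions. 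Proceeding by induction on $k'$ then solves each pivot as a polynomial function of the free coordinates and previously-determined pivots, yielding the isomorphism $\mathbb{A}^N\xrightarrow{\sim}J_\Delta$.

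The main obstacle is verifying that the inductive procedure is genuinely triangular. The subtractions performed during the gap-projection can in principle introduce pivots $c^{(j',i')}_\ell$ associated with other generators $\phi_{j',i'}$, and one must show that these dependencies respect the induction order and that no pivotal coefficient collapses to zero. Admissibility of $\Delta$ (Definition~\ref{def: admissible}) should be precisely the combinatorial condition that rules out such "resonance" configurations in which two different pivots compete at the same position, guaranteeing the well-foundedness of the iteration. Making this precise and matching it against the concrete formulation of admissibility constitutes the substantive part of the argument.
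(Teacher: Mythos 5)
Your high-level strategy is the same as the paper's: parametrize candidate $\C[[t^{nd}]]$-submodules by the coefficients of canonical generators, write the $\cO_C$-module condition as $|\Gaps(a_{j,i}+md)|$ equations per generator, and solve inductively. But the specific pivot structure you propose does not actually yield a triangular system, and this is not a minor gap you can wave at admissibility to close---it is exactly where the substance of the proof lives.

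The issue is that the leading linear part of the equation at position $k'=a_{j,i}+md+x\in\Gaps(a_{j,i}+md)$ is \emph{not} $c^{(j,i)}_{k'-md}$ alone. To reduce $y\phi_{j,i}$ to canonical form one subtracts $t^{\alpha_{j,i} nd}\phi_{j,i+1}$ (using the combinatorial syzygy $a_{j,i}+md=a_{j,i+1}+\alpha_{j,i}nd$), and this contributes $-c^{(j,i+1)}_{k'-\alpha_{j,i}nd}$ at the \emph{same} degree $x$. In the paper's notation the leading linear part is the difference $g^-_{j,i;x}=g_{a_{j,i};x}-g_{a_{j,i+1};x}$ (equation \eqref{equation:gjix-}). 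So your candidate ``pivot'' $c^{(j,i)}_{k'-md}$ is tied in the same equation to a coordinate of the \emph{next} generator at the \emph{same} degree, and whether that second coordinate is itself in your pivot set or not is uncontrolled. As a consequence, for a fixed $(j,x)$, as $i$ ranges over the cycle the system of leading parts $g^-_{j,i;x}$ is not triangular in your coordinates; it is a circulant-like system. When $x$ is $j$-suspicious (i.e.\ $a_{j,i}+md+x\notin\Delta$ for all $i$) you have $n$ equations whose leading parts $g^-_{j,0;x},\dots,g^-_{j,n-1;x}$ sum to zero, so the linear rank is only $n-1$: one equation is genuinely ``missing'' a degree-$x$ pivot.

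The paper's resolution, which is the content you would still need to supply, is a change of coordinates to $g^-_{j,i;x}$ and $g^+_{j;x}=\sum_{i:\,a_{j,i}+x\notin\Delta} g_{a_{j,i};x}$, together with the observation (equation \eqref{equation:gjx+}) that when $x$ is $j$-suspicious the \emph{sum} $\sum_i s_{a_{j,i}+dm;x}$ has leading linear part $\lambda\, g^+_{j;x-1}$---a pivot at degree $x-1$, not $x$. Admissibility is used precisely here, via Lemma~\ref{lem:prev}: it guarantees that some $a_{j,i}+x-1$ is a gap, so the sum defining $g^+_{j;x-1}$ is nonempty and the coefficient $\lambda$ in front of a genuine coordinate is nonvanishing. (Your intuition that admissibility ``rules out resonance'' is not quite how it enters: the degeneracy at degree $x$ still occurs for admissible $\Delta$ whenever $x$ is suspicious; admissibility instead supplies the replacement pivot one degree down.) One also needs the carefully chosen partial order in which $g^+_{j;x}$ sits between degrees $x$ and $x+1$, and $g^+_{j+1;x}<g^+_{j;x}$ for $j\neq d-x-1$ (Corollary~\ref{cor: last ok} makes this consistent), so that the elimination is well-founded. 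Without the $g^-,g^+$ coordinate change and the treatment of the suspicious degrees, your ``split into free and pivot'' scheme has the right cardinality but not the right triangularity, so the inductive elimination you describe would stall.
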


\begin{remark}
Although both  \cite{Piont} and \cite{CheP} use the notions of admissible subsets, there is an important difference between their definitions. In \cite{Piont}, as in this paper, the definition of admissible subsets is purely combinatorial. On the other hand, \cite{CheP} gives a geometric definition by  saying that $\Delta$ is admissible if  $J_{\Delta}$ is nonempty.  Lemma \ref{lem: admissible necessary} and Theorem \ref{thm: admissible intro} show that the two definitions agree for $s=1$.  However, we explain in the next subsection that for $s>1$ the combinatorial and geometric definitions of admissible subsets diverge, and we always use the combinatorial one in this paper. 
\end{remark}

To prove Theorem \ref{thm: admissible intro}, we need to analyze the explicit equations defining $J_{\Delta}$, which occupies most of Section \ref{sec: geometry}. The coordinates on $J_{\Delta}$ are indexed by $\Gaps(a_{j,i})$ and the equations are indexed by $\Gaps(a_{j,i}+md)$ for all $j,i$. We show in Proposition \ref{prop:adm-tria} that with an appropriately defined partial order on the coordinates, the equations can be written in such a way that they express certain coordinates in terms of smaller coordinates with respect to this order. This allows us to eliminate the variables one by one,  until all the equations are exhausted.

\begin{remark}
This approach  follows the ideas of \cite{Piont} and \cite{CheP}. In the nutshell, in both settings of \cite{Piont} and \cite{CheP} the linear parts of the equations are linearly independent and the nonlinear parts depend on smaller coordinates (with respect to the order), hence one can just keep track of the linear parts. \newline
The main new conceptual obstacle that we encounter in the proof of Theorem \ref{thm: admissible intro} is that for sufficiently large $m,n,d$ the {\bf linear parts of many equations become dependent} even if $\Delta$ is admissible, see Example \ref{ex: strata upd} and Lemma \ref{lem: suspicious dependent}. We develop combinatorial machinery to keep track of all such dependencies, and introduce some novel changes of coordinates to resolve these. The order on new variables is also changed. The change of variables heavily uses the fact that $C$ is generic and  $\lambda\neq 0$.
\end{remark}

Theorem  \ref{thm: admissible intro} immediately yields the Poincar\'e polynomial of $\overline{JC}$, described in terms of admissible subsets. Still, a significant combinatorial work is required to relate it to Dyck paths which appear in Theorem \ref{thm: intro main}. Following \cite{GMVDyck}, we introduce an equivalence relation on all $(nd,md)$-invariant subsets of $\Z_{\ge 0}$. The main result of \cite{GMVDyck} establishes a bijection between the equivalence classes and Dyck paths in $(nd)\times (md)$ rectangle. In this paper, we prove Theorem \ref{thm: unique admissible} which shows that in each equivalence class there exists a unique admissible subset. 
By combining these results, we establish a bijection between the admissible subsets and the Dyck paths, and complete the proof of Theorem \ref{thm: intro main}.

Finally, in Section \ref{sec: daha} we recall some results on rational shuffle theorem, elliptic Hall algebra and DAHA, and prove Theorem \ref{thm: intro daha}.

\subsection{Beyond generic curves}

The last section of the paper is focused on more general plane curve singularities 
$$(x(t),y(t))=\left(t^{nd},t^{md}+\lambda t^{md+s}+\ldots\right)$$ 
with two Puiseux pairs. We assume $\GCD(m,n)=\GCD(d,s)=1$ and $\lambda\neq 0$, the case $s=1$ corresponds to generic curves above. 

The definition of admissible subsets has a natural generalization which we call $s$-admissible (see Definition \ref{def: s admissible}). Combinatorially, we can parametrize the $s$-admissible subsets as following:

\begin{theorem}
\label{thm: intro s admissible}
There is a bijection between the $s$-admissible subsets and {\bf cabled Dyck paths} which are parametrized by the choice of a $(d,s)$ Dyck path $P$ with vertical runs $v_i(P)$ and a tuple of $(v_i(P)n,v_i(P)n)$-Dyck paths.
\end{theorem}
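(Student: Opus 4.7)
The plan is to construct the bijection explicitly in two stages, generalizing the strategy behind the proof of Theorem \ref{thm: intro main} in the generic ($s=1$) case.

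First, I would extract the coarse combinatorial data from an $s$-admissible subset. Since $\GCD(d,s)=1$, the ambient semigroup $\Gamma=\langle nd,\,md,\,md+s\rangle$ has a natural two-scale structure: the quotient by the subsemigroup generated by the large generator $nd$ carries a $(d,s)$-grid structure, and the $s$-admissibility condition from Definition \ref{def: s admissible} translates, on this quotient, into the Dyck-path inequality inside a $d\times s$ rectangle. This produces a canonical map sending an $s$-admissible $\Delta$ to a $(d,s)$-Dyck path $P(\Delta)$, whose vertical runs $v_1(P),\ldots,v_k(P)$ record the multiplicities of the coarse pieces and satisfy $\sum_i v_i(P)=d$.

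Second, I would show that the fiber of this map over a fixed $P$ decomposes as a product indexed by the vertical runs. The key claim is that for each run of length $v_i$, the $(nd)$-generators $a_{j,i}$ attached to it form a self-contained block on which the $s$-admissibility condition degenerates into ordinary admissibility for the smaller auxiliary semigroup $\langle v_i n,\,v_i m,\,v_i m+1\rangle$. Invoking Theorem \ref{thm: unique admissible} for each block identifies its admissible data with a Dyck path $D_i$ of the appropriate size, and the tuple $(P;D_1,\ldots,D_k)$ is precisely a cabled Dyck path. The inverse map is obtained by reversing this construction: given the cabled data one reads off the $(nd)$-generators of a unique $s$-admissible $\Delta$.

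The main obstacle, which I expect to occupy most of the proof, is establishing the decoupling in the second stage: one must show that the $s$-admissibility constraints linking different blocks are automatically satisfied once the coarse path $P$ is fixed, so that the blocks can be analyzed independently. I anticipate this will require a careful generator-by-generator argument in the spirit of Proposition \ref{prop:adm-tria}, likely organized as a recursion on the number of vertical runs of $P$ which reduces the general case to the single-run case already covered by the generic theorem. Once the decoupling is in place, both directions of the bijection follow routinely.
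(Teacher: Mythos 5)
Your high-level architecture (read off a $(d,s)$-pattern from the coarse data, then decompose the fiber into blocks indexed by the vertical runs and apply Theorem~\ref{thm: unique admissible} to each block) matches the shape of the paper's proof. However, there is a genuine gap in the mechanism: the step you anticipate as the ``main obstacle,'' the decoupling argument, is where the proposal would actually get stuck, and the paper sidesteps it entirely with a reindexing trick that you do not mention.

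The missing idea is the ``unfolding'' of $\Delta$ to
$$
\overline{\Delta}=\bigcup_{j=0}^{d-1}\overline{\Delta}_j,\qquad
\overline{\Delta}_j=\Delta_{sj}-(s-1)j,
$$
which exploits $\GCD(d,s)=1$ to permute the residue classes modulo $d$ and simultaneously renormalizes by a shift. Lemma~\ref{lem:susp} then shows that $s$-admissibility of $\Delta$ is literally equivalent to ``$1$ is not $j$-suspicious for $\overline{\Delta}$ for $0\le j\le d-2$,'' which is the ordinary admissibility condition on each consecutive pair. Once you are in this normal form, the $(d,s)$-pattern $P$ is \emph{not} read off from the $s$-admissibility condition (as you propose when you say ``$s$-admissibility translates... into the Dyck-path inequality inside a $d\times s$ rectangle''); rather, it is read off from the minima $\min\overline{\Delta}_j$: the vertical runs $v_i$ are the maximal stretches during which $\min\overline{\Delta}_j$ stays above the previous record low, and the horizontal steps $h_i=q_i-q_{i-1}$ come from $\min\overline{\Delta}_{t_i}=t_i-q_id$. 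The Dyck condition for $P$ is then a short inequality verified from $0$-normalization, not a translation of admissibility. Your proposed ``quotient by the subsemigroup generated by $nd$'' does not produce a $(d,s)$-grid and does not encode $s$-admissibility, so stage one, as stated, does not get off the ground.

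Once the unfolding is in place, the decoupling between blocks is automatic: by Lemma~\ref{lem: mins}, whenever $\min\overline{\Delta}_{t_i}<\min\overline{\Delta}_{t_i-1}$ the index $t_i$ is not suspicious, so the consecutive-pair admissibility constraints never link two different blocks. There is no need for the ``careful generator-by-generator argument in the spirit of Proposition~\ref{prop:adm-tria}'' or a recursion on vertical runs that you anticipate; within each block the collection $\overline{\Delta}_{t_{i-1}},\ldots,\overline{\Delta}_{t_i-1}$ is an admissible collection in the sense of Theorem~\ref{thm: unique admissible} with the parameter $n$ replaced by itself but the number of residue classes equal to $v_i$, producing a $(v_in,v_im)$-Dyck path via Theorem~\ref{thm: GMVDyck}. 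Also note a small but relevant inaccuracy in your formulation: the blocks are governed by the admissibility machinery for the parameter pair $(n,m)$ with $v_i$ residue classes, not by an ``auxiliary semigroup $\langle v_in, v_im, v_im+1\rangle$''; there is no such smaller numerical semigroup in the picture. Without the unfolding and the minima-based definition of $P$, I do not see how your stage two can be completed, so the proposal as written has a real gap.
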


See Definition \ref{def: cabled Dyck} and Theorem \ref{thm: s admissible} for more details. Note that for $s=1$ there is only one $(d,s)$ Dyck path with $v_1(P)=d$, and hence cabled Dyck paths are simply $(dn,dm)$ Dyck paths as above.

On geometric side, we can still stratify the compactified Jacobian by the strata $J_{\Delta}$ parametrized by the $(nd,md)$-invariant subsets $\Delta$. However, the situation for $s>1$ is significantly more complicated than for $s=1$. In general, $J_{\Delta}$ might be nonempty if $\Delta$ is not $s$-admissible, and it might not be isomorphic to an affine space if it is $s$-admissible. Nevertheless, we conjecture the following weaker statement:

\begin{conjecture}
\label{intro conj: chi}
The Euler characteristic of the stratum $J_{\Delta}$ is given by
$$
\chi(J_{\Delta})=\begin{cases}
1 & \text{if}\ \Delta\ \text{is}\ s\text{-admissible},\\
0 & \text{if}\ \Delta\ \text{is not}\  s\text{-admissible}.
\end{cases}
$$
\end{conjecture}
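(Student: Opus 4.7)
The plan is to extend the equation-theoretic approach of Section~\ref{sec: geometry} to arbitrary $s$ and combine it with a torus-action argument. Since the Euler characteristic is additive over the stratification $\overline{JC} = \bigsqcup_\Delta J_\Delta$, it suffices to compute $\chi(J_\Delta)$ for each $(nd,md)$-invariant subset $\Delta$; I would treat $s$-admissible and non-$s$-admissible $\Delta$ separately.

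The first step is to write down defining equations for $J_\Delta$ analogous to those of the generic case, using coordinates indexed by $\Gaps(a_{j,i})$ and relations encoding stability under multiplication by $y = t^{md}+\lambda t^{md+s}+\cdots$. The key new phenomenon for $s>1$ is that each relation receives contributions not only from the $t^{md}$-shift (whose triangular structure was decisive in the generic case) but also from the $\lambda t^{md+s}$-shift. I expect $s$-admissibility (Definition~\ref{def: s admissible}) to be precisely the combinatorial condition guaranteeing that the $t^{md}$-triangularity survives the $\lambda t^{md+s}$-corrections modulo lower-order terms in an appropriate partial order on coordinates.

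For $s$-admissible $\Delta$ I would then construct an iterated affine fibration $J_\Delta = X_0 \to X_1 \to \cdots \to X_N = \mathrm{pt}$ whose fibers are all affine spaces, yielding $\chi(J_\Delta)=1$ by multiplicativity. The order in which coordinates are eliminated should be dictated by the cabled Dyck path description of Theorem~\ref{thm: intro s admissible}: the outer $(d,s)$-Dyck path $P$ controls the coarse sequence of blocks, and within each block of vertical length $v_i(P)$ the corresponding inner Dyck path governs the elimination exactly as in the generic case. For non-$s$-admissible $\Delta$, the failure of triangularity should leave a distinguished coordinate unconstrained, and I would construct a fixed-point-free $\mathbb{G}_m$- or $\mathbb{G}_a$-action on $J_\Delta$ rescaling (respectively translating) that coordinate, yielding $\chi(J_\Delta)=0$.

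The main obstacle is the non-$s$-admissible case. For $s$-admissible $\Delta$, Theorem~\ref{thm: intro s admissible} provides a combinatorial blueprint to match against the geometry, whereas in the non-admissible case one must locate the unconstrained coordinate and construct a free action, most likely via a delicate case analysis of how $s$-admissibility can fail. A secondary difficulty is that $J_\Delta$ is rarely quasi-homogeneous once $s>1$, so the group action has to come from a more subtle source than the naive scaling $t\mapsto ct$; a natural candidate is a weighted action on the family $\{C_\epsilon : \epsilon \in \mathbb{A}^1\}$ with $C_\epsilon=(t^{nd},t^{md}+\epsilon\lambda t^{md+s}+\cdots)$ covering the scaling $\epsilon \mapsto c^s\epsilon$, whose restriction to each stratum would supply the required $\mathbb{G}_m$-symmetry.
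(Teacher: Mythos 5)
The statement you are attempting to prove is left as a \emph{conjecture} in the paper: the authors explicitly state, immediately after introducing $s$-admissibility, that ``the situation for $s>1$ is significantly more complicated than for $s=1$. In general, $J_{\Delta}$ might be nonempty if $\Delta$ is not $s$-admissible, and it might not be isomorphic to an affine space if it is $s$-admissible,'' and they defer a proof of Conjecture~\ref{intro conj: chi} to future work. So there is no proof in the paper to compare against, but the surrounding discussion already shows that your strategy cannot work as stated.

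The decisive problem is your treatment of the $s$-admissible case. You propose to realize $J_\Delta$ as an iterated affine fibration $X_0 \to X_1 \to \cdots \to \mathrm{pt}$, which would in particular imply that $J_\Delta$ is a smooth irreducible variety (indeed an affine space). But the paper's remark following Conjecture~\ref{conj: chi}, citing the computations of Cherednik--Philipp for the curve $(t^6,t^9+t^{13})$, gives $s$-admissible strata isomorphic to $\mathbb{C}^N \cup_{\mathbb{C}^{N-1}} \mathbb{C}^N$, a reducible variety with two irreducible components glued along a codimension-one affine subspace. This has Euler characteristic $1+1-1=1$, as the conjecture requires, but it is not an affine fibration over a point: the triangularity argument of Proposition~\ref{prop:adm-tria} genuinely breaks when $s>1$, and the ``correction terms'' from the $\lambda t^{md+s}$-shift produce nonlinear equations that cannot always be solved for a leading variable. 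Any correct proof in the $s$-admissible case must account for such non-affine strata, so your fibration argument is not merely incomplete but structurally wrong.

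For the non-$s$-admissible case, the idea of finding a fixed-point-free $\mathbb{G}_m$-action is more promising in spirit (the known counterexample strata are of the form $\mathbb{C}^*\times\mathbb{C}^N$, which do admit such actions), but your concrete candidate does not work. The weighted scaling $t\mapsto ct$ carries the curve $C_\epsilon = (t^{nd}, t^{md}+\epsilon\lambda t^{md+s}+\cdots)$ to a curve in the class of $C_{c^s\epsilon}$; it is an isomorphism between different members of the family, not an automorphism of a fixed curve. To turn it into a $\mathbb{G}_m$-action on $J_\Delta(C_\epsilon)$ for a single $\epsilon\neq 0$ you would have to compose with a chosen isomorphism $C_{c^s\epsilon}\cong C_\epsilon$, and there is no natural choice of such isomorphisms that is functorial in $c$; moreover, any $\mathbb{G}_m$-action on $\overline{JC}$ obtained this way would fix the monomial submodule in each stratum, so it cannot be fixed-point-free on all of $J_\Delta$. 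The actual content of the conjecture is precisely to find the right mechanism that replaces these naive arguments, and the paper leaves that open.
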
 

All existing computations by Piontkowski \cite{Piont} and Cherednik-Philipp \cite{CheP} are compatible with this conjecture. In particular, in examples one can find the strata $J_{\Delta}$ isomorphic to $\C^*\times \C^N$ (with Euler characteristic 0) and $\C^N\cup_{\C^{N-1}}\C^{N}$ (with Euler characteristic 1). Conjecture \ref{intro conj: chi} and Theorem \ref{thm: intro s admissible} immediately imply the following.

\begin{corollary}
\label{intro cor: chi}
Assuming Conjecture \ref{intro conj: chi}, the Euler characteristic of the compactified Jacobian equals the number of $s$-admissible subsets, and the number of cabled Dyck paths with parameters $(m,n),(d,s)$.
\end{corollary}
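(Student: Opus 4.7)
The plan is to combine the stratification of $\overline{JC}$ by semigroup-module types with the additivity of Euler characteristic on constructible partitions, and then invoke Theorem \ref{thm: intro s admissible} to rewrite the count in terms of cabled Dyck paths. First I would verify that the strata $J_{\Delta}$, indexed by $(nd,md)$-invariant subsets $\Delta\subset \Z_{\ge 0}$ of the appropriate normalization, form a finite constructible partition of $\overline{JC}$. Only finitely many $\Delta$ occur because each $M$ has fixed colength $\delta$ and hence $\Delta_M$ lies in a finite set of $\Gamma$-submodules of $\Z_{\ge 0}$, and each $J_{\Delta}$ is cut out inside the affine-cell paving of the affine Grassmannian by locally closed conditions on valuations, hence constructible.

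Given this, additivity of Euler characteristic yields
\[
\chi(\overline{JC}) \;=\; \sum_{\Delta} \chi(J_{\Delta}),
\]
where the sum runs over all $(nd,md)$-invariant $\Delta$. Assuming Conjecture \ref{intro conj: chi}, each summand equals $1$ when $\Delta$ is $s$-admissible and $0$ otherwise, so the right-hand side collapses to the number of $s$-admissible subsets. This establishes the first equality claimed in the corollary.

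For the second equality I would apply Theorem \ref{thm: intro s admissible}, which provides an explicit bijection between $s$-admissible subsets and cabled Dyck paths with parameters $(m,n),(d,s)$. Combining the two steps gives
\[
\chi(\overline{JC}) \;=\; \#\{s\text{-admissible }\Delta\} \;=\; \#\{\text{cabled Dyck paths with parameters }(m,n),(d,s)\},
\]
which is the statement of the corollary.

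The only nontrivial point in this argument is checking constructibility and finiteness of the stratification, which I would handle by noting that the valuation function $M\mapsto \Delta_M$ is upper semicontinuous in the standard sense on the affine Grassmannian, so the level sets are locally closed. All remaining ingredients are direct appeals to Conjecture \ref{intro conj: chi} and Theorem \ref{thm: intro s admissible}; the main substance of this corollary is not in its derivation but in those two inputs.
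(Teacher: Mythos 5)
Your proposal is correct and follows exactly the route the paper intends (the paper states the corollary is ``immediate'' from Conjecture \ref{intro conj: chi} and Theorem \ref{thm: intro s admissible}). You simply spell out the two standard ingredients — finiteness/constructibility of the stratification by combinatorial type $\Delta$ and motivic additivity of the Euler characteristic — that the paper leaves implicit.
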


In \cite{KivTsai} Kivinen and Tsai  proved unconditionally that $\chi(\overline{JC})$ equals the number of cabled Dyck paths.

We plan to study the geometry of strata $J_{\Delta}$ in more details and generalize the above results to singularities with more than two Puiseux pairs in the future work. Another interesting future direction is understanding the Euler characteristics of Hilbert schemes of points on singular plane curves by similar methods, which would yield a new approach to the main conjecture of \cite{OS}. Finally, \cite{CheP} introduced a ``flagged" generalization of compactified Jacobians, and it would be interesting to see if these spaces associated to generic curves admit cell decompositions.





\section*{Acknowledgments}

We thank Francois Bergeron, Oscar Kivinen, Anton Mellit and Monica Vazirani for useful discussions. E. G. was partially supported by the NSF grant DMS-1760329, and by the NSF grant DMS-1928930 while E. G. was in residence at the Simons Laufer Mathematical Sciences Institute (previously known as MSRI) in Berkeley, California, during the Fall 2022 semester. M. M. was partially supported by the Simons Collaboration grant 524324. M. M. is also grateful to MSRI for hospitality during the Winter-Spring 2022, where the project started. A.O. was supported by the NSF grant DMS-1760373. 

\section{Background}
\label{sec: background}

\subsection{Compactified Jacobians and semigroups}
\label{sec: background Jacobian}

Let $C$ be an irreducible (and reduced) plane curve singularity at $(0,0)$. We can parametrize $C$ as $(x(t),y(t))$, and write the local ring of functions on $C$ as 
$\cO_C=\C[[x(t),y(t)]]\subset \C((t)).$ Given a function $f(t)\in \C((t))$, we can write 
$$
f(t)=\alpha_k t^{k}+\alpha_{k+1}t^{k+1}+\ldots,\ \alpha_k\neq 0
$$
and define the order of $f(t)$ by $\Ord f(t)=k$. It is well known that $\Ord$ is a valuation on $\cO_C$, that is
$$
\Ord(f\cdot g)=\Ord(f)+\Ord(g),\ \Ord(f+g)\ge \min(\Ord(f),\Ord(g)).
$$
The compactified Jacobian $\overline{JC}$ is defined as the moduli space of rank one  torsion free sheaves on $C$ or, equivalently, $\cO_C$-submodules $M\subset t^{-N}\C[[t]]$ (for sufficiently large $N$) such that 
\begin{equation}
\label{eq: M balance}
\dim \frac{t^{-N}\C[[t]]}{M}=\dim \frac{t^{-N}\C[[t]]}{\cO_C}.
\end{equation}
Note that $M$ is an $\cO_C$--submodule if and only if  $x(t)M\subset M,\ y(t)M\subset M$. Given such a subspace $M$, we define
$$
\Delta_M=\{\Ord\ f(t)\ :\ f(t)\in M\}\subset \Z.
$$
In particular, for $M=\cO_C$ we obtain the {\bf semigroup} of $C$:
$$
\Gamma=\Delta_{\cO_C}=\{\Ord\ f(t)\ :\ f(t)\in \cO_C\}.
$$
If $M$ is an $\cO_C$-submodule then $\Delta_M$ is a $\Gamma$-module: $\Delta_M+\Gamma\subset \Delta_M$. 
This motivates the following

\begin{definition}\label{def:Jac}
Given a subset $\Delta\subset \Z$, we define the stratum in the compactified Jacobian:
$$
J_{\Delta}:=\{M\subset \C((t)): \cO_CM\subset M,\ \Delta_M=\Delta\}.
$$ 
\end{definition}

\begin{definition}
A subset $\Delta\subset \Z$ is called balanced if $\Delta\subset [-N,+\infty)$ and $|[-N,+\infty)\setminus \Delta|=|[-N,+\infty)\setminus \Gamma|$ for sufficiently large $N$, and $0$-normalized if $\min \Delta=0$. 
\end{definition}

By definition, the subsets $\Delta_M$ corresponding to $M$ satisfying \eqref{eq: M balance} are balanced, but for many technical reasons it is easier to work with $0$-normalized subsets which correspond to $M\subset \C[[t]]$. The two classes of subsets are related by a shift, which corresponds to the multiplication of $M$ by a power of $t$. Clearly, the shift does not change $J_{\Delta}$ up to isomorphism, so in the rest of the paper we will abuse the notations and talk about the $J_{\Delta}$ for $0$-normalized subsets $\Delta$ instead.

Clearly, $J_{\Delta}$ give a stratification of $\overline{JC}$ and  $J_{\Delta}$ is empty if $\Delta$ is not $\Gamma$-invariant.  
As a warning to the reader, $J_{\Delta}$ could be empty even if $\Delta$ is $\Gamma$-invariant, as shown in the following example:

\begin{example}
\label{ex: admissible 4 6 7}
Consider the singularity $(x(t),y(t))=(t^4,t^6+t^7)$. It is easy to see that $y^2(t)-x^3(t)=2t^{13}+t^{14}$ has order $13$, and one can check that the semigroup $\Gamma$ in this case is generated by $4,6$ and $13$. The subset 
$$
\Delta=\{0,2,4,6,8,10,12,13,14,\ldots\}
$$
is clearly $\Gamma$-invariant. Suppose that 
$M\subset J_{\Delta}$, and $f_0=1+at+\ldots$ and $f_2=t^2+bt^3+\ldots$ are two elements of $M$ of orders $0$ and $2$ respectively. Then
$$
y(t)f_0-x(t)f_2=(t^6+t^7)(1+at+\ldots)-t^4(t^2+bt^3+\ldots)=(a+1-b)t^7+\ldots,
$$ 
$$
x^2(t)f_0-y(t)f_2=t^8(1+at+\ldots)-(t^6+t^7)(t^2+bt^3+\ldots)=(a-b-1)t^9+\ldots
$$
Observe that $a+1-b$ and $a-b-1$ cannot vanish at the same time, so either $M$ contains an element of order $7$ or an element of order 9, contradiction. Therefore the corresponding stratum $J_{\Delta}$ is empty.   
\end{example}

We will see  that a necessary condition for $J_{\Delta}$ being nonempty is that $\Delta$ satisfies the so-called admissibility property, which we define below.

\begin{remark} To replace base field \(\mathbb{C}\) by arbitrary field \(k\) we need a parametrization \(x(t),y(t)\)  of the germ of the curve singularity to be
    defined over  \(k\).
  \end{remark}
\subsection{Generic curves}

 It is well known that any plane curve singularity $C$ can be parametrized using Puiseux expansion:
$$
x=t^{nd},\ y=t^{md}+\lambda t^{md+1}+\ldots,\ \GCD(m,n)=1.
$$
It is convenient to assume $n<m$, but sometimes we will not need this assumption. If $d=1$ then $C$ has one Puiseux pair $(n,m)$ and its link is the $(n,m)$ torus knot. In this paper, we will be mostly interested in the case $d>1$.

\begin{definition}
Assume $d>1$. A plane curve singularity $C$ is called {\em generic} (or generic curve in short) if $\lambda \neq 0$. 
\end{definition}

It is easy to see that the definition of a generic curve is symmetric in $n$ and $m$. Indeed, we can choose the new parameter 
$$
\widetilde{t}=\sqrt[md]{y(t)}=t\sqrt[md]{1+\lambda t^{md+1}+\ldots}=t\left(1+\frac{\lambda}{md}t+\ldots\right),
$$
then 
$$
y=\widetilde{t}^{md},\ x=\widetilde{t}^{nd}-\frac{n\lambda }{md}\widetilde{t}^{nd+1}+\ldots.
$$
For a generic plane curve singularity, the difference $y^{n}-x^{m}$ has order $nmd+1$, and one can check that the semigroup $\Gamma$ is generated by $nd,md$ and $nmd+1$.

If $n=1$, then the singularity has one Puiseux pair $(d,md+1)$. Otherwise, it has two Puiseux pairs $(nd,md)$ and $(d,md+1)$, which completely determine the topological type of the corresponding knot as a $(d,mnd+1)$-cable of the $(n,m)$ torus knot.

\begin{remark}The  above argument requires the base field to be algebraically closed of characteristic \(0\). However, the symmetry is not used in the rest of the paper. 
  \end{remark}





\subsection{Invariant subsets}
\label{sec: comb gens}





A subset $\Delta\subset \Z_{\ge 0}$ is called $0$-normalized if $0\in \Delta$. We will mostly consider $0$-normalized subsets, as any subset of $\Z_{\ge 0}$ can be shifted to a unique $0$-normalized one. We call $\Delta$ cofinite if $\Z_{\ge 0}\setminus \Delta$ is finite. For a cofinite subset $\Delta$ and $x\ge 0$ we write 
$$
\Gaps(x)=\Gaps_{\Delta}(x):=[x,+\infty)\setminus \Delta.
$$

\begin{definition}
\label{def: generators}
We call $\Delta$ an $(nd)$-invariant subset if $nd+\Delta\subset \Delta$. A number $a$ is called an $(nd)$-generator of $\Delta$ if $a\in \Delta$ but $a-nd\notin \Delta$. 
\end{definition}

It is clear that for a cofinite $(nd)$-invariant subset $\Delta$ there is exactly one $(nd)$-generator in each remainder modulo $nd$. We will group them according to their remainders modulo $d$, so that the generators $a_{j,i},\ i=0,\ldots,n-1$ all have remainder $j$ modulo $d$ and different remainders modulo $nd$. 
We write 
$$
\Delta_{j}=\bigcup_{i=0,\ldots,n-1} (a_{j,i}+dn\mathbb{Z}_{\ge 0}),\ \Delta=\bigcup_{j=0,\ldots, d} \Delta_j.
$$

\begin{lemma}
\label{lem: comb syzygy}
Suppose that $\Delta$ is a cofinite $(nd)$-invariant subset with $(nd)$-generators $a_{j,i}$ as above. Then $\Delta$ is $(md)$-invariant if and only if one can cyclically order the generators $a_{j,i}$ for each $j$ such that  
\begin{equation}
\label{eq: comb syzygy}
a_{j,i}+md=a_{j,i+1}+\alpha_{j,i} nd,\quad \alpha_{j,i}\ge 0
\end{equation}
\end{lemma}

\begin{proof}
Since $\Delta$ is $(nd)$-invariant, it is $(md)$-invariant if and only if $a_{j,i}+md\in \Delta$. Since $a_{j,i}+md\equiv j\mod d$, this is equivalent to existence of a generator $a_{j,i'}\equiv a_{j,i}+md\mod nd$ such that $a_{j,i'}\le a_{j,i}+md$.

Observe that $(a_{j,i'}-j)/d \equiv (a_{j,i}-j)/d+m\mod n$. Since $m$ and $n$ are coprime, for a fixed $j$ the permutation $a_{j,i}\to a_{j,i'}$ is a single cycle, and the result follows.
\end{proof}
 
For a cofinite $(nd,md)$-invariant subset $\Delta,$ we will always assume that for each $j$ the generators $a_{j,i}$ are cyclically ordered as above. Furthermore, we will consider the second index $i$ modulo $n,$ so that $a_{j,i}=a_{j,i+n},$ and we will consider the first index $j$ modulo $m,$ so that $a_{j,i}=a_{j+m,i}.$ 

We will call the integers $a_{j,i}+md$  the combinatorial syzygys of $\Delta$. It is clear that  in each remainder modulo $d$ there are $n$ such syzygys, and the equations  \eqref{eq: comb syzygy} form an $n$-cycle.

\begin{example}
\label{ex: admissible 4 6 7 combi}
In Example \ref{ex: admissible 4 6 7} we have $d=2,n=2$ and $m=3$. The subset 
$$
\Delta=\{0,2,4,6,8,10,12,13,14,\ldots \}
$$
contains all integers starting from 12, so it is cofinite.  The $4$-generators are $0,2,13,15$ and the combinatorial syzygys
$$
0+6=2+4,\ 2+6=0+2\cdot 4,\ 13+6=15+4,\ 15+6=13+2\cdot 4.
$$
In the notations of Lemma \ref{lem: comb syzygy}, we can write
$$
a_{0,0}=0,\ a_{0,1}=2,\ a_{1,0}=13,\ a_{1,1}=15
$$
and
$$
\alpha_{0,0}=1,\ \alpha_{0,1}=2,\ \alpha_{1,0}=1,\ \alpha_{1,1}=2.
$$
\end{example}

\begin{example}
\label{ex: strata combi}
Suppose that $(nd,md)=(6,9)$ so that $d=3,n=2$ and $m=3$. The subset 
$$
\Delta=\{0,3,6,7,9,10,12,13,15,16,17,18,19,20,\ldots\}
$$
contains all integers starting from 15, so it is cofinite. It is not hard to see that it is $(6,9)$ invariant and its $6$-generators are $0,3,7,10,17,20$. The combinatorial syzygys are:
$$
0+9=3+6,\ 3+9=0+2\cdot 6,\ 7+9=10+6,\ 
$$
$$
10+9=7+2\cdot 6,\ 17+9=20+6,\ 20+9=17+2\cdot 6.
$$
In the notations of Lemma \ref{lem: comb syzygy}, we can write
$$
a_{0,0}=0,\ a_{0,1}=3,\ a_{1,0}=7,\ a_{1,1}=10,\ a_{2,0}=17,\ a_{2,1}=20.
$$
$$
\alpha_{0,0}=1,\ \alpha_{0,1}=2,\ \alpha_{1,0}=1,\ \alpha_{1,1}=2,\ \alpha_{2,0}=1,\ \alpha_{2,1}=2.
$$
Note that the first index corresponds to the remainder of the generators modulo $d=3$.
\end{example}

\begin{lemma}
\label{lem: gaps}
Assume that $\Delta$ is a cofinite $(nd,md)$-invariant subset. 
Suppose that $a_{j,i}+md=a_{j,i+1}+\alpha_{j,i} nd$ is a combinatorial syzygy, and $a_{j,i}+md+x$ is a gap in $\Delta$. Then both
$a_{j,i}+x$ and $a_{j,i+1}+x$ are gaps as well. 
\end{lemma}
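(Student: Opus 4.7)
The plan is to argue by contradiction, exploiting only the two defining invariance properties of $\Delta$ (closure under adding $nd$ and under adding $md$) together with the syzygy identity $a_{j,i}+md = a_{j,i+1}+\alpha nd$ with $\alpha\ge 0$. Each of the two conclusions turns into a one-line contradiction.

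For the first claim, suppose toward contradiction that $a_{j,i}+x\in\Delta$. Then by $(md)$-invariance,
$$ a_{j,i}+x+md \;=\; (a_{j,i}+md)+x \;\in\; \Delta, $$
contradicting the hypothesis that $a_{j,i}+md+x$ is a gap. For the second claim, suppose $a_{j,i+1}+x\in\Delta$. Iterating $(nd)$-invariance $\alpha$ times (which is legitimate since $\alpha\ge 0$) gives
$$ a_{j,i+1}+x+\alpha nd \;\in\; \Delta, $$
and the syzygy identity rewrites the left-hand side as $a_{j,i}+md+x$, again contradicting the gap hypothesis.

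There is no real obstacle here: the lemma is essentially a bookkeeping restatement of the fact that the syzygy equation expresses $a_{j,i}+md$ as an $(nd)$-shift of $a_{j,i+1}$, so a gap above $a_{j,i}+md$ forces gaps at both of the natural ``pullbacks'' by $md$ and by $\alpha nd$. The only thing to be careful about is to apply the invariance in the correct direction (gaps pull back, they are not pushed forward), which is why it is cleanest to argue by contradiction rather than directly.
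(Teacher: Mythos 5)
Your argument is correct and is essentially identical to the paper's own proof: both use $(md)$-invariance on $a_{j,i}+x$ and $\alpha$-fold $(nd)$-invariance on $a_{j,i+1}+x$ together with the syzygy identity to contradict the gap hypothesis. The only difference is presentational (you split the two contradictions; the paper states them together).
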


\begin{proof}
If $a_{j,i}+x\in \Delta$ then $a_{j,i}+x+md\in \Delta$. If $a_{j,i+1}+x\in \Delta$ then $a_{j,i+1}+x+\alpha_{j,i}nd\in \Delta$ and $a_{j,i+1}+x+\alpha_{j,i} nd=a_{j,i}+x+md$. Contradiction. 
\end{proof}

\section{Topology of compactified Jacobians}
\label{sec: geometry}

Throughout this section we fix a generic plane curve singularity $C$ with parametrization $$(x(t),y(t))=(t^{nd},t^{md}+\lambda t^{md+1}+\ldots)$$ with $\lambda\neq 0$. We will use the notation $\cO_C=\C[[x(t),y(t)]]$ for the ring of functions on $C$.

\begin{remark}
The existence of such parametrization over arbitrary field is not automatic. However, if we assume this specific parametrization for $C$, all the arguments in this section are valid over any field of characteristic larger than $n$ (see Remarks \ref{rem: admissible any field} and \ref{rem: basis any field} for more details). 
  \end{remark}

We also fix a cofinite $(nd,md)$-invariant subset $\Delta\subset \Z_{\ge 0}$ with $(nd)$-generators $a_{j,i}$ as in Section \ref{sec: comb gens}.  We denote by $A=\{a_{j,i}\}$ the set of all $(nd)$-generators and follow  the notations \eqref{eq: comb syzygy} (see Lemma \ref{lem: comb syzygy}) in ordering them.

The main goal of this section is to describe the stratum $J_{\Delta}$ in the compactified Jacobian $\overline{JC}$.

\subsection{Equations for $J_\Delta$}
\label{sec:equations-j_delta}

Consider an $\cO_C$-module $M\in  J_{\Delta}.$ The following lemma is standard but we include the proof for the reader's convenience.

\begin{lemma}
\label{lem: canonical}
For all $k\in\Delta$ there exist a unique canonical representative
\begin{equation*}
f_k=t^k+\sum_{l\in\Gaps(k)} f_{k;l-k} t^l \in M.
\end{equation*}
The canonical generators form a topological basis in $M$.
\end{lemma}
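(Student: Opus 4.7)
The plan is to construct each $f_k$ by a Gram--Schmidt-style elimination of the forbidden coefficients, and then to deduce both uniqueness and the topological basis statement from the defining property $\Delta_M = \Delta$. A background fact I would invoke is that $M$ is $t$-adically closed in $\C[[t]]$: as a finitely generated $\cO_C$-module (equivalently, a fractional ideal in $\C[[t]]$) it contains $t^N\C[[t]]$ for some $N$, so the induced topology on $M$ is complete. Enumerate $\Delta \cap (k,\infty) = \{k_1 < k_2 < \cdots\}$ and, for each $k_i$, pick an auxiliary element $h_i \in M$ of order $k_i$ with leading coefficient one. Starting from any $g_0 \in M$ of order $k$ normalized so its leading coefficient is one, iteratively set $g_i := g_{i-1} - c_i h_i$, where $c_i$ is the coefficient of $t^{k_i}$ in $g_{i-1}$. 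At every step, the coefficients of $g_i$ at the positions $k, k_1, \dots, k_i$ are forced to $1, 0, \dots, 0$, while only coefficients at positions $\ge k_{i+1}$ can still change. Since $k_i \to \infty$ the sequence $(g_i)$ is $t$-adically Cauchy in $M$, so by closedness its limit lies in $M$; this limit has leading term $t^k$ and nonzero coefficients only at positions in $\Gaps(k)$, so it is the desired $f_k$.

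For uniqueness, if $f_k$ and $f_k'$ were two canonical representatives, then $f_k - f_k' \in M$ would have zero coefficients at $k$ and at every position in $\Delta \cap (k,\infty)$; if nonzero, its order would therefore lie in $\Gaps(k) \subset \Z_{\ge 0} \setminus \Delta$, contradicting $\Delta_M = \Delta$. The topological basis statement is handled by the same iterative scheme: given $g \in M$, let $k_0 = \Ord(g) \in \Delta$ and let $c_{k_0}$ be its leading coefficient; replace $g$ by $g - c_{k_0} f_{k_0}$, whose order is strictly larger and still in $\Delta$, and iterate. The successive orders increase to infinity, so the resulting series $\sum_{k \in \Delta} c_k f_k$ converges $t$-adically to $g$; linear independence of the $f_k$ is the same "smallest nonzero coefficient" contradiction as above.

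The main obstacle I anticipate is not the combinatorial bookkeeping but the convergence step: one must know that iterated subtraction of elements whose orders tend to infinity lands inside $M$ itself. This is exactly where the closedness of $M$ in the $t$-adic topology (equivalently, completeness of $M$ as a finitely generated module over the complete local ring $\cO_C$) is essential; once this is recorded, the rest of the argument is just careful indexing of which coefficients have been killed at which stage.
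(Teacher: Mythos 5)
Your proposal is correct and follows essentially the same iterative elimination scheme the paper uses for existence (successively subtracting off multiples of auxiliary elements to kill the coefficients at positions of $\Delta$ above $k$, then invoking $t$-adic closedness of $M$ to pass to the limit) and the same "the difference would have order lying in a gap" argument for uniqueness. You also supply a short justification for the closedness of $M$ and for the topological basis claim, both of which the paper states without proof; this is reasonable added detail rather than a different route.
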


\begin{proof}
Let us prove the existence first. For all $b\in \Delta$ we can choose some element $\widetilde{f}_b\in M$ of order $b$. Suppose that $\widetilde{f}_k=\sum_{i=0}^{\infty} \beta_i t^{k+i}$ and $\beta_0\neq 0$. By dividing by $\beta_0$, we can assume that $\beta_0=1$. If $\beta_i=0$ whenever $i>0$ and $k+i\in \Delta$, we are done since $\widetilde{f}_k$ is canonical. Otherwise we consider minimal $i>0$ such that $\beta_i\neq 0$ and $k+i\in \Delta$, and replace $\widetilde{f}_k$ by $\widetilde{f}_k-\beta_i\widetilde{f}_{a+i}$. By repeating this process, we construct a sequence of elements of $M$ which converges to a canonical element of order $k$ in $t$-adic topology. Since $M$ is closed in $t$-adic topology, we are done.

Now let us prove the uniqueness. Suppose that $f_k=t^k+\sum_{y\ge 0,k+y\notin \Delta}\alpha_{y}t^{k+y}$ and $f'_k=t^k+\sum_{y\ge 0,k+y\notin \Delta}\alpha'_{y}t^{k+y}$ are both contained in $M$. If $\alpha_{y}\neq \alpha'_{y}$ for some $y$, we get that $f_k-f'_k\neq 0$  and the order of $f_k-f'_k$ is a gap in $\Delta$, contradiction.
\end{proof}

Note that the representatives $\{f_a| a\in A\}$ generate the submodule $M$ over $\cO_C$. Here  $A=\{a_{j,i}\}$ the set of all $(nd)$-generators of $\Delta$ as above, and $f_a$ are defined as in Lemma \ref{lem: canonical}.
Vice versa, consider a collection of polynomials
\begin{equation}
\label{eq: generators}
\left\{g_a=t^a+\sum_{l\in\Gaps(a)} g_{a;l-a} t^l| a\in A\right\}.
\end{equation}
It will be convenient to consider the coefficients $g_{a;l-a}$ for $l\in\Gaps(a)$ as parameters. For $l\neq\Gaps(a)$ we use conventions
\[
g_{a;0}=1,\quad g_{a;l-a}=0 \mbox{ if } l\in \Delta\setminus\{a\}.
\]
Let $N$ be the $\cO_C$-submodule generated by the collection \eqref{eq: generators}, and let $\widetilde{N}$ be the $\C[[t^{dn}]]$ submodule generated by the same collection. Note that  $\widetilde{N}$ has a topological basis
\begin{equation}
\label{eq: basis N tilde}
\{g_{a}t^{\alpha nd}:\ a\in A,\ \alpha\ge 0\},\ \Ord(g_{a}t^{\alpha nd})=a+\alpha nd.
\end{equation}
Note that by definition of $(nd)$-generators (Definition \ref{def: generators}) the orders of polynomials in the basis \ref{eq: basis N tilde} are  pairwise distinct and the set of such orders coincides with $\Delta$.

Then $N\in J_{\Delta}$ if and only if $N=\widetilde{N}$ or, equivalently,
\begin{equation*}
y(t)g_a \in\widetilde{N}\ \forall\ a\in A.
\end{equation*}
For every $a\in A$ let $s_{a+dm}$ be a polynomial in $t$ whose coefficients are polynomials in \(g_{a';r}\), \(a'\in A\), \(r\in \mathbb{Z}_{>0}\)  such that
\begin{equation}
\label{eq: relations}
y(t)g_a-s_{a+dm}\in \widetilde{N}\quad \mathrm{and}\quad
  s_{a+dm}=\sum_{l\in\Gaps(a+dm)} s_{a+dm;l-dm-a} t^l.
\end{equation}
Such $s_{a+dm}$ is unique: given two distinct choices $s_{a+dm},s'_{a+dm}$ the difference $s_{a+dm}-s'_{a+dm}$ is contained in $\widetilde{N}$, but its order is a gap of $\Delta$ which is not possible.

Concretely, the polynomials $s_{a+dm}$ can be defined by reducing $y(t)g_a$ modulo $\widetilde{N}$ using the basis \eqref{eq: basis N tilde}, similarly to Lemma \ref{lem: canonical}.
For a recursive construction of \(s_{a+dm}\) see formulas \eqref{eq:s0} and \eqref{equation:iteration}. 
The condition $y(t)g_a\in\widetilde{N}$ is equivalent to $s_{a+dm}=0$ for all \(a\in A\).

We can summarize the above discussion as follows:
\begin{proposition}
The stratum \(J_\Delta\) is isomorphic to a subset of the affine space \(\Gen=\C^{G(\Delta)}\) defined by \(E(\Delta)\) equations:
\begin{equation}\label{eq:GE}
  G(\Delta)=\sum_{i,j}|\Gaps(a_{j,i})|,\quad E(\Delta)=\sum_{i,j}|\Gaps(a_{j,i}+dm)|.\end{equation}
In particular, the natural coordinates on \(\Gen\) are given by coefficients \(g_{a_{j,i};x}\) for \(a_{i,j}+x\in \Gaps(a_{j,i})\)  in generators \eqref{eq: generators}, there is a tuple of  coordinates for each combinatorial generator $a_{i,j}$ of $\Delta$. 
The equations are given by 
\[s_{a_{j,i}+dm;x}(g)=0, \quad a_{j,i}+dm+x\notin\Delta.\]
There is a tuple of  relations for each combinatorial syzygy of $\Delta$ as in Lemma \ref{lem: comb syzygy}.
\end{proposition} 
\begin{example}
We can revisit Example \ref{ex: admissible 4 6 7} and Example \ref{ex: admissible 4 6 7 combi} with new notations. We have
$g_0=1+at+\ldots$ and $g_2=t^2+bt^3+\ldots$, so in particular $g_{0;1}=a$ and $g_{2;1}=b$. Using the computations in Example \ref{ex: admissible 4 6 7}, we observe that 
$$
s_{6;1}=a+1-b,\ s_{8;1}=b+1-a.
$$
In total, there are $\Gaps(0)+\Gaps(2)+\Gaps(13)+\Gaps(15)=6+5+0+0=11$ coordinates and $\Gaps(6)+\Gaps(8)+\Gaps(19)+\Gaps(21)=3+2+0+0=5$ equations, but 
the equations $s_{6;1}=s_{8;1}=0$ are already contradictory.
\end{example}

\begin{example}
\label{ex: strata upd}
As in Example \ref{ex: strata combi}, consider $(nd,md)=(6,9)$ and the module
$$
\Delta=\{0,3,6,7,9,10,12,13,15,16,17,18,19,20,\ldots\}
$$
with $6$-generators $0,3,7,10,17,20$. We have 
$$
g_0=1+a_1t+a_2t^2+\ldots,\ g_3=t^3+b_1t^4+b_2t^5,\ g_7=t^7+c_1t^8+\ldots,\ g_{10}=t^{10}+d_1t^{11}+\ldots
$$
Here $a_i=g_{0;i},b_i=g_{3;i},c_i=g_{7;i}$ and $d_i=g_{10;i}$ in the above notations.
Note that $g_{17}=t^{17}$ and $g_{20}=t^{20}$ since $\Gaps(17)=\Gaps(20)=0$.
The only interesting syzygys are of degrees $9$ and $12$:
$$
(t^9+\lambda t^{10})g_0-t^6g_3-(a_1+\lambda-b_1)g_{10}=(a_2+\lambda a_1-b_2-(a_1+\lambda-b_1)d_1)t^{11}+\ldots
$$
and
$$
(t^9+\lambda t^{10})g_3-t^{12}g_0-(b_1+\lambda-a_1)t^6g_{7}=(b_2+\lambda b_1-a_2-(b_1+\lambda-a_1)c_1)t^{14}+\ldots
$$
so we get degree 2 equations
$$
s_{9;2}=a_2+\lambda a_1-b_2-(a_1+\lambda-b_1)d_1=0,\ s_{12;2}=b_2+\lambda b_1-a_2-(b_1+\lambda-a_1)c_1=0.
$$
Note that the linear parts of these equations are $a_2-b_2$ and $b_2-a_2$ respectively, which are linearly dependent. If we add the equations, these cancel out and we get
$$
\lambda (a_1+b_1)-(a_1+\lambda-b_1)d_1-(b_1+\lambda-a_1)c_1=0.
$$
For $\lambda=0$, we get a nonlinear equation $(a_1-b_1)(c_1-d_1)=0$. For $\lambda\neq 0$, however, we can solve the equations as follows: first, fix arbitrary $b_1-a_1=\theta$, then
$$
a_1+b_1=\frac{1}{\lambda}((\theta+\lambda)d_1-(\lambda-\theta)c_1)
$$
and this determines $a_1$ and $b_1$ uniquely. Given $a_1$ and $b_1$, we can determine $a_2-b_2$ as well.
There is one more equation $s_{9;5}=0$ of degree 5, which is easy to solve. Overall, we get an affine space of dimension
$$
\Gaps(0)+\Gaps(3)+\Gaps(7)+\Gaps(10)-\Gaps(9)-\Gaps(12)=7+5+3+2-2-1=14
$$
as predicted by \cite[Section 4.4]{CheP}.
\end{example}

\subsection{Admissibility}
\label{sec:comb-ag}
In this subsection we discuss an admissibility condition on \(\Delta\) generalizing Example \ref{ex: admissible 4 6 7}.  We show that
if \(\Delta\) is not admissible then \(J_\Delta=\emptyset\). In subsequent section we
describe \(J_\Delta\) for admissible \(\Delta\)'s.

We will consider $(md,nd)$-invariant subsets, and say that such a subset $B$ is supported in a remainder $r$ if all elements of $B$ are congruent to $r\mod d$.

\begin{lemma}
\label{lem: AB}
Suppose that $B,C$ are $(md,nd)$-invariant subsets supported in the same remainder $r\mod d$, and $b_i,c_i$ are their $(nd)$-generators. Then $B\subset C$ if and only if $c_i-nd\notin B$ for all $i$.
\end{lemma}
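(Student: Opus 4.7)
The plan splits along the two implications. The forward direction, $B\subset C\Rightarrow c_i-nd\notin B$ for all $i$, is immediate: each $c_i$ is an $(nd)$-generator of $C$, so $c_i-nd\notin C$ by definition, and the containment $B\subset C$ forces $c_i-nd\notin B$.

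For the converse, my plan is to match generators of $B$ and $C$ by their residues modulo $nd$ and reduce both sides to the same family of inequalities. Since $B$ and $C$ are $(nd)$-invariant and both supported in the single residue class $r\pmod d$, the residues modulo $nd$ of the generators $\{b_j\}$ and $\{c_i\}$ each hit the $n$ residue classes modulo $nd$ that lie over $r\pmod d$ exactly once. This yields a unique bijection $\sigma$ with $b_{\sigma(i)}\equiv c_i\pmod{nd}$; after relabelling I may as well assume $\sigma=\mathrm{id}$.

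Using the decomposition $B=\bigsqcup_i(b_i+nd\Z_{\ge 0})$ and the $(nd)$-invariance of $C$, the containment $B\subset C$ is equivalent to $b_i\in C$ for every $i$, which within the residue class of $c_i$ modulo $nd$ is precisely $b_i\ge c_i$. On the other side, $c_i-nd$ and $b_i$ lie in the same residue class modulo $nd$, and within that class $B$ equals $\{b_i,b_i+nd,b_i+2nd,\ldots\}$. So $c_i-nd\notin B$ is equivalent to $c_i-nd<b_i$, which, since $b_i-c_i$ is a multiple of $nd$, is the same inequality $b_i\ge c_i$. The edge case $c_i<nd$ (where $c_i-nd<0\notin B$ automatically) is consistent: then $b_i\ge 0\ge c_i-nd+1$ gives $b_i\ge c_i$ automatically as well.

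The two conditions therefore reduce to the same family of $n$ inequalities, one per residue class, finishing the proof. There is no real obstacle here beyond bookkeeping of residues; only $(nd)$-invariance is genuinely used in the argument, with $(md)$-invariance serving, in the larger context of the paper, to guarantee the standard generator structure that the lemma takes for granted.
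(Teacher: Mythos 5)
Your proof is correct and takes essentially the same approach as the paper: match the $(nd)$-generators of $B$ and $C$ by residues modulo $nd$, reduce $B\subset C$ to the inequalities $b_{\sigma(i)}\ge c_i$, and observe that $c_i-nd\notin B$ is equivalent to $c_i-nd<b_{\sigma(i)}$, which is the same inequality since $b_{\sigma(i)}\equiv c_i\pmod{nd}$. The separate forward direction you give is logically redundant (your converse argument already runs both ways), but that is only a stylistic matter.
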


\begin{proof}
There exists a permutation $\sigma$ such that $c_i\equiv b_{\sigma(i)}\mod nd$ for all $j$. We have 
$$
B\subset C\Leftrightarrow \forall i\ c_i\le b_{\sigma(i)}\Leftrightarrow \forall i\ c_i-nd<b_{\sigma(i)}\Leftrightarrow \forall i\ c_i-nd\notin B.\qedhere
$$
\end{proof}


\begin{definition}
\label{def: suspicious}
Given an $(nd,md)$-invariant subset $\Delta$ and a remainder $j$ modulo $d$,
we call $x>0$ $j$-suspicious  if $a_{j,i}+md+x\notin \Delta$ for all $i$, and suspicious if it is $j$-suspicious for at least one remainder $j$.
\end{definition}

\begin{example}
In Example \ref{ex: admissible 4 6 7 combi} one can check that $x=1$ is $0$-suspicious since $0+6+1=7$ and $2+6+1=9$ are both not in $\Delta$. In Example \ref{ex: strata combi}  one can check that $x=2$ is $0$-suspicious since $0+9+2=11$ and $3+9+2=14$ are both not in $\Delta$. 
\end{example}
 
Note that in both examples suspicious numbers correspond to dependencies between the linear parts of the equations. More generally, we have the following result.

\begin{lemma}
\label{lem: suspicious dependent}
A number $x$ is $j$-suspicious if and only if the linear parts of the equations $s_{a_{j,i}+md;x}$ are linearly dependent for $i=0,\ldots,n-1$. 
\end{lemma}

Here by linear parts we mean linear terms in $g_{a;x}$ not containing $\lambda$ or higher coefficients of $y(t)$.

\begin{proof}
Recall that by Lemma \ref{lem: comb syzygy} we have $a_{j,i}+md=a_{j,i+1}+\alpha_{j,i}nd$. We can write 
$$
s_{a_{j,i}+md}(t)=y(t)g_{a_{j,i}}(t)-(t^{nd})^{\alpha_{j,i}}g_{a_{j,i+1}}-\ldots
$$
where $\ldots$ denotes certain correction terms with nonlinear coefficients, see  \eqref{eq:s0} and \eqref{equation:iteration} below.
The coefficient at $t^{a_{j,i}+md+x}$ in  $y(t)g_{a_{j,i}}(t)$ equals $g_{a_{j,i};x}+\lambda g_{a_{j,i};x-1}+\ldots$, while the coefficient at $(t^{nd})^{\alpha_{j,i}}g_{a_{j,i+1}}$ equals $g_{a_{j;i+1};x}$. 

Therefore the linear part of $s_{a_{j,i}+md;x}$ equals $g_{a_{j,i};x}-g_{a_{j,i+1};x}$, as long as $a_{j,i}+md+x\notin \Delta$ (otherwise there is no equation), $a_{j,i}+x\notin \Delta$ and $a_{j,i+1}+x\notin \Delta$ (otherwise the corresponding coordinates vanish). Note that by Lemma \ref{lem: gaps} $a_{j,i}+md+x\notin \Delta$ implies that $a_{j,i}+x\notin \Delta$ and $a_{j,i+1}+x\notin \Delta$. 
Now $x$ is $j$-suspicious if and only if all such equations  $s_{a_{j,i}+md;x}$ are present, and their linear parts $g_{a_{j,i};x}-g_{a_{j;i+1};x}$ sum up to zero. Otherwise these equations are linearly independent.
\end{proof}


\begin{lemma}\label{lem:susp}
A number $x$ is $j$-suspicious  if and only if $\Delta_{j+x}\subset \Delta_j+md+nd+x$.
\end{lemma}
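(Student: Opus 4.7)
The plan is to reduce the statement directly to Lemma \ref{lem: AB} applied to $B=\Delta_{j+x}$ and $C=\Delta_j+md+nd+x$. First I would check the hypotheses: both $B$ and $C$ are $(md,nd)$-invariant (for $C$ this is inherited from $\Delta_j$ since we shift by a constant), and both are supported in the same residue $(j+x)\bmod d$, since $\Delta_j$ sits in residue $j\bmod d$ while $md,nd$ are divisible by $d$ and the additive shift by $x$ moves the residue to $j+x$.

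Next I would identify the $(nd)$-generators on each side. The $(nd)$-generators of $\Delta_j$ are by definition $a_{j,0},\dots,a_{j,n-1}$; shifting by the constant $md+nd+x$ does not alter which element is minimal in each residue class mod $nd$, so the $(nd)$-generators of $C$ are exactly $a_{j,i}+md+nd+x$ for $i=0,\dots,n-1$. The $(nd)$-generators of $B=\Delta_{j+x}$ are $a_{j+x,0},\dots,a_{j+x,n-1}$.

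Now apply Lemma \ref{lem: AB}: we have $B\subset C$ if and only if $c_i-nd\notin B$ for every $(nd)$-generator $c_i$ of $C$. In our situation this translates to the condition $a_{j,i}+md+x\notin \Delta_{j+x}$ for every $i$. Since $a_{j,i}+md+x$ has residue $(j+x)\bmod d$, membership in $\Delta_{j+x}$ is equivalent to membership in $\Delta$, so the condition is exactly $a_{j,i}+md+x\notin \Delta$ for all $i$, which is by definition the condition that $x$ is $j$-suspicious.

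I do not expect any real obstacle here: the lemma is essentially a residue-class repackaging of Lemma \ref{lem: AB}. The only step that needs a moment of care is the identification of the $(nd)$-generators of a shifted invariant subset, but this is immediate once one observes that the shift is by a multiple of $d$ plus $x$, so residues mod $nd$ are permuted rigidly and minimal representatives stay minimal.
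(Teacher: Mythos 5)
Your proposal is correct and matches the paper's own argument: both proofs identify the $(nd)$-generators of $\Delta_j+md+nd+x$ as $a_{j,i}+md+nd+x$ and then invoke Lemma \ref{lem: AB}, translating $c_i-nd\notin B$ into the defining condition of $j$-suspiciousness. You spell out a few verifications (residues, invariance of the shifted set) that the paper leaves implicit, but the route is the same.
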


\begin{proof}
The subset $\Delta_j+md+nd+x$ is supported in remainder $j+x$ and has $(nd)$-generators $a_{j,i}+md+nd+x$. The result now follows from Lemma \ref{lem: AB}.
\end{proof}

\begin{lemma}
\label{lem: mins}
Suppose that $\min \Delta_{j+x}\le\min \Delta_{j}$. Then $x$ is not $j$-suspicious.
\end{lemma}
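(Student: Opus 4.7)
The plan is to prove the contrapositive: assume $x$ is $j$-suspicious, and deduce $\min \Delta_{j+x} > \min \Delta_j$.

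The input I would use is Lemma \ref{lem:susp}, which is the immediate previous lemma and translates $j$-suspiciousness into the set-theoretic inclusion
\[
\Delta_{j+x} \subset \Delta_j + md + nd + x.
\]
Taking the minimum of both sides of this inclusion, and observing that $\min(\Delta_j + md + nd + x) = \min \Delta_j + md + nd + x$, yields
\[
\min \Delta_{j+x} \ge \min \Delta_j + md + nd + x.
\]
Since $m, n, d \ge 1$ and $x \ge 0$ (in fact the definition of suspicious asks $x > 0$, but even $x = 0$ works here), the quantity $md + nd + x$ is strictly positive, giving $\min \Delta_{j+x} > \min \Delta_j$, which contradicts the hypothesis.

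There is no real obstacle — the lemma is essentially a one-line consequence of the characterization in Lemma \ref{lem:susp} combined with monotonicity of $\min$ under inclusion. The only thing one needs to double-check is that the indexing conventions (first index mod $m$, second mod $n$) are consistent with forming $\Delta_{j+x}$ when $j+x$ is reduced modulo $d$; this is harmless because $\Delta_j$ is defined purely in terms of its residue $j$ modulo $d$, and the cofiniteness of $\Delta$ guarantees that $\Delta_{j+x}$ is non-empty so its minimum is well-defined.
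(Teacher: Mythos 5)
Your proof is correct, but it takes a slightly different route from the paper's. The paper argues directly from the definition of $j$-suspicious in terms of $(nd)$-generators: it sets $b=\min\Delta_{j+x}$, locates the $(nd)$-generator $a_{j,i}$ of $\Delta_j$ satisfying $a_{j,i}+md+x\equiv b\pmod{nd}$, and then checks the chain $a_{j,i}+md+x> a_{j,i}\ge\min\Delta_j\ge b$, which forces $a_{j,i}+md+x\in\Delta_{j+x}$ and hence shows $x$ is not $j$-suspicious. You instead invoke Lemma~\ref{lem:susp} (the set-theoretic reformulation $\Delta_{j+x}\subset\Delta_j+md+nd+x$) and pass to minima, using that $A\subset B$ implies $\min A\ge\min B$ for subsets of $\mathbb Z$ bounded below. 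Both arguments are short and sound; yours is more conceptual and reuses the previous lemma, while the paper's is self-contained at the level of generators and produces an explicit witness. Your side remark about residue conventions is the right thing to check, and the observation that $\Delta_{j+x}$ is nonempty (so $\min$ exists) is correctly covered by cofiniteness.
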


\begin{proof}
Denote $b=\min \Delta_{j+x}$. We have $b\equiv j+x\mod d$, so $b-x\equiv j\mod d$ and there exists $i$ such that $b-x-md\equiv a_{j,i}\mod nd$. Now
$$
a_{j,i}+md+x>a_{j,i}\ge \min \Delta_{j}\ge b,\ a_{j,i}+md+x\equiv b\mod nd,
$$
hence $a_{j,i}+md+x\in \Delta_{j+x}$. Therefore $x$ is not $j$-suspicious.
\end{proof}

\begin{corollary}
\label{cor: last ok}
For any $0$-normalized $\Delta$ and $x\in \mathbb{Z}_{>0},$ $x$ is not $(d-x)$-suspicious. 
\end{corollary}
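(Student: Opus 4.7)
The plan is to derive this corollary as an immediate specialization of Lemma \ref{lem: mins}, applied with $j = d-x$ (where the first index is read modulo $d$, as throughout the section). Since the first index labels remainders modulo $d$, we have $(d-x)+x \equiv 0 \pmod d$, so $\Delta_{(d-x)+x} = \Delta_0$. This is the whole point of choosing $j$ this way: it forces the shifted index to land in the $0$-normalized part, which we control directly.

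Next, because $\Delta$ is $0$-normalized, $0 \in \Delta$, and since $0 \equiv 0 \pmod d$, we actually have $0 \in \Delta_0$. Consequently $\min \Delta_0 = 0$, while trivially $\min \Delta_{d-x} \ge 0$ since $\Delta_{d-x} \subset \mathbb{Z}_{\ge 0}$. Hence
\[
\min \Delta_{(d-x)+x} \;=\; \min \Delta_0 \;=\; 0 \;\le\; \min \Delta_{d-x},
\]
and Lemma \ref{lem: mins} concludes that $x$ is not $(d-x)$-suspicious. There is no real obstacle — the corollary is, as its name suggests, a direct specialization of the previous lemma to the unique choice of $j$ for which the shifted class contains the element $0$ guaranteed by $0$-normalization.
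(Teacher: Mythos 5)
Your proof is correct and follows the paper's approach exactly: apply Lemma \ref{lem: mins} with $j=d-x$ so that $j+x\equiv 0\pmod d$ and the $0$-normalization gives $\min\Delta_0=0$. In fact your version is a touch cleaner — since Lemma \ref{lem: mins} only needs the non-strict inequality $\min\Delta_{j+x}\le\min\Delta_j$, you avoid the separate case split on whether $d\mid x$ that the paper makes.
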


\begin{proof}
Note that by Lemma  \ref{lem: mins} if $d\mid x$ then $x$ is not suspicious. Suppose that $d\nmid x.$ Since $\Delta$ is $0$-normalized we have $\min \Delta_0=0<\min \Delta_{d-x}$, and Lemma \ref{lem: mins} applies.
\end{proof}

The following is a key definition of this section.

\begin{definition}
\label{def: admissible}
We call $\Delta$ admissible, if $1$ is not suspicious for $\Delta$.
\end{definition}

\begin{lemma}\label{lem:prev}
Suppose that $\Delta$ is admissible and $x$ is $j$-suspicious. Then there exists $i$ such that $a_{j,i}+x-1\notin \Delta.$
\end{lemma}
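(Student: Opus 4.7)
The plan is to argue by contradiction: I assume $a_{j,i} + x - 1 \in \Delta$ for every $i \in \{0, \ldots, n-1\}$, and derive that some $a_{j, i_0} + md + x$ lies in $\Delta$, contradicting the $j$-suspiciousness of $x$. The strategy is to match the $(nd)$-generators of $\Delta_j$, shifted by $x - 1$, with the $(nd)$-generators of $\Delta_{j+x-1}$ (indices taken mod $d$), and then to apply admissibility at the new remainder $j + x - 1$.

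Concretely, under the contradiction hypothesis the $n$ values $a_{j,0} + x - 1, \ldots, a_{j,n-1} + x - 1$ all lie in $\Delta_{j+x-1}$, and they occupy $n$ distinct residue classes modulo $nd$ (inherited from the fact that the $a_{j,i}$ do). Since $\Delta_{j+x-1}$ has exactly $n$ $(nd)$-generators $a_{j+x-1, i'}$, one in each such residue class, there is a bijection $\sigma$ on $\{0, \ldots, n-1\}$ and integers $\alpha_i \ge 0$ with
$$a_{j,i} + x - 1 = a_{j+x-1, \sigma(i)} + \alpha_i \cdot nd.$$

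Now I invoke admissibility: since $1$ is not $(j + x - 1)$-suspicious, there exists $i'_0$ with $a_{j+x-1, i'_0} + md + 1 \in \Delta$. Setting $i_0 = \sigma^{-1}(i'_0)$ and adding $md + 1$ to the displayed identity gives, using $(nd)$-invariance of $\Delta$,
$$a_{j, i_0} + md + x = a_{j+x-1, i'_0} + md + 1 + \alpha_{i_0} \cdot nd \in \Delta,$$
contradicting $a_{j, i_0} + md + x \notin \Delta$. The argument is short, and I do not foresee a real obstacle; the only delicate point is bookkeeping of the bijection $\sigma$ together with the arithmetic identity $(x - 1) + (md + 1) = md + x$, so that the $+1$ shift provided by admissibility at remainder $j + x - 1$ precisely produces the element needed to contradict $j$-suspiciousness of $x$.
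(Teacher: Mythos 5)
Your proof is correct and takes essentially the same approach as the paper: both argue by contradiction from $a_{j,i}+x-1\in\Delta$ for all $i$, and both exploit admissibility at the remainder $j+x-1$ to combine the $(x-1)$-shift with the extra $+1$ step so as to contradict $j$-suspiciousness of $x$. The only difference is cosmetic: the paper phrases the step via the inclusion criterion of Lemma \ref{lem:susp} (concluding that $1$ is $(j+x-1)$-suspicious), while you work directly with the $(nd)$-generators and the matching bijection $\sigma$ to produce an element $a_{j,i_0}+md+x\in\Delta$.
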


\begin{proof}
Assume that $a_{j,i}+x-1\in \Delta$ for all $i$, then $\Delta_j+x-1\subset \Delta_{j+x-1}$ and $\Delta_j+md+nd+x\subset \Delta_{j+x-1}+md+nd+1$. Since $x$ is $j$-suspicious, we have 
$\Delta_{j+x}\subset \Delta_j+md+nd+x\subset \Delta_{j+x-1}+md+nd+1$.
Therefore $1$ is suspicious for the remainder $j+x-1$, contradiction.
\end{proof}

\begin{lemma}
If $\Delta$ is admissible then $\Delta$ is $(mnd+1)$-invariant.
\end{lemma}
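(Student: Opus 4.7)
The plan is to use the fact that the semigroup $\Gamma$ of a generic curve is generated by $nd$, $md$, and $mnd+1$, together with the $(nd,md)$-invariance of $\Delta$, to reduce the statement to showing $a_{j,i}+mnd+1 \in \Delta$ for every $(nd)$-generator $a_{j,i}$. Indeed, since $\Delta$ is cofinite and $(nd)$-invariant, every $k\in\Delta$ can be written as $a_{j,i}+s\cdot nd$ for some generator $a_{j,i}$ (namely the one in the same class modulo $nd$) and some $s\ge 0$, so if $a_{j,i}+mnd+1\in\Delta$ then $k+mnd+1\in\Delta$ by $(nd)$-invariance.

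To establish $a_{j,i}+mnd+1\in\Delta$, I would fix the generator $a_{j,i}$ and consider the chain $a_{j,i}, a_{j,i}+md, \ldots, a_{j,i}+(n-1)md$, all of which lie in $\Delta_j$ by $(md)$-invariance. A straightforward induction using equation \eqref{eq: comb syzygy} gives
\[a_{j,i}+k\,md = a_{j,i+k}+\beta_k\,nd,\qquad \beta_k\ge 0,\]
with the second index taken modulo $n$. Because $\GCD(m,n)=1$ is exactly what makes the cyclic structure of Section~\ref{sec: comb gens} a single $n$-cycle, the map $k\mapsto i+k$ is a bijection of $\{0,\ldots,n-1\}$ onto itself, so the elements $a_{j,i+k}$ visit every generator of $\Delta_j$ exactly once.

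Now admissibility enters the argument: since $1$ is not $j$-suspicious, there exists some $i'$ with $a_{j,i'}+md+1 \in \Delta$. Choosing $k_0$ with $i+k_0\equiv i'\pmod n$, $(nd)$-invariance gives
\[a_{j,i}+(k_0+1)md+1 = (a_{j,i'}+md+1)+\beta_{k_0}\,nd\in\Delta.\]
Adding $md$ a further $n-k_0-1$ times and invoking $(md)$-invariance produces $a_{j,i}+nmd+1 = a_{j,i}+mnd+1\in\Delta$, as required.

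The only real content of the proof is the bijectivity of $k\mapsto i+k$ on the $n$ generators of $\Delta_j$; this is precisely what allows the single non-suspicious $i'$ promised by admissibility to be propagated to an arbitrary starting generator $a_{j,i}$ via the $md$-orbit. The rest is routine bookkeeping with $(nd)$- and $(md)$-invariance, and the reduction to the generators at the start.
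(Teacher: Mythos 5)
Your proof is correct and is essentially the same argument as the paper's, just organized with an explicit preliminary reduction to $(nd)$-generators. The core move in both is identical: use admissibility to locate a single index $i'$ with $a_{j,i'}+md+1\in\Delta$, then propagate this around the $md$-orbit of the generators using $(md)$- and $(nd)$-invariance to reach $a+mnd+1$ from an arbitrary starting point $a$ (the paper) or $a_{j,i}$ (your reduction). One small inessential inaccuracy in your exposition: once the cyclic indexing of the generators $a_{j,0},\ldots,a_{j,n-1}$ has been fixed via \eqref{eq: comb syzygy}, the map $k\mapsto i+k\pmod n$ is a bijection for trivial reasons, not because $\GCD(m,n)=1$; the coprimality was already consumed in establishing that the syzygy permutation is a single $n$-cycle, which is what makes the cyclic indexing well-defined in the first place.
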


\begin{proof}
Suppose that $a\in \Delta$ has remainder $j\mod d$. Since $\Delta$ is admissible, there exists an integer $i$ such that $a_{j,i}+md+1\in \Delta$. There exists some $0\le s\le n-1$ such that $a+mds\equiv a_{j,i}\mod nd$. Since $\Delta$ is $(md)$-invariant, we get $a+mds=a_{j,i}+\alpha nd$ for some $\alpha\ge 0$. 
Now 
$$
a+mdn+1=(a+md(s+1)+1)+md(n-1-s)=(a_{j,i}+md+1)+\alpha nd+md(n-1-s)\in \Delta.
$$
\end{proof}

\begin{lemma}
\label{lem: admissible necessary}
Suppose that $J_\Delta\neq\emptyset.$ Then $\Delta$ is admissible.
\end{lemma}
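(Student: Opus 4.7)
The plan is to prove the contrapositive: if $\Delta$ is not admissible, then $J_\Delta = \emptyset$. By assumption, there is some residue $j$ for which $1$ is $j$-suspicious, i.e.\ $a_{j,i}+md+1 \notin \Delta$ for every $i = 0, \ldots, n-1$. I would fix such a $j$ and try to extract a contradiction by exploiting the full cycle of combinatorial syzygies $a_{j,i}+md = a_{j,i+1} + \alpha_{j,i}\, nd$ from \eqref{eq: comb syzygy}. The basic motivation is Example \ref{ex: admissible 4 6 7}, where exactly the same cyclic argument collapses.

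Concretely, suppose $M \in J_{\Delta}$ and let $f_a \in M$ denote the canonical generators from Lemma \ref{lem: canonical}. For each $i \in \{0,\ldots,n-1\}$ form
$$
h_i \;:=\; y(t)\, f_{a_{j,i}} \;-\; x(t)^{\alpha_{j,i}} f_{a_{j,i+1}} \;\in\; M,
$$
with indices read cyclically mod $n$. Since $a_{j,i}+md = a_{j,i+1}+\alpha_{j,i}nd$, the leading monomials $t^{a_{j,i}+md}$ cancel. Writing $c_{j,i}$ for the coefficient of $t^{a_{j,i}+1}$ in $f_{a_{j,i}}$, a direct multiplication using $y(t) = t^{md} + \lambda t^{md+1} + \ldots$ and $x(t)^{\alpha_{j,i}} = t^{\alpha_{j,i}nd}$ gives
$$
[t^{a_{j,i}+md+1}]\, h_i \;=\; c_{j,i} \;+\; \lambda \;-\; c_{j,i+1}.
$$
Note that $c_{j,i}$ is a genuine coefficient of the canonical form (not forced to be zero), because Lemma \ref{lem: gaps} applied with $x=1$ to the gap $a_{j,i}+md+1$ implies that $a_{j,i}+1 \in \Gaps(a_{j,i})$.

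Since $h_i \in M$ and $\Delta_M = \Delta$, any nonzero element of $M$ has order in $\Delta$. Because $a_{j,i}+md$ and $a_{j,i}+md+1$ are consecutive integers, the leading nonzero coefficient of $h_i$ occurs at $t^{a_{j,i}+md+1}$ or later; but $a_{j,i}+md+1 \notin \Delta$ by the $j$-suspicious hypothesis, so that coefficient must vanish. This yields the $n$ equations $c_{j,i+1} - c_{j,i} = \lambda$.

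Summing these $n$ equations cyclically over $i$, the left-hand side telescopes to $0$ while the right-hand side is $n\lambda$. Since $\lambda \neq 0$, this is a contradiction, so $J_\Delta$ is empty. The only subtle point — and in fact the key insight — is the cyclic closure: a single syzygy is not enough, but the $n$-cycle forces the mean of the $c_{j,i+1}-c_{j,i}$ to be both $0$ (by telescoping) and $\lambda$ (by the local equations). Everything else is routine series multiplication, and the hypothesis $\lambda \neq 0$ enters exactly here, exhibiting why the admissibility condition is the natural obstruction to non-emptiness of strata for generic curves.
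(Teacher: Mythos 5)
Your proof is correct and is essentially the paper's own argument: form the $n$ syzygy elements $y(t)f_{a_{j,i}} - x(t)^{\alpha_{j,i}} f_{a_{j,i+1}} \in M$, note each must have vanishing coefficient at $t^{a_{j,i}+md+1}$ (since that exponent is a gap of $\Delta$), and sum the resulting relations $c_{j,i+1}-c_{j,i}=\lambda$ cyclically to obtain $n\lambda=0$, contradicting $\lambda\neq 0$. The only cosmetic differences from the paper are a sign flip on the syzygy element and your invocation of Lemma~\ref{lem: gaps} where the paper uses $(md)$-invariance directly to see $a_{j,i}+1\notin\Delta$.
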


\begin{proof}
We follow the strategy of Lemma \ref{lem: suspicious dependent}.
Suppose $\Delta$ is not admissible, then $1$ is suspicious for $\Delta$ and there exists some $j$ such that $a_{j,i}+md+1\notin \Delta$ for all $i$. This implies $a_{j,i}+1\notin \Delta$ for all $i$, so we have canonical generators  $g_{a_{j,i}}=t^{a_{j,i}}+g_{a_{j,i};1}t^{a_{j,i}+1}+\ldots$ 
Recall that by  Lemma \ref{lem: comb syzygy}  we get $a_{j,i}+md=a_{j,i+1}+\alpha_{j,i} nd$ and we get
$$
t^{\alpha_{j,i} nd}g_{a_{j,i+1}}-y(t)g_{a_{j,i}}=t^{\alpha_{j,i} nd}g_{a_{j,i+1}}-(t^{md}+\lambda t^{md+1}+\ldots)g_{a_{j,i}}=(g_{a_{j,i+1};1}-g_{a_{j,i};1}-\lambda)t^{a_{j,i}+md+1}+\ldots
$$
hence
$g_{a_{j,i+1};1}-g_{a_{j,i};1}-\lambda=0$ for all $i$.  By adding all these together we get $n\lambda=0$, contradiction.
\end{proof}

\begin{remark}
\label{rem: admissible any field}
  The very last part of the argument the lemma relies on the fact that we work in
  characteristic \(0\). More generally, the argument is valid is we assume that the characteristic of the base field is
  coprime with \(n\).
\end{remark}

See Example \ref{ex: admissible 4 6 7} for an illustration of this lemma.

\begin{example}
If $d=1$ then all $(n,m)$-invariant modules are admissible. Indeed, if $\Delta$ is $(n,m)$-invariant with $n$-generators $a_1<\ldots<a_n$ then $[a_n,+\infty)\subset \Delta$. In particular, for any $x>0$ one has $a_n+m+x\in \Delta$, so $x$ is not suspicious.
\end{example}

\begin{example}
Suppose that $n=1$ and $\Delta$ is $(d,md)$-invariant (that is, $d$-invariant). Suppose that $a_{j,0}$ are $d$-generators of $\Delta$. Then $1$ is $j$-suspicious if and only if $a_{j,0}+md+1\notin \Delta$, so $\Delta$ is admissible if and only if $a_{j,0}+md+1\in \Delta$ for all $j$, which is equivalent to $\Delta$ being $(md+1)$-invariant. 
\end{example}

\subsection{Paving by affine spaces}
\label{sec:paving-affine-spaces}

Recall that  \(\Gen\) has coordinates $g_{a_{j,i};x}$ where $a_{j,i}+x\notin \Delta$. Sometimes we will use the notation $g_{a_{j,i};x}$ for all $x$, assuming
\begin{equation}
\label{eq: g zero}
g_{a_{j,i};x}=0\quad \text{if}\ a_{j,i}+x\in \Delta.
\end{equation}

It is also convenient to consider $\Gen$ as a graded vector space
\begin{equation*}
\Gen=\bigoplus_{x=1}^\infty \Gen_x,
\end{equation*}
where $\Gen_x$ is spanned by $g_{a_{j,i};x}$ with a fixed $x.$

There are coordinates on \(\Gen\) that are most suitable for study of equations \(s_k\) in the case \(\Delta\) is admissible.  
Let us define
\[g^{-}_{j,i;x}=g_{a_{j,i};x}-g_{a_{j,i+1};x}, \quad i=0,\dots,n-1.\] These functions  are linearly dependent, for example
  \(\sum_i g^-_{j,i;x}=0\). We choose a  subset of these  as follows:
\begin{itemize}
\item[(1)] If  \(x\) is \(j\)-suspicious, we define \(I(j;x)=\{0,\dots,n-2\}\). 

\item[(2)] If \(x\) is not \(j\)-suspicious, we can define \(I(j;x)\) to be a set of \(i\) such that \(a_{j,i}+dm+x\notin \Delta\). 
\end{itemize}

\begin{lemma}
\label{lem: linear independent}
The functions \(g^-_{j,i;x}\), \(i\in I(j;x)\) are linearly independent.
\end{lemma}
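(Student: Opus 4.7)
The plan is to expand any vanishing linear combination $\sum_{i\in I(j;x)} c_i\, g^-_{j,i;x}=0$ in terms of those coordinates $g_{a_{j,k};x}$ that are genuinely free (i.e.\ those with $a_{j,k}+x\notin\Delta$), and then to force each $c_i$ to vanish by propagation around the cyclic structure of the $(nd)$-generators $a_{j,\cdot}$.

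The key preliminary step is to invoke Lemma \ref{lem: gaps}: whenever $a_{j,i}+md+x\notin \Delta$, both $a_{j,i}+x$ and $a_{j,i+1}+x$ are gaps as well. Setting $V=\{k : a_{j,k}+x\notin\Delta\}$, so that the $g_{a_{j,k};x}$ with $k\in V$ form an independent family on $\Gen_x$ while $g_{a_{j,k};x}=0$ for $k\notin V$, this has two consequences: in case (1), $V=\{0,\ldots,n-1\}$ because $x$ is $j$-suspicious, so Lemma \ref{lem: gaps} applies at every syzygy; in case (2), both endpoints $i,i+1$ of every edge in $I(j;x)$ lie in $V$, and $I(j;x)\subsetneq\{0,\ldots,n-1\}$ since non-$j$-suspiciousness provides at least one index with $a_{j,i}+md+x\in\Delta$.

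Extending $c_i:=0$ for $i\notin I(j;x)$ and rearranging, I get
\[
\sum_{i\in I(j;x)} c_i\, g^-_{j,i;x} \;=\; \sum_{k=0}^{n-1}(c_k-c_{k-1})\,g_{a_{j,k};x} \qquad (\text{indices mod } n),
\]
which vanishes iff $c_k=c_{k-1}$ for every $k\in V$. In case (1), chaining these around the full cycle forces $c_0=\cdots=c_{n-1}$; combined with $c_{n-1}=0$ (since $n-1\notin I(j;x)$), every $c_i$ vanishes. In case (2), $I(j;x)$ decomposes cyclically into a disjoint union of maximal arcs of consecutive edges; for each maximal arc $\{i_0,\ldots,i_0+\ell\}\subseteq I(j;x)$ with $i_0-1\notin I(j;x)$, the boundary constraint at $k=i_0\in V$ gives $c_{i_0}=c_{i_0-1}=0$, and propagating through the constraints at $k=i_0+1,\ldots,i_0+\ell+1$ (each vertex lies in $V$ by the first step) kills the remaining $c_i$ in the arc. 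I do not expect a real obstacle here; the whole argument is boundary-to-interior propagation along arcs of the cycle, and the only delicate point is that Lemma \ref{lem: gaps} is precisely what prevents an edge $g^-_{j,i;x}$ from silently collapsing to a single coordinate or to zero, which would break the propagation.
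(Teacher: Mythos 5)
Your proof is correct and takes essentially the same approach as the paper: rewrite the vanishing combination telescopically in the genuine coordinates $g_{a_{j,k};x}$ with $k\in V$, then propagate the coefficient constraints $c_k=c_{k-1}$ around the cyclic index set until they all vanish. The only real difference is bookkeeping: the paper runs a backward propagation by contradiction (starting from any $i\in I(j;x)$ with $c_i\neq 0$ and forcing $i-1\in I(j;x)$ by cancellation of $g_{a_{j,i};x}$, contradicting properness of $I(j;x)$), while you decompose $I(j;x)$ into maximal arcs and propagate forward from each arc's left boundary. One small remark: your invocation of Lemma \ref{lem: gaps} is stronger than necessary — the paper only uses $(dm)$-invariance to get $i\in I(j;x)\Rightarrow a_{j,i}+x\notin\Delta$, i.e.\ $I(j;x)\subset V$; that alone already guarantees that the constraint $c_k-c_{k-1}=0$ is active at every vertex $k$ of each arc, so the propagation never "falls off" $V$. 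Your worry about an edge $g^-_{j,i;x}$ collapsing to zero cannot occur once $i\in V$, since $g_{a_{j,i};x}$ is a genuine coordinate, so the extra endpoint information from Lemma \ref{lem: gaps} is harmless but not needed.
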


\begin{proof}
Since $\Delta$ is $(dm)$-invariant, the condition $a_{j,i}+dm+x\notin \Delta$ implies $a_{j,i}+x\notin \Delta$. We consider two cases:

1) If $x$ is $j$-suspicious then, by definition, $a_{j,i}+md+x\notin \Delta$ and hence $a_{j,i}+x\notin \Delta$ for all $i$. In particular, \eqref{eq: g zero} does not apply and all $g_{a_{j,i};x}$ are linearly independent coordinates on $\Gen$. Therefore
\(g^-_{j,i;x}\), \(i\in \{0,\dots,n-2\}\) are linearly independent as well. 

2) Suppose that $x$ is not $j$-suspicious. Then for some $s$ we have \(a_{j,s}+dm+x\in \Delta\) and $s\notin I(j;x)$, therefore $I(j;x)$ is a proper subset of $\{0,\ldots,n-1\}$. Furthermore, for $i\in I(j;x)$ the condition \eqref{eq: g zero} does not apply, so
$g_{a_{j,i};x}$ for $i\in I(j;x)$ form a linearly independent subset of coordinates on $\Gen$. 

Assume that there is a nontrivial linear relation between $g^{-}_{j,i';x}=g_{a_{j,i'};x}-g_{a_{j,i'+1};x}$ which contains $g^{-}_{j,i;x}$ for some $i\in I(j;x)$. Since $g_{a_{j,i};x}$ is a coordinate on $\Gen$ that cancels out, the relation must contain $g^{-}_{j,i-1;x}$ as well, so $i-1\in I(j;x)$. By induction, we conclude that $i\in I(j;x)$ for all $i$, contradiction. Therefore \(g^-_{j,i;x}\), \(i\in I(j;x)\) are linearly independent.
\end{proof}



  
  Finally, if there exists at least one integer $i$ such that $a_{j,i}+x\notin \Delta,$ then we set \[g^+_{j;x}=\sum_{i:a_{j,i}+x\notin \Delta}  g_{a_{j,i};x}.\] In particular, if \(x+1\) is suspicious then Lemma~\ref{lem:prev} guarantees that the sum is not empty.

  Thus if $I(j;x)\neq\emptyset$ then \(\{g^-_{j,i;x},g^+_{j;x}\}\), \(i\in I(j;x)\) are linearly independent linear
  coordinates on the space of generators \(\Gen_x\). For each \(j,x\) such that $I(j;x)\neq\emptyset$ let us fix a subset
  \(\bar{I}(j;x)\) such that \(\left\{g^-_{j,i;x},g^+_{j;x}, g_{a_{j,i'};x}\right\}\) \(i\in I(j;x)\),
  \(i'\in \bar{I}(j;x)\)  is a basis of linear coordinates on \(\Gen_x\). For $I(j;x)=\emptyset$ set $\bar{I}(j;x)=\{0,1,\dots,n-1\}.$

  \begin{remark}
\label{rem: basis any field}
To show that \(\left\{g^-_{j,i;x},g^+_{j;x}, g_{a_{j,i'};x}\right\}\) \(i\in I(j;x)\),
      \(i'\in \bar{I}(j;x)\) is a basis of linear coordinates we used that we work over \(\mathbb{C}\). The argument also works for a base field of characteristic larger than \(n\). 
    The rest of the arguments in this section are also valid under this assumption.
    \end{remark}
  
We will need to briefly use the higher coefficients in the expansion of $y(t),$ so let 

\begin{equation*}
y(t)=t^{md}+\lambda t^{md+1}+\lambda_2 t^{md+2}+\lambda_3 t^{md+3}+\ldots. 
\end{equation*}

It will be convenient to think of the coefficients $\lambda,\lambda_2,\ldots$ as parameters.  

As we will see later the coordinates \(g_{a_{j,i'};x}\), \(i'\in \bar{I}(j;x)\) are not constrained by the equations for \(J_\Delta\). Thus let us set notation for the polynomial ring of these free variables and of \(\lambda\):
  \[R_{\free}=\C[\lambda,\lambda^{-1},\lambda_2,\lambda_3,\ldots][g_{a_{j,i'};x}]_{i'\in \bar{I}(j;x)}.\]

  On the rest of the coordinates  \(g^*_{*;*}\) we introduce a partial order generated as follows. Allowing $*$ to take any independent values,
  \[
  g^-_{*,*;x}<g^+_{*;x}<g^-_{*,*,x+1},
  \] 
coordinates $g^-_{*,*;x}$ are ordered in any arbitrary way, and

$$
g^+_{j+1;x}<g^+_{j;x} 
$$ 
(cyclic notation modulo $d$) for $j\neq d-x-1.$ 

The partial order is compatible with the grading on the algebra \(\mathbb{C}[\Gen\times \C_\lambda]\) defined by
  \[\deg_+(g_{a;x})=x, \quad \deg_+(\lambda)=1, \quad \deg_+(\lambda_i)=i.\] 

  Let us analyze graded properties of the equations  for \(J_\Delta\). We fix \(\deg_g\) for the usual degree 
  \[\deg_g(g_{a;x})=1,\quad \deg_g(\lambda)=\deg_g(\lambda_i)=0.\]
  
  \begin{lemma}\label{lemma:expansionl} For \(x\) such that \(a_{ji}+dm+x\notin \Delta\)
    polynomial \(s_{a_{j,i}+dm;x}\) is homogeneous with respect to the grading \(\deg_+\)
    and
\[\deg_+(s_{a_{j,i}+dm;x})=x.\]

Furthermore, for any \(j,i,x\):
\begin{equation}\label{equation:gjix-}
s_{a_{j,i}+dm;x}=g_{j,i;x}^- + \mbox{polynomial in variables} < g^-_{j,i;x} \mbox{ with coefficients in } R_{\free},
\end{equation}

and if \(x\) is \(j\)-suspicious then
\begin{equation}\label{equation:gjx+}
\sum_{i=0}^{n-1}s_{a_{j,i}+dm;x}=\lambda g^+_{j;x-1}+\mbox{polynomial in variables} < g^+_{j;x-1} \mbox{ with coefficients in } R_{\free}.
\end{equation}      
      
    \end{lemma}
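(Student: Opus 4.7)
The plan is to analyze the iterative construction of $s_{a_{j,i}+dm}$ in detail, tracking the $\deg_+$-grading alongside the partial order on the constrained coordinates. The iteration starts with $y(t)g_{a_{j,i}}$ and, at each step, subtracts a $\C[[t^{nd}]]$-multiple of a canonical generator $g_{a'}$ to kill the current leading term at a position of $\Delta$. The first step uses the syzygy $a_{j,i}+md=a_{j,i+1}+\alpha_{j,i}nd$: subtracting $t^{\alpha_{j,i}nd}g_{a_{j,i+1}}$ produces, for $x\ge 1$, the coefficient of $t^{a_{j,i}+md+x}$ equal to
\[
g^-_{j,i;x}+\lambda\,g_{a_{j,i};x-1}+\lambda_2\,g_{a_{j,i};x-2}+\ldots+\lambda_x,
\]
which is manifestly $\deg_+$-homogeneous of degree $x$. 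Each subsequent subtraction of $\alpha\cdot t^{\beta nd}g_{a'}$, where $\alpha$ is the present coefficient at some $p\in\Delta$ with $\deg_+(\alpha)=p-a_{j,i}-md$, modifies the coefficient at $t^{a_{j,i}+md+x}$ by $-\alpha\cdot g_{a';\,a_{j,i}+md+x-p}$, again of $\deg_+=x$. Induction on iterations then yields the first assertion.

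For the formula \eqref{equation:gjix-}, the only level-$x$ constrained variable in the first-iteration expression above is $g^-_{j,i;x}$; the remaining summands are constants $\lambda_k\in R_{\free}$ multiplying $g_{a_{j,i};x-k}$ of level $<x$. Every later subtraction, corresponding to a position $a_{j,i}+md+z\in\Delta$ with $1\le z<x$, contributes a monomial (level-$z$ coefficient)$\cdot g_{a';x-z}$; since $x-z\le x-1$, both factors lie at level strictly below $x$. Expressing all occurring $g_{a;y}$ in the basis $\{g^-_{*,*;y},\,g^+_{*;y},\,g_{a_{*,*};y}\}_{\bar{I}}$, every non-$R_{\free}$ summand becomes a polynomial in variables of level $<x$, and all such variables are $<g^-_{j,i;x}$ in the partial order.

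For \eqref{equation:gjx+}, sum $s_{a_{j,i}+dm;x}$ over $i=0,\ldots,n-1$. The $g^-_{j,i;x}$ contributions telescope to zero. The level-$(x-1)$ contribution from the first iteration is $\lambda\sum_i g_{a_{j,i};x-1}$, which by the convention $g_{a_{j,i};x-1}=0$ when $a_{j,i}+x-1\in\Delta$ equals $\lambda\,g^+_{j;x-1}$; Lemma~\ref{lem:prev}, applied to the admissible $\Delta$ and the $j$-suspicious $x$, guarantees that this sum is nonempty, so $g^+_{j;x-1}$ is a genuine coordinate. The lower-order terms $\lambda_k\sum_i g_{a_{j,i};x-k}$ for $k\ge 2$ reduce to $\lambda_k\,g^+_{j;x-k}$ (or a pure $R_{\free}$-constant when $x-k=0$), all of level $<x-1$. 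The later iterations contribute monomials (level-$z$ coefficient)$\cdot g_{a';x-z}$ with $a'$ of remainder $j+z\pmod d$: for $z\ge 2$ both factors are at level $\le x-2$, strictly below $g^+_{j;x-1}$; for $z=1$ the factor $g_{a';x-1}$ lies at remainder $j+1$. Corollary \ref{cor: last ok} asserts $j\ne d-x$, so the broken edge of the partial order on $\{g^+_{\bullet;x-1}\}$ (placed at $d-x-1$) is not adjacent to $j$, which implies $g^+_{j+1;x-1}<g^+_{j;x-1}$. Expanding $g_{a';x-1}$ in our basis therefore yields only variables strictly below $g^+_{j;x-1}$.

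The main obstacle is the $z=1$ case in the last paragraph: one must verify that the level-$(x-1)$ contribution to $\sum_i s_{a_{j,i}+dm;x}$ coming from the later iterations lies in $g^+_{j+1;x-1}$ rather than $g^+_{j;x-1}$, and that $g^+_{j+1;x-1}<g^+_{j;x-1}$ in the chosen partial order. Both of these facts hinge on $j\ne d-x$, supplied by Corollary \ref{cor: last ok}, combined with the explicit placement of the broken edge at $j=d-x-1$ in the definition of the order on $\{g^+_{\bullet;x-1}\}$.
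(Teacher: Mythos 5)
Your overall strategy---iterate the reduction, track the $\deg_+$-grading, and compare against the partial order on the constrained coordinates---is exactly the paper's. Two points in your analysis of \eqref{equation:gjx+} need correction, though neither changes the final conclusion.

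First, your claim that for $z\ge 2$ ``both factors are at level $\le x-2$'' fails when $z=x-1$ (relevant once $x\ge 3$): the level-$(x-1)$ coefficient $s^{(\cdot)}_{a_{j,i}+dm;x-1}$ contains $g^-_{j,i;x-1}$, a genuine level-$(x-1)$ variable, so it is not at level $\le x-2$. The argument survives because $g^-_{*,*;x-1}<g^+_{j;x-1}$ by definition of the partial order ($g^-$ at a given level sits below $g^+$ at the same level), but this case must be singled out; the paper does so explicitly under ``$y=x-1$.'' Second, the exceptional index in the $g^+$-chain at level $x-1$ is $j_0=d-(x-1)-1=d-x$, not $d-x-1$ as you write. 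Since Corollary~\ref{cor: last ok} yields precisely $j\ne d-x$ when $x$ is $j$-suspicious, the needed inequality $g^+_{j+1;x-1}<g^+_{j;x-1}$ still follows. With these two repairs your proof matches the paper's.
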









    \begin{proof}
      The expansion \(s_{a_{j,i}+dm}(t)\)  is computed by an iterated process. We define a sequence of power series
$$
s^{(k)}_{a_{j,i}+dm}=\sum_{\ell=0}^{\infty} s^{(k)}_{a_{j,i}+dm;l}t^{a_{j,i}+dm+l}
$$
such that 
 \(s_{a_{j,i}+dm}=s^{(\infty)}_{a_{j,i+dm}}\). Here the seed of the process is defined by

 \begin{equation}\label{eq:s0}
        s^{(0)}_{a_{j,i}+dm}=g_{a_{j,i}}y(t)-t^{dn\alpha_{j,i}}g_{a_{j,i+1}}
\end{equation}
where $\alpha_{j,i}$ are defined by \eqref{eq: comb syzygy}. The step of the process if defined as follows. 
We set
\begin{equation}\label{equation:iteration}
s^{(k+1)}_{a_{j,i}+dm}=s^{(k)}_{a_{j,i}+dm}-\sum_{x>0:a_{j,i}+dm+x\in \Delta} s^{(k)}_{a_{j,i}+dm;x}g_{a_{j+x,i'}}t^{ndu},
\end{equation}
where for every summand $i'$ and $u$ are chosen so that \(a_{j,i}+dm+x=a_{j+x,i'}+ndu\). Note that on each step the term with the smallest $x$ such that $s^{(k)}_{a_{j,i}+dm;x}\neq 0$ and $a_{j,i}+dm+x\in\Delta$ cancels out. Therefore, the process converges. Since \(s^{(0)}_{a_{j,i}+dm;x}\) is homogeneous of degree $x$ with respect to \(\deg_+\)  and the iteration process preserves homogeneity, the first statement follows.

One immediately concludes by induction that for every $k\in\mathbb{Z}_{\ge 0}$ and every $x\in\mathbb{Z}_{\ge 0}$ such that $a_{j,i}+dm+x\in\Delta$ one has $\deg_g(s^{(k)}_{a_{j,i}+dm;x})>k.$ Furthermore, any monomial of $\deg_+=x$ and $\deg_g>2$ can only depend on $g_{a_{*,*};z}$ with $z<x-1.$  Therefore, for $k>0$ and any $x$ one has

\begin{equation}\label{equation:s(k) vs s(k+1)}
s^{(k)}_{a_{j,i}+dm;x}=s^{(k+1)}_{a_{j,i}+dm;x}+\ldots,
\end{equation}
where $\ldots$ is a series depending only on $g_{a_{*,*};z}$ with $z<x-1.$ In particular
\begin{equation}\label{equation:s vs s(1)}
s_{a_{j,i}+dm;x}=s^{(1)}_{a_{j,i}+dm;x}+\ldots.
\end{equation}
Here and below we use the same convention about $\ldots$ as in \eqref{equation:s(k) vs s(k+1)}.

Furthermore, from \eqref{eq:s0} we get
\begin{equation*}
s^{(0)}_{a_{j,i}+dm;x}=g_{a_{j,i};x}-g_{a_{j,i+1};x}+\lambda g_{a_{j,i};x-1}+\ldots=g^-_{j,i,x}+\lambda g_{a_{j,i};x-1}+\ldots.
\end{equation*}
Using \eqref{equation:iteration}, we get
\begin{multline}\label{equation:s(1)}
s^{(1)}_{a_{j,i}+dm;x}=s^{(0)}_{a_{j,i}+dm;x}-\sum_{0<y<x:a_{j,i}+dm+y\in \Delta} s^{(0)}_{a_{j,i}+dm;y}g_{a_{j+y,i'};x-y}\\
=g^-_{j,i,x}+\lambda g_{a_{j,i};x-1}+\sum_{0<y<x:a_{j,i}+dm+y\in \Delta} (g^-_{j,i;y}+\ldots)g_{a_{j+y,i'};x-y}+\ldots\\
%
\end{multline} 

Combining \eqref{equation:s vs s(1)} and \eqref{equation:s(1)} we immediately get Equation \eqref{equation:gjix-}. To get Equation \eqref{equation:gjx+} we take the sum:
\begin{multline*}
\sum_{i=0}^{n-1}s_{a_{j,i}+dm;x}=\sum_{i=0}^{n-1} s^{(1)}_{a_{j,i}+dm;x}+\ldots=\lambda g^+_{j;x-1}+\sum_{i=0}^{n-1}\sum_{0<y<x:a_{j,i}+dm+y\in \Delta} (g^-_{j,i;y}+\ldots)g_{a_{j+y,i'};x-y}+\ldots\\
\end{multline*}
Note that only terms with $y=1$ and $y=x-1$ contribute, other terms depend on $g_{a_{j,i};z}$ with $z<x-1$ only. Furthermore, for $y=x-1$ we have $g^-_{j,*;x-1}<g^+_{j,x-1}.$ For $y=1$ the variable $g_{a_{j+1,i'};x-1}$ is a linear combination of $g^-_{j+1,*;x-1}$ and $g^+_{j+1;x-1}$ (modulo variables from $R_{free}$). Since $x$ is $j$-suspicious, according to Corollary \ref{cor: last ok} we have $j\neq d-x=d-(x-1)-1,$ therefore $g^+_{j+1;x-1}<g^+_{j;x-1}.$ Hence we obtain Equation \eqref{equation:gjx+}.





    \end{proof}

\begin{proposition}\label{prop:adm-tria}
  If \(\Delta\) is admissible then
  \[J_{\Delta}=\mathbb{C}^{\dim(\Delta)},\quad \dim(\Delta)=G(\Delta)-E(\Delta),\]
where \(G(\Delta)\) and \(E(\Delta)\) are given by the equations \eqref{eq:GE}.
\end{proposition}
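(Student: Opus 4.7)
My plan is to identify $J_\Delta$ with an affine space by realizing the defining equations $s_{a_{j,i}+dm;x}(g)=0$ as a triangular system with respect to the partial order on the coordinates of $\Gen$: I will use each equation to express a distinct ``leading'' coordinate as a polynomial in strictly smaller coordinates and elements of $R_{\free}$. Since there are $E(\Delta)$ equations and $G(\Delta)$ coordinates, this will produce an isomorphism $J_\Delta \cong \C^{G(\Delta)-E(\Delta)} = \C^{\dim(\Delta)}$.

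First, I would group the equations according to whether $x$ is $j$-suspicious. For non-$j$-suspicious $(j,x)$, the $|I(j;x)|$ equations $s_{a_{j,i}+dm;x}=0$ with $i\in I(j;x)$ each have leading variable $g^-_{j,i;x}$ with coefficient $1$ by \eqref{equation:gjix-}, and I use them directly to solve for this variable. For $j$-suspicious $(j,x)$, I replace the $n$ equations $\{s_{a_{j,i}+dm;x}=0\}_{i=0}^{n-1}$ by the equivalent system consisting of $s_{a_{j,i}+dm;x}=0$ for $i=0,\dots,n-2$ together with their sum $\sum_{i=0}^{n-1}s_{a_{j,i}+dm;x}=0$. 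The first $n-1$ still solve for $g^-_{j,i;x}$ as before, while the sum, by \eqref{equation:gjx+}, has leading variable $g^+_{j;x-1}$ with coefficient $\lambda\neq 0$. Admissibility enters here through Lemma \ref{lem:prev}, which guarantees that at least one $a_{j,i}+x-1\notin\Delta$ and hence that $g^+_{j;x-1}$ is a genuine nonzero linear form on $\Gen$. With a leading variable pinned to each equation, an induction on the partial order then expresses every constrained coordinate as a polynomial in the $\dim(\Delta)$ remaining free coordinates, yielding the claimed isomorphism.

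The main obstacle, and the entire point of the careful setup preceding the proposition, is to verify the triangularity, i.e.\ that every term on the right-hand side of \eqref{equation:gjix-} and \eqref{equation:gjx+} is strictly below the designated leading variable. Lemma \ref{lemma:expansionl} is precisely this statement, but it depends on a delicate choice of partial order: in particular, the sum equation's remainder involves $g^+_{j+1;x-1}$, so the cyclic order on $g^+_{*;x-1}$ must be broken at exactly the index where no sum equation is needed. Corollary \ref{cor: last ok}, which says $x$ is never $(d-x)$-suspicious for $0$-normalized $\Delta$, is what ensures consistency. The remaining bookkeeping---checking that the constrained variables are distinct and number $E(\Delta)$, split as $|I(j;x)|$ per non-suspicious $(j,x)$ and $(n-1)+1=n$ per suspicious $(j,x)$---is routine.
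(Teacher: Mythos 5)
Your proof is correct and follows the same route as the paper: replace, for each $j$-suspicious $x$, the last of the $n$ equations by the sum $\sum_{i=0}^{n-1}s_{a_{j,i}+dm;x}=0$, then read off leading variables $g^-_{j,i;x}$ and $g^+_{j;x-1}$ from \eqref{equation:gjix-} and \eqref{equation:gjx+} and eliminate them inductively along the partial order. Your added commentary on why Lemma \ref{lem:prev} makes $g^+_{j;x-1}$ a genuine (nonempty) linear form and why Corollary \ref{cor: last ok} is exactly what breaks the cyclic order on $g^+_{*;x-1}$ at a harmless index correctly identifies the two places where admissibility and $0$-normalization are used; the paper leaves these implicit in the phrase ``according to Lemma \ref{lemma:expansionl}.''
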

\begin{proof}
Recall that $J_\Delta$ is defined by the equations $s_{a_{j,i}+dm;x}=0$ for $j,i,x$ such that $a_{j,i}+dm+x\notin\Delta.$ We can modify this system of equations as follows. Whenever $x$ is $j$-suspicious, replace $s_{a_{j,n-1}+dm;x}=0$ by $\sum\limits_{i=0}^{n-1} s_{a_{j,i}+dm;x}=0.$ Clearly, the new system of equations is equivalent to the old one. Furthermore, according to Lemma \ref{lemma:expansionl}, the new system of equations expresses some of the elements of a basis of the space $\Gen$ in terms of the smaller variables with respect to the order $<.$ Therefore, one can use the equations to eliminate these variables one by one. Since $\dim\Gen=G(\Delta)$ and there are $E(\Delta)$ equations, we obtain the required result.

\end{proof}  

\section{Combinatorics}
\label{sec: combi}

\subsection{More on invariant subsets}
 
Let $\Delta$ be an $(nd,md)$-invariant subset. We call $b$ an $(md)$-cogenerator for $\Delta$ if $b\notin \Delta$ but $b+md\in \Delta$.

\begin{lemma}
\label{lem: dim gen cogen}
Let $\Delta$ be an admissible cofinite $(nd,md)$-invariant subset. The dimension of $J_{\Delta}$ equals to the number of pairs $(a,b)$ such that $a$ is an $(nd)$-generator of $\Delta$, $b$ is an $(md)$-cogenerator and $a<b$.
\end{lemma}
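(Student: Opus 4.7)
The plan is to start from Proposition~\ref{prop:adm-tria}, which already gives
\[
\dim J_{\Delta}=G(\Delta)-E(\Delta)=\sum_{j,i}\bigl(|\Gaps(a_{j,i})|-|\Gaps(a_{j,i}+md)|\bigr),
\]
and then to reinterpret this as counting pairs $(a,b)$ with $a$ an $(nd)$-generator, $b$ an $(md)$-cogenerator, and $a<b$.

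First I would notice that since $a_{j,i}\in\Delta$, we have $\Gaps(a_{j,i})=\Gaps_{\Delta}(a_{j,i})\subset (a_{j,i},\infty)$, so the difference $|\Gaps(a_{j,i})|-|\Gaps(a_{j,i}+md)|$ counts exactly the gaps $c$ of $\Delta$ with $a_{j,i}<c<a_{j,i}+md$. Summing over generators, $\dim J_{\Delta}$ equals the number of pairs $(a,c)$ where $a$ is an $(nd)$-generator and $c$ is a gap with $a<c<a+md$.

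The heart of the proof is then a bijection between these pairs $(a,c)$ and the pairs $(a,b)$ in the statement. The key observation is that since $\Delta$ is $(md)$-invariant and cofinite, every gap $c$ can be written uniquely as $c=b-kmd$ with $b$ an $(md)$-cogenerator and $k\ge 0$: take $k$ minimal with $c+(k+1)md\in\Delta$, so $b:=c+kmd$ is a cogenerator, and uniqueness follows because two cogenerators cannot differ by a positive multiple of $md$ (that would contradict $(md)$-invariance). Using this, I define
\[
(a,c)\ \longmapsto\ (a,b),\qquad b\text{ the cogenerator with }c=b-kmd,\ k\ge 0.
\]
For the inverse, given $(a,b)$ with $a<b$, I need a unique $k\ge 0$ with $a<b-kmd<a+md$. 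This amounts to $\tfrac{b-a}{md}-1<k<\tfrac{b-a}{md}$, which contains a unique integer provided $b-a$ is not a multiple of $md$. But if $b=a+s\cdot md$ with $s\ge 1$, then $(md)$-invariance forces $b\in\Delta$, contradicting $b$ being a cogenerator. Hence $k$ exists and is unique, and one checks $k\ge 0$ since $b>a$.

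The two constructions are clearly inverse to each other, so the bijection is established and the lemma follows. I do not expect a serious obstacle here: the only place that needs care is ruling out $md\mid (b-a)$, which is exactly where $(md)$-invariance of $\Delta$ is used, and admissibility of $\Delta$ enters only implicitly through the dimension formula of Proposition~\ref{prop:adm-tria}.
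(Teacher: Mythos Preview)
Your proof is correct and follows essentially the same approach as the paper: both start from Proposition~\ref{prop:adm-tria} and show that $|\Gaps(a)|-|\Gaps(a+md)|$ counts the $(md)$-cogenerators greater than $a$. The paper compresses your two-step bijection into a single observation---the shift $y\mapsto y-md$ injects $\Gaps(a+md)$ into $\Gaps(a)$ by $(md)$-invariance, and the complement of the image is exactly the set of cogenerators $>a$---but the underlying idea is the same.
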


\begin{proof}
By Proposition  \ref{prop:adm-tria} the dimension of $J_{\Delta}$ equals $\sum_{a\in A} (|\Gaps(a)|-|\Gaps(a+md)|),$ where the sum is over $(nd)$-generators. The difference $|\Gaps(a)|-|\Gaps(a+md)|$ equals the number of $(md)$-cogenerators greater than $a$. 
\end{proof}

\subsection{Relatively prime case} Let $\Theta$ be an $n,m$-invariant subset in $\mathbb{Z}.$ As before, $n,m$ are relatively prime. 

\begin{definition}
The {\em skeleton} $S$ of $\Theta$ is the union of the $n$-generators and $m$-cogenerators of $\Theta$.
\end{definition}

An important way of visualizing $(n,m)$-invariant subsets is by using the \textit{periodic lattice paths} as follows. Consider a two-dimensional square lattice with the boxes labeled by integers via the linear function $l(x,y)=-nx-my$ (here $(x,y)$ are the coordinates of the, say, northeast corner of the square box). Since $n$ and $m$ are relatively prime, every integer appears exactly once in each horizontal strip of width $n$ (and in each vertical strip of width $m$). Furthermore, the labeling is periodic with period $(m,-n).$ Given an $(n,m)$-invariant subset $\Theta\subset\mathbb{Z},$ the corresponding periodic lattice path $P(\Theta)$ is the boundary that separates the set of boxes labeled by integers from $\Theta$ from the set of boxes labeled by integers from the complement $\mathbb{Z}\setminus\Theta.$ Note that the $n$-generators of $\Theta$ are exactly the labels in the boxes just south of the path, and the $m$-cogenerators of $\Theta$ are exactly the labels in the boxes just to the east of the path. In particular, a number $k$ belongs to the skeleton of an $(n,m)$-invariant subset if an only if the northwest corners of the boxes labeled by $k$ lie on the corresponding periodic path. We immediately get the following:

\begin{lemma}\label{lemma: skeleton vs periodic path}
Skeletons of two $(n,m)$-invariant subsets intersect if and only if the corresponding periodic lattice paths intersect.
\end{lemma}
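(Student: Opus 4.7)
The plan is to deduce both directions directly from the characterization recorded immediately before the lemma: $k\in S(\Theta)$ if and only if the northeast corners of the boxes labeled $k$ lie on $P(\Theta)$. Because the labeling $l(x,y)=-nx-my$ and the periodic path $P(\Theta)$ both admit $(m,-n)$ as a period, and because the set of all boxes carrying a fixed label $k$ forms a single orbit under translation by $(m,-n)$ (since $\gcd(m,n)=1$ makes the solutions of $nx+my=-k$ an arithmetic progression with step $(m,-n)$), the condition ``the NE corner of some box labeled $k$ lies on $P(\Theta)$'' is equivalent to the same condition for every such box. I will use this one-box-suffices reformulation in both directions.

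For the forward implication, I would take any $k\in S(\Theta_1)\cap S(\Theta_2)$, fix any box $B$ with label $k$, and apply the characterization to each $\Theta_i$: the northeast corner $p$ of $B$ is simultaneously on $P(\Theta_1)$ and on $P(\Theta_2)$, so the two paths meet at $p$.

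For the reverse implication, the first step is to upgrade the hypothesis ``the paths intersect'' to ``the paths share a lattice point.'' This is a short geometric observation: a horizontal unit lattice edge $[(a,b),(a+1,b)]$ and a vertical unit lattice edge $[(c,d),(c,d+1)]$ can only meet at the point $(c,b)$, and the inequalities $a\le c\le a+1$, $d\le b\le d+1$ combined with integrality of $a,b,c,d$ force $(c,b)$ to be a common endpoint; two parallel unit edges that overlap do so along a segment with lattice endpoints. So from any nonempty intersection I may extract a common lattice point $p\in P(\Theta_1)\cap P(\Theta_2)$. Letting $B$ be the unique unit box with northeast corner $p$ and $k=l(p)$ its label, the characterization applied backward to each $\Theta_i$ gives $k\in S(\Theta_1)\cap S(\Theta_2)$, completing the argument.

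The only mildly delicate step is the lattice-point observation in the reverse direction; everything else is an immediate application of the preceding characterization together with the periodicity reformulation, so I do not anticipate any substantive obstacle and no auxiliary lemma beyond what is already stated should be required.
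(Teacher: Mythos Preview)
Your proposal is correct and follows essentially the same approach as the paper, which simply states that the lemma is immediate from the characterization ``$k\in S(\Theta)$ iff the northeast corners of the boxes labeled $k$ lie on $P(\Theta)$.'' Your version is more explicit, in particular adding the harmless observation that two lattice paths that intersect must share a lattice point, but this is just a careful unpacking of what the paper leaves implicit.
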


See Figure \ref{Figure:periodic path} for an example of a periodic path corresponding to a $(3,5)$-invariant subset.

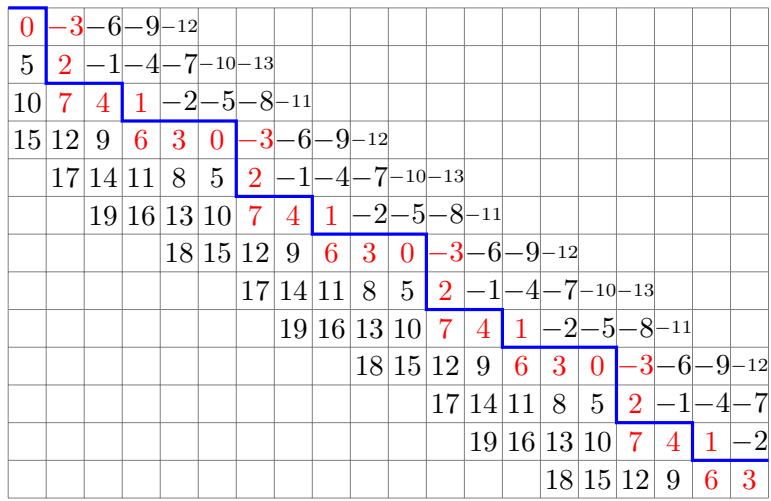
\begin{figure}
\begin{tikzpicture}[scale=0.5]
\draw  [very thin, gray](0,7) grid (20,20);
\draw [red] (0.5,19.5) node {$0$}; 
\draw (0.5,18.5) node {$5$}; 
\draw (0.5,17.5) node {$10$}; 
\draw (0.5,16.5) node {$15$}; 

\draw [red] (1.5,19.5) node {$-3$}; 
\draw [red] (1.5,18.5) node {$2$}; 
\draw [red] (1.5,17.5) node {$7$}; 
\draw (1.5,16.5) node {$12$}; 
\draw (1.5,15.5) node {$17$}; 

\draw (2.5,19.5) node {$-6$}; 
\draw (2.5,18.5) node {$-1$}; 
\draw [red] (2.5,17.5) node {$4$}; 
\draw (2.5,16.5) node {$9$}; 
\draw (2.5,15.5) node {$14$}; 
\draw (2.5,14.5) node {$19$}; 

\draw (3.5,19.5) node {$-9$}; 
\draw (3.5,18.5) node {$-4$}; 
\draw [red] (3.5,17.5) node {$1$}; 
\draw [red] (3.5,16.5) node {$6$}; 
\draw (3.5,15.5) node {$11$}; 
\draw (3.5,14.5) node {$16$}; 

\draw (4.5,19.5) node {\tiny $-12$}; 
\draw (4.5,18.5) node {$-7$}; 
\draw (4.5,17.5) node {$-2$}; 
\draw [red] (4.5,16.5) node {$3$}; 
\draw (4.5,15.5) node {$8$}; 
\draw (4.5,14.5) node {$13$}; 
\draw (4.5,13.5) node {$18$};

\draw (5.5,18.5) node {\tiny $-10$}; 
\draw (5.5,17.5) node {$-5$}; 
\draw [red] (5.5,16.5) node {$0$}; 
\draw (5.5,15.5) node {$5$}; 
\draw (5.5,14.5) node {$10$}; 
\draw (5.5,13.5) node {$15$}; 

\draw (6.5,18.5) node {\tiny $-13$}; 
\draw (6.5,17.5) node {$-8$}; 
\draw [red] (6.5,16.5) node {$-3$}; 
\draw [red] (6.5,15.5) node {$2$}; 
\draw [red] (6.5,14.5) node {$7$}; 
\draw (6.5,13.5) node {$12$}; 
\draw (6.5,12.5) node {$17$}; 

\draw (7.5,17.5) node {\tiny $-11$}; 
\draw (7.5,16.5) node {$-6$}; 
\draw (7.5,15.5) node {$-1$}; 
\draw [red] (7.5,14.5) node {$4$}; 
\draw (7.5,13.5) node {$9$}; 
\draw (7.5,12.5) node {$14$}; 
\draw (7.5,11.5) node {$19$}; 

\draw (8.5,16.5) node {$-9$}; 
\draw (8.5,15.5) node {$-4$}; 
\draw [red] (8.5,14.5) node {$1$}; 
\draw [red] (8.5,13.5) node {$6$}; 
\draw (8.5,12.5) node {$11$}; 
\draw (8.5,11.5) node {$16$}; 

\draw (9.5,16.5) node {\tiny $-12$}; 
\draw (9.5,15.5) node {$-7$}; 
\draw (9.5,14.5) node {$-2$}; 
\draw [red] (9.5,13.5) node {$3$}; 
\draw (9.5,12.5) node {$8$}; 
\draw (9.5,11.5) node {$13$}; 
\draw (9.5,10.5) node {$18$};

\draw (10.5,15.5) node {\tiny $-10$}; 
\draw (10.5,14.5) node {$-5$}; 
\draw [red] (10.5,13.5) node {$0$}; 
\draw (10.5,12.5) node {$5$}; 
\draw (10.5,11.5) node {$10$}; 
\draw (10.5,10.5) node {$15$}; 

\draw (11.5,15.5) node {\tiny $-13$}; 
\draw (11.5,14.5) node {$-8$}; 
\draw [red] (11.5,13.5) node {$-3$}; 
\draw [red] (11.5,12.5) node {$2$}; 
\draw [red] (11.5,11.5) node {$7$}; 
\draw (11.5,10.5) node {$12$}; 
\draw (11.5,9.5) node {$17$}; 

\draw (12.5,14.5) node {\tiny $-11$}; 
\draw (12.5,13.5) node {$-6$}; 
\draw (12.5,12.5) node {$-1$}; 
\draw [red] (12.5,11.5) node {$4$}; 
\draw (12.5,10.5) node {$9$}; 
\draw (12.5,9.5) node {$14$}; 
\draw (12.5,8.5) node {$19$}; 

\draw (13.5,13.5) node {$-9$}; 
\draw (13.5,12.5) node {$-4$}; 
\draw [red] (13.5,11.5) node {$1$}; 
\draw [red] (13.5,10.5) node {$6$}; 
\draw (13.5,9.5) node {$11$}; 
\draw (13.5,8.5) node {$16$};

\draw (14.5,13.5) node {\tiny $-12$}; 
\draw (14.5,12.5) node {$-7$}; 
\draw (14.5,11.5) node {$-2$}; 
\draw [red] (14.5,10.5) node {$3$}; 
\draw (14.5,9.5) node {$8$}; 
\draw (14.5,8.5) node {$13$}; 
\draw (14.5,7.5) node {$18$};

\draw (15.5,12.5) node {\tiny $-10$}; 
\draw (15.5,11.5) node {$-5$}; 
\draw [red] (15.5,10.5) node {$0$}; 
\draw (15.5,9.5) node {$5$}; 
\draw (15.5,8.5) node {$10$}; 
\draw (15.5,7.5) node {$15$}; 

\draw (16.5,12.5) node {\tiny $-13$}; 
\draw (16.5,11.5) node {$-8$}; 
\draw [red] (16.5,10.5) node {$-3$}; 
\draw [red] (16.5,9.5) node {$2$}; 
\draw [red] (16.5,8.5) node {$7$}; 
\draw (16.5,7.5) node {$12$}; 

\draw (17.5,11.5) node {\tiny $-11$}; 
\draw (17.5,10.5) node {$-6$}; 
\draw (17.5,9.5) node {$-1$}; 
\draw [red] (17.5,8.5) node {$4$}; 
\draw (17.5,7.5) node {$9$}; 

\draw (18.5,10.5) node {$-9$}; 
\draw (18.5,9.5) node {$-4$}; 
\draw [red] (18.5,8.5) node {$1$}; 
\draw [red] (18.5,7.5) node {$6$};

\draw (19.5,10.5) node {\tiny $-12$}; 
\draw (19.5,9.5) node {$-7$}; 
\draw (19.5,8.5) node {$-2$}; 
\draw [red] (19.5,7.5) node {$3$}; 

\draw [very thick, blue] (0,20)--(1,20)--(1,18)--(3,18)--(3,17)--(5,17)--(6,17)--(6,15)--(8,15)--(8,14)--(10,14)--(11,14)--(11,12)--(13,12)--(13,11)--(15,11)--(16,11)--(16,9)--(18,9)--(18,8)--(20,8);
\end{tikzpicture}
\caption{Periodic lattice path corresponding to the $(3,5)$-invariant subset $\Theta=\{0,3,4,5,\ldots\}.$ The $5$-generators of $\Theta$ are $\{0,3,4,6,7\}$, and the $3$-cogenerators are $\{-3,1,2\}$ (both in red).}\label{Figure:periodic path}
\end{figure}

 
\subsection{Equivalence classes of $dn,dm$-invariant subsets}

Let us remind the definitions of the equivalence classes of invariant subsets from \cite{GMVDyck}.   Let $\Theta=\left\{\Theta_0^0,\ldots,\Theta_{l}^0\right\}$ be a collection of $0$-normalized $(n,m)$-invariant subsets. For every $(x_1,\ldots,x_l)\in\mathbb{R}_{\ge 0}^l$ consider
 
\begin{equation*}
\Delta=\Delta(x_1,\ldots,x_l):=\bigcup_{k=0}^l (d\Theta_k^0 + x_k),
\end{equation*}
where $x_0=0.$ If all shift parameters $x_0,\ldots,x_l$ are integers with different remainders modulo $d,$ then $\Delta$ is a $0$-normalized $(nd,md)$-invariant subset. Furthermore, if in addition $l=d-1$ then $\Delta$ is cofinite.

\begin{remark}
In \cite{GMVDyck} only cofinite invariant subsets were considered (i.e. $l=d-1$). However, many results generalize to non-cofinite invariant subsets without any change. 
\end{remark}

For each $\Theta_k^0$ let $S_k^0$ be its skeleton. Consider the space $\mathbb{R}_{\ge 0}^l$ of all possible shifts $x_1,\ldots,x_l.$ Consider the subset $\Sigma_{\Theta}\subset\mathbb{R}_{\ge 0}^l$ consisting of all shifts $x_1,\ldots,x_l$ for which there exists $i$ and $j$ such that 
\begin{equation*}
dS_i^0+x_i\cap dS_j^0+x_j\neq\emptyset.
\end{equation*}

Clearly, $\Sigma_{\Theta}$ is a hyperplane arrangement. We say that two $(nd,md)$-invariant subsets are equivalent if they can be obtained as $\Delta(x_1,\ldots,x_l)$ and $\Delta(y_1,\ldots,y_l)$ for the same collection $\Theta$ and the shifts $(x_1,\ldots,x_l)$ and $(y_1,\ldots,y_l)$ belong to the same connected component of the complement to $\Sigma_\Theta.$ One can show (in fact, it will follow from the construction below) that every connected component of the complement to $\Sigma_{\Theta}$ contains at least one point corresponding to an $(dn,dm)$-invariant subset. We call the connected components of the complement to a hyperplane arrangement the \textit{regions} of the arrangement. See Figure \ref{figure: hyperplane arrangement} for an example of a hyperplane arrangement $\Sigma_\Theta.$

To summarize, in order to get all the equivalence classes of $(dn,dm)$-invariant subsets that have non-trivial intersections with exactly $(l+1)$ congruence classes modulo $d$, one should consider all possible $l+1$-tuples of $0$-normalized $(n,m)$-invariant subsets $\Theta^0_0,\ldots,\Theta^0_l,$ and for each such $(l+1)$-tuple consider the set of regions of $\Sigma_{\Theta}$ in the space of shifts. Furthermore, one should consider these regions up to symmetry: if two of the subsets are equal $\Theta^0_i=\Theta^0_j$ then switching the corresponding shift coordinates $x_i$ and $x_j$ interchanges the connected components corresponding to the same equivalence class of $(dn,dm)$-invariant subsets.

It can be observed (see \cite{GMVDyck}, Lemma $3.16$) that for any region $R$ of $\Sigma_\Theta$ its closure $\overline{R}$ contains the minimal point $(m_1,\ldots, m_l)\in \overline{R},$ such that $m_i=\min\{x_i|(x_1,\ldots,x_l)\in\overline{R}\}$ for all $i.$ Furthermore, it is easy to see that $m_1,\ldots,m_l\in d\mathbb{Z}_{\ge 0}.$ For every $k\in\{0,\ldots, l\}$ set $\Theta_k:=\Theta_k^0+\frac{m_k}{d}$ and $S_k:=S_k^0+\frac{m_k}{d}$ so that $S_k$ is the skeleton of $\Theta_k$. Here $m_0=0.$

\begin{lemma}\label{lemma: intersect vs contain}
One has $S_i\cap S_j=\emptyset$ if and only if either $\Theta_i\subset \Theta_j+n+m$ or $\Theta_j\subset \Theta_i+n+m.$
\end{lemma}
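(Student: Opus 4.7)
The plan is to prove the two implications separately. For the forward (``if'') direction, assume without loss of generality that $\Theta_i\subseteq\Theta_j+n+m$, and suppose for contradiction that some $k$ lies in $S_i\cap S_j$. Since $k$ belongs to each skeleton, it is either an $n$-generator or an $m$-cogenerator of each $\Theta_\ell$, yielding four cases. In every case I combine the containment $\Theta_i\subseteq\Theta_j+n+m$ with the $(n,m)$-invariance of $\Theta_j$ to derive a contradiction. For instance, when $k$ is an $n$-generator of both, we have $k\in\Theta_i\subseteq\Theta_j+n+m$, so $k-n-m\in\Theta_j$; adding $m$ by $m$-invariance of $\Theta_j$ gives $k-n\in\Theta_j$, contradicting $k-n\notin\Theta_j$. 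The remaining three cases are analogous: starting from $k\in\Theta_i$ or $k+m\in\Theta_i$ (whichever holds), one transports the witness across $\Theta_i\subseteq\Theta_j+n+m$ and then adds $n$, $m$, or $n+m$ via $(n,m)$-invariance of $\Theta_j$ to reach $k-n\in\Theta_j$ or $k\in\Theta_j$, conflicting with the defining condition that puts $k$ in $S_j$.

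For the reverse (``only if'') direction, assume $S_i\cap S_j=\emptyset$. By Lemma \ref{lemma: skeleton vs periodic path}, the periodic lattice paths $P_i:=P(\Theta_i)$ and $P_j:=P(\Theta_j)$ share no lattice point. Since these are monotone unit staircases on the integer grid, edges from different paths can only cross at lattice points (a horizontal unit edge meets a vertical unit edge of another path only at an integer endpoint), so $P_i$ and $P_j$ are actually disjoint as subsets of $\mathbb{R}^2$. Both paths are connected and share the same asymptotic periodic direction $(m,-n)$, so one must lie entirely in a single connected component of the complement of the other. Without loss of generality, assume $P_i$ lies on the side making $\Theta_i$ smaller; at the level of lattice columns this says the topmost lattice point of $P_i$ at each integer $x$ is strictly below the bottommost lattice point of $P_j$ at the same $x$, which in particular yields $\Theta_i\subsetneq\Theta_j$.

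It remains to upgrade this strict inclusion to $\Theta_i\subseteq\Theta_j+n+m$. Under the box-label identification from the previous subsection, the path $P(\Theta_j+n+m)$ is obtained from $P_j$ by translating one diagonal lattice unit (the label shift $+n+m$ moves the box with NE corner $(x,y)$ to the box with NE corner $(x-1,y-1)$). Hence $a\in\Theta_j+n+m$ iff the NE corner of the box labeled $a$, shifted by $(-1,-1)$, lies on or below $P_j$. The column-by-column integer separation between $P_i$ and $P_j$, combined with the monotonicity of $P_j$ as a SE-going staircase (so that the lowest lattice $y$-value of $P_j$ only grows when $x$ decreases by $1$), shows that shifting a corner on or below $P_i$ by $(-1,-1)$ still lands on or below $P_j$. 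This gives $\Theta_i\subseteq\Theta_j+n+m$, and the symmetric case (if $P_j$ is the strictly lower path) yields $\Theta_j\subseteq\Theta_i+n+m$. The main obstacle is this last upgrade --- turning ``no shared lattice point'' into ``separated by at least one full diagonal lattice unit'' --- which rests on combining the discreteness of lattice columns with the monotonicity of the staircase.
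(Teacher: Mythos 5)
Your overall route is the same as the paper's: translate disjointness of skeletons into disjointness of the periodic lattice paths (via Lemma \ref{lemma: skeleton vs periodic path}), then convert strict lattice-column separation of the paths into the containment $\Theta_i\subseteq\Theta_j+n+m$ by a one-diagonal-unit shift. Your case analysis for the direction ``containment $\Rightarrow$ disjoint skeletons'' is correct and more explicit than the paper, which simply asserts the equivalence at the level of paths.

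There is, however, a sign error in the upgrade step that makes the key part of that argument vacuous as written. With the label $l(x,y)=-nx-my$ and the $\Theta$-region on the south/southwest side of the path, one has $a\in\Theta_j+n+m$ iff $a-n-m\in\Theta_j$, and the box labeled $a-n-m$ sits at $(x+1,y+1)$, one step to the \emph{northeast} of the box labeled $a$. Equivalently, $P(\Theta_j+n+m)=P_j+(-1,-1)$ as you say, but this means $a\in\Theta_j+n+m$ iff the NE corner of box $a$ shifted by $(+1,+1)$ lies on or weakly below $P_j$, not $(-1,-1)$. As you have written it, ``shifting a corner on or below $P_i$ by $(-1,-1)$ still lands on or below $P_j$'' is true but trivial (moving southwest stays in the region) and never invokes the strict integer separation. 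The content is in the other direction: writing $\mathrm{top}_i(x)$ and $\mathrm{bot}_j(x)$ for the top and bottom lattice heights of the vertical runs at column $x$, disjointness gives $\mathrm{top}_i(x)<\mathrm{bot}_j(x)=\mathrm{top}_j(x+1)$, hence $\mathrm{top}_i(x)+1\le\mathrm{top}_j(x+1)$, which is exactly the statement that $(x,y)$ on or below $P_i$ implies $(x+1,y+1)$ on or below $P_j$. This repairs the step and recovers the paper's conclusion; without the fix the chain of equivalences you invoke does not actually deliver $\Theta_i\subseteq\Theta_j+n+m$.
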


\begin{proof} According to Lemma \ref{lemma: skeleton vs periodic path}, $S_i\cap S_j=\emptyset$ is equivalent to the fact that the corresponding periodic lattice paths don't intersect, which implies that one of them sits weakly below the other one shifted one south and one to the west ($+n+m$), which is equivalent to $\Theta_i\subset \Theta_j+n+m$ or $\Theta_j\subset \Theta_i+n+m$.
\end{proof}

Note that the hyperplanes of $\Sigma_\Theta$ that pass through a point $(m_1,\ldots,m_l)$ are of the form $x_i-x_j=m_i-m_j$ with one such hyperplane for each pair $i>j$ such that the corresponding skeletons intersect: $S_i\cap S_j\neq\emptyset$ (here $j$ might be zero, in which case $m_0=x_0=0$). The point $(m_1,\ldots,m_l)$ might be the minimal point of closures of multiple regions of $\Sigma_\Theta.$ In order to pick one such region $R,$ for each hyperplane $x_i-x_j=m_i-m_j$ of $\Sigma_\Theta$ one should pick on which side of that hyperplane the region lies. This information can be encoded into a digraph $G$ with vertices $\Theta_0,\ldots,\Theta_l$ and edges connecting subsets $\Theta_i,\Theta_j$ with intersecting skeletons, going in the direction of $\Theta_i$ whenever $R\subset \{x_i-m_i>x_j-m_j\}$. Note that $G$ is acyclic and defines a partial order on $\Theta$. The minimality of the point $(m_1,\ldots,m_l)$ implies also that $\Theta_0$ is the unique source.

\begin{figure}
\begin{tikzpicture}[scale=0.3]

\filldraw[gray!50!white] (0,18)--(3,18)--(3,33)--(0,33)--(0,18);
\filldraw[gray!50!white] (0,0)--(3,0)--(3,3)--(0,3)--(0,0);

\draw (-1,33) node {$x_2$};
\draw (39,-1) node {$x_1$};

\foreach \i in {0,...,12}
	\foreach \j in {0,...,10}{
		\filldraw (3*\i+1,3*\j+2) circle(1pt); 
		\filldraw (3*\i+2,3*\j+1) circle(1pt); 
		}
\foreach \i in {0,...,6}
	\foreach \j in {0,...,4}{
		\filldraw[red] (3*\i+1,3*\j+2) circle(4pt); 
		\filldraw[red] (3*\i+2,3*\j+1) circle(4pt);
		\filldraw[red] (3*\i+3*\j+2,3*\j+1) circle(4pt);
		}
\foreach \i in {1,...,6}{
	\filldraw[red] (3*\i+1,17) circle(4pt);
	}
\foreach \i in {2,...,12}{
	\filldraw[red] (3*\i+2,19) circle(4pt);
	}
\foreach \i in {2,...,6}{
	\filldraw[red] (3*\i+1,20) circle(4pt);
	}
\foreach \i in {3,...,6}{
	\filldraw[red] (3*\i+1,23) circle(4pt);
	}
\foreach \i in {4,...,6}{
	\filldraw[red] (3*\i+1,26) circle(4pt);
	}
\foreach \i in {5,...,6}{
	\filldraw[red] (3*\i+1,29) circle(4pt);
	}
\filldraw[red] (19,32) circle(4pt);
\filldraw[red] (2,19) circle(4pt);
\filldraw[red] (5,19) circle(4pt);
	
\filldraw[blue] (1,20) circle(4pt);
\filldraw[blue] (4,20) circle(4pt);

\draw [->] (0,0)--(40,0);
\draw (0,3)--(39,3);
\draw (0,6)--(39,6);
\draw (0,9)--(39,9);
\draw (0,12)--(39,12);
\draw (0,18)--(39,18);

\draw [->] (0,0)--(0,34);
\draw (3,0)--(3,33);
\draw (6,0)--(6,33);
\draw (9,0)--(9,33);
\draw (12,0)--(12,33);
\draw (15,0)--(15,33);
\draw (18,0)--(18,33);

\draw (0,12)--(21,33);
\draw (0,9)--(24,33);
\draw (0,6)--(27,33);
\draw (0,3)--(30,33);
\draw (0,0)--(33,33);
\draw (3,0)--(36,33);
\draw (6,0)--(39,33);
\draw (9,0)--(39,30);
\draw (12,0)--(39,27);
\draw (15,0)--(39,24);
\draw (18,0)--(39,21);

\filldraw (0,0) circle(6pt);
\filldraw (0,18) circle(6pt);

\draw (-2,0) node {$(0,0)$};
\draw (-2,18) node {$(0,18)$};

\end{tikzpicture}
\caption{The hyperplane arrangement $\Sigma_\Theta$ for the collection of $(3,2)$-invariant subsets from Examples \ref{example: min reps d=3}, \ref{example: shift (0,18)}, and \ref{ex: admiss reps d=3}. The lattice of dots are the integer shifts with different remainders modulo $3.$ They correspond to the $(9,6)$-invariant subsets. The red dots correspond to the admissible subsets, and the blue dots correspond to the minimal representatives that are not admissible. The three shaded regions are the regions considered in Examples, and the bold black dots are the minimal shifts on the boundaries of those regions.}\label{figure: hyperplane arrangement}
\end{figure}
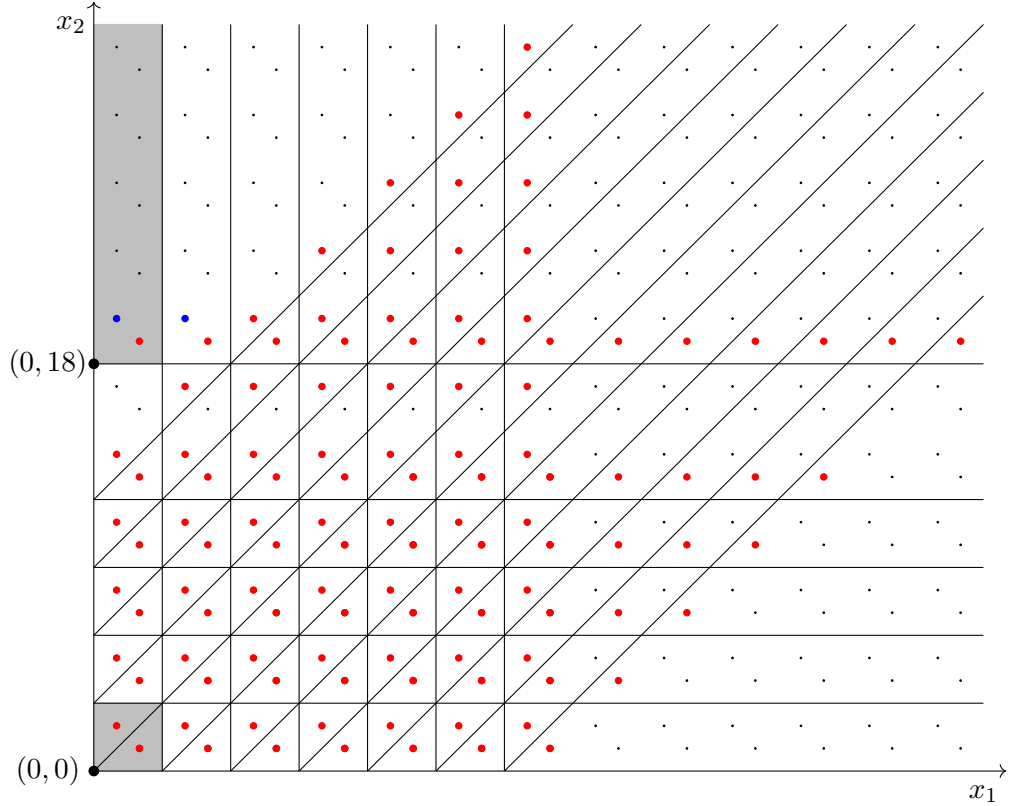

\begin{definition}
We call the graph $G$ described above the \textit{intersection graph} of the skeletons.
\end{definition}

Let $\sigma=(\sigma_0,\ldots,\sigma_l)\in S_{l+1}$ be a permutation, such that $\Theta_{\sigma_0},\ldots,\Theta_{\sigma_l}$ are weakly increasing in the partial order defined by the acyclic orientation on $G$ (in particular, $\sigma_0=0$). Then 

\begin{equation*}
\Delta(m_0+\sigma_0,m_1+\sigma_1,\ldots,m_l+\sigma_l)=\bigcup d\Theta_k+\sigma_k
\end{equation*}
is a $(dn,dm)$-invariant subset, such that the corresponding shift belongs to the region $R.$ In particular, it follows that $R$ corresponds to a non-empty equivalence class. The subsets constructed as above are called the {\em minimal representatives} of the class corresponding to $R.$ Note that there might be more than one minimal representative: they are parametrized by the linearizations of the partial order on $\Theta$ given by the acyclic orientation on $G$.

It follows also that if $(l_1,\ldots,l_l)\in d\mathbb{Z}_{\ge 0}^l\subset\mathbb{R}_{\ge 0}^l$ is a shift, such that the corresponding intersection graph of skeletons is connected, then this shift is the minimal point of at least one of the regions. The regions with that minimal point are then parametrized by the acyclic orientations of the graph, such that $\Theta_0$ is the unique source.

\begin{example}\label{example: min reps d=3}
Let $(n,m)=(3,2)$ and $d=3.$ Let also
\begin{align*}
\Theta^0_0&=\Theta^0_2=\{0,2,3,\ldots\}=\mathbb{Z}_{\ge 0}\setminus \{1\},\\
\Theta^0_1&=\{0,1,2,\ldots\}=\mathbb{Z}_{\ge 0},
\end{align*}
so that 
\begin{align*}
S^0_0&=S^0_2=\{-2,0,1,2,4\},\\
S^0_1&=\{-2,-1,0,1,2\}.
\end{align*}
In Figure \ref{figure: hyperplane arrangement} we present the corresponding hyperplane arrangement $\Sigma_\Theta.$

For the zero shift $(m_1,m_2)=(0,0)$ all three skeletons pairwise intersect. Therefore, the intersection graph is complete and, in particular, connected. There are two ways to orient the graph so that $S_0^0$ is the unique source: the edges connecting $S_0^0$ to $S_1^0$ and $S_2^0$ have to be oriented pointing away from $S_0^0,$ but the edge connecting $S_1^0$ and $S_2^0$ can be oriented either way. Therefore, this is the minimal point on the boundary of two regions. Both orientations determine linear orders, therefore the corresponding regions contain unique minimal representatives:
\begin{equation*}
3\Theta_0^0\sqcup(3\Theta_1^0+2)\sqcup(3\Theta_2^0+1)=\{0,1,2,5,6,\ldots\}=\mathbb{Z}_{\ge 0}\setminus\{3,4\},
\end{equation*}
for $S_2^0\to S_1^0,$ and
\begin{equation*}
3\Theta_0^0\sqcup(3\Theta_1^0+1)\sqcup(3\Theta_2^0+2)=\{0,1,2,4,6,\ldots\}=\mathbb{Z}_{\ge 0}\setminus\{3,5\},
\end{equation*}
for $S_1^0\to S_2^0.$ (See the two shaded  triangular regions adjacent to the origin in Figure \ref{figure: hyperplane arrangement}.)
\end{example}

\begin{example}\label{example: shift (0,18)}
Consider now the shift $(m_1,m_2)=(0,3\times 6)=(0,18).$ One gets
\begin{align*}
S_0&=S^0_0=\{-2,0,1,2,4\},\\
S_1&=S^0_1=\{-2,-1,0,1,2\},\\
S_2&=S^0_2+6=\{4,6,7,8,10\}.
\end{align*}
Note that $S_1\cap S_2=\emptyset$ while both $S_1$ and $S_2$ have non-trivial intersections with $S_0.$ The corresponding graph has a unique orientation, such that $S_0$ is the unique source. (See Figure \ref{Figure: graph d=3} on the left for the graph, and the shaded region adjacent to the point $(0,18)$ in Figure \ref{figure: hyperplane arrangement} for the corresponding region.)

\begin{figure}
\begin{center}

\begin{tikzpicture}[
        > = stealth, 
            shorten > = 2pt, 
            shorten < = 2pt, 
            auto,
            semithick 
        ]
\tikzset{myptr/.style={decoration={markings,mark=at position .75 with
    {\arrow[scale=2,>=stealth]{>}}},postaction={decorate}}}
\tikzset{state/.style={circle,draw,minimum size=2mm, inner sep=0pt}, }

    \node[state, label=right:{ \{-2,0,1,2,4\}}] (0) {};
    \node[state, label=left: { 
\{4,6,7,8,10\}}] (1) [above left of=0, xshift=-4mm, yshift=15mm]  {};
    \node[state, label=right:{
\{-2,-1,0,1,2\}}] (2) [above right of=0, xshift=4mm, yshift=15mm] {};

    \draw[myptr] (0) -- (2);
    \draw[myptr] (0) -- (1);

\end{tikzpicture}
\begin{tikzpicture}[
        > = stealth, 
            shorten > = 2pt, 
            shorten < = 2pt, 
            auto,
            semithick 
        ]
\tikzset{myptr/.style={decoration={markings,mark=at position .75 with
    {\arrow[scale=2,>=stealth]{>}}},postaction={decorate}}}
\tikzset{state/.style={circle,draw,minimum size=2mm, inner sep=0pt}, }

    \node[state, label=right:{ $\mathbb{Z}_{\ge 0}\setminus \{1\}$}] (0) {};
    \node[state, label=left: { 
$\mathbb{Z}_{\ge 6}\setminus \{7\}$}] (1) [above left of=0, xshift=-4mm, yshift=15mm]  {};
    \node[state, label=right:{
$\mathbb{Z}_{\ge 0}$}] (2) [above right of=0, xshift=4mm, yshift=15mm] {};

    \draw[color=blue, myptr] (0) -- (2);
    \draw[color=blue, myptr] (0) -- (1);
    \draw[color=green, myptr] (1) -- (2);

\end{tikzpicture}
\end{center}
\caption{On the left: the digraph $G$ corresponding to the minimal point on the boundary of a region with vertices labeled by the skeletons. On the right: the corresponding bicolored digraph with vertices labeled by the $(2,3)$-invariant subsets.} 
\label{Figure: graph d=3}

\end{figure}
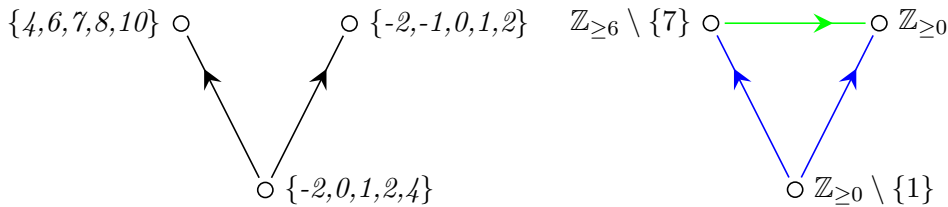
The permutation $\sigma$ in this case can be either $(0,1,2)$ or $(0,2,1),$ giving rise to two minimal representatives:

\begin{equation*}
3\Theta_0^0\sqcup (3\Theta_1^0+1)\sqcup (3\Theta_2^0+18+2)=\mathbb{Z}_{\ge 0}\setminus\{2,3,5,8,11,14,17,23\},
\end{equation*}
for $\sigma=(0,1,2),$ and
\begin{equation*}
3\Theta_0^0\sqcup (3\Theta_1^0+2)\sqcup (3\Theta_2^0+18+1)=\mathbb{Z}_{\ge 0}\setminus\{1,3,4,7,10,13,16,22\},
\end{equation*}
for $\sigma=(0,2,1).$ (See the blue and the red points in the shaded rectangular region in Figure \ref{figure: hyperplane arrangement}.)
\end{example}

\subsection{Admissible representatives}

Let $\Delta$ be a $0$-normalized $(dn,dm)$-invariant subset such that
\begin{align*}
&\Delta\cap (d\mathbb{Z}+r)\neq \emptyset, r\in\{0,1,\ldots,l\},\\
&\Delta\cap (d\mathbb{Z}+r)= \emptyset, r\in\{l+1,l+1,\ldots,d-1\}.
\end{align*}
In the spirit of the above notations, let 

\begin{equation*}
\Delta=\bigcup_{k} d\Theta_k^0 + x_k + k,
\end{equation*} 
where $(x_1,\ldots,x_l)\in d\mathbb{Z}_{\ge 0}^l\subset\mathbb{R}_{\ge 0}^l,$ $x_0=0,$ and $\Theta_0^0,\ldots,\Theta_l^0$ are $0$-normalized $(n,m)$-invariant subsets. Denote $\Theta_k:=\Theta_k^0+\frac{x_k}{d}$ for all $k\in\{0,\ldots, l\}.$  

One can generalize the notion of an admissible subset (see Definition \ref{def: admissible} and Lemma \ref{lem:susp}) to the case of non-cofinite $(nd,md)$-invariant subsets of the type described above in a straightforward way. We get that $\Delta$ is admissible if for every $k\in\{0,\ldots,l-1\}$ one has
\begin{align*}
&\Delta\cap (d\mathbb{Z}+k+1)\nsubseteq \Delta\cap (d\mathbb{Z}+k)+dn+dm+1,\\
&\Leftrightarrow\quad d\Theta_{k+1}+k+1\nsubseteq d\Theta_k+k+dn+dm+1,\\
&\Leftrightarrow\quad \Theta_{k+1}\nsubseteq \Theta_k+n+m.
\end{align*}

In the view of Lemma \ref{lemma: intersect vs contain}, this is also equivalent to saying that $\Delta$ is admissible if and only if for every $k\in\{0,\ldots,l-1\}$ either $S_k\cap S_{k+1}\neq\emptyset$ or $\Theta_k\subset \Theta_{k+1}+n+m,$ where $S_0,\ldots,S_l$ are the skeletons of $\Theta_0,\ldots,\Theta_l.$

\begin{lemma}\label{lemma:admissible vs minimal}
If $\Delta$ is admissible then it is a minimal representative of an equivalence class.
\end{lemma}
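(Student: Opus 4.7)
The plan is to exhibit $\Delta$ as the output of the minimal-representative construction of Section 4.3, by directly extracting a permutation $\sigma$ and abstract labeling from the admissibility data. The first step is to translate admissibility via Lemma \ref{lemma: intersect vs contain}: for each $k \in \{0, \ldots, l-1\}$, the hypothesis $\Theta_{k+1} \not\subset \Theta_k + n + m$ is equivalent to exactly one of the two mutually exclusive scenarios, (i) $S_k \cap S_{k+1} \neq \emptyset$ (an intersection edge), or (ii) $\Theta_k \subset \Theta_{k+1} + n + m$ (an oriented containment edge $k \to k+1$ in the bicolored graph of Figure \ref{Figure: graph d=3}). In either case, the pair $(k, k+1)$ contributes an edge of the bicolored graph, which therefore contains the spanning path $0 - 1 - \cdots - l$ and is in particular connected.

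Next, I would orient each intersection edge by the shift comparison (``larger shift receives the arrow''), breaking ties so that $\Theta_0$ remains the unique source. This is always possible, since $x_0 = 0$ is the global minimum of the shifts and any scenario (ii) edge goes $k \to k+1$ with $k \ge 1$, so cannot flow into $\Theta_0$. A topological sort of the resulting acyclic orientation produces a permutation $\sigma \in S_{l+1}$ with $\sigma_0 = 0$ linearizing it; setting $\Theta_k^{\mathrm{abs},0} := \Theta_{\sigma_k}^0$ and $m_k := x_{\sigma_k}$, a direct check yields $\Delta = \bigcup_k d\Theta_k^{\mathrm{abs},0} + m_k + \sigma_k$, placing $\Delta$ in the image of the construction. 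Minimality of the shift vector $(m_1, \ldots, m_l)$ within its region then follows from the two scenarios: scenario (i) puts a wall of $\Sigma_\Theta$ in the way of any attempted descent, while scenario (ii) provides the inequality $x_k \ge x_{k+1} + d(n + m)$ (by comparison of minima of $0$-normalized invariant subsets) which, combined with $x_0 = 0$, propagates from $\Theta_0$ along the bicolored spanning path to pin all shifts from below.

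The main obstacle is reconciling the two orientation conventions in the bicolored graph: the shift-based orientation used for scenario (i) intersection edges versus the containment-based orientation for scenario (ii) green edges, which in fact point from higher-shift $\Theta_k$ to lower-shift $\Theta_{k+1}$ (opposite to the shift direction). The cleanest way to handle this is to work entirely with the shift-based orientation on intersection edges, treating containment edges as recording strict shift inequalities $x_k > x_{k+1}$ that are automatically consistent with the acyclic structure and ensure $\Theta_0$ is the unique source. The verification that the combined structure is acyclic then reduces to tracing the bicolored spanning path from $\Theta_0$ outward, using admissibility to rule out any back-edge.
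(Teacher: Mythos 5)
There is a genuine gap. The reduction in the paper (stated at the start of its proof and justified in Section 4.3) is that $(x_1,\ldots,x_l)$ is the minimal point of its region if and only if the \emph{intersection graph} — the graph whose edges are the pairs $(i,j)$ with $S_i\cap S_j\neq\emptyset$ — is connected. Your argument only establishes that every consecutive pair $(k,k+1)$ carries either a blue (intersection) edge or a green (containment) edge, and then notes that the bicolored graph is connected. But the bicolored graph is connected \emph{a priori}: by Lemma \ref{lemma: intersect vs contain} it is a complete graph, so its connectedness carries no information. What needs to be shown is connectedness of the blue subgraph alone, and a chain $0-1-\cdots-l$ of mixed-color edges does not give that.

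Your second paragraph tries to substitute a direct minimality argument, but the inequality $x_k\ge x_{k+1}+d(n+m)$ produced by scenario (ii) is of the wrong shape: it bounds $x_k$ from below in terms of $x_{k+1}$, not the reverse. It therefore does not ``pin'' $x_{k+1}$ from below; in the absence of a blue edge (wall) one could a priori decrease $x_{k+1}$ freely, and a green consecutive edge supplies no wall of $\Sigma_\Theta$. The missing ingredient is the geometric step the paper uses: if the intersection graph were disconnected, take the smallest index $k$ outside the component of $\Theta_0$; since $0\in\Theta_0$ one gets $\Theta_k\subset\Theta_0+n+m$, and then the ``strictly above or strictly below'' trichotomy for periodic lattice paths (if $S_i\cap S_j\neq\emptyset$ while $S_k$ is disjoint from both, the path of $\Theta_k$ lies on the same side of both) propagates this containment along the component of $\Theta_0$ to give $\Theta_k\subset\Theta_{k-1}+n+m$, contradicting admissibility of the consecutive pair $(k-1,k)$. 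Without that propagation step, a non-consecutive pair with disjoint skeletons could in principle disconnect the blue graph, which your consecutive-pair dichotomy simply does not see.
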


\begin{proof}
According to the constructions above, all we need to check is that $(x_1,\ldots,x_l)$ is the minimal point on the boundary of a region, which is equivalent to showing that the intersection graph of the skeletons of $\Theta_0,\ldots,\Theta_l$ is connected. 

Indeed, suppose it is not connected. Let $k$ be the minimal number such that $\Theta_k$ is not in the same connected component as $\Theta_0.$ Note that $\Theta_0\nsubseteq \Theta_k+n+m$ because $0\in\Theta_0$ and $\min(\Theta_k+n+m)\ge n+m>0.$ Furthermore, $S_0\cap S_k=\emptyset,$ because $\Theta_k$ is not connected to $\Theta_0$ in the intersection graph. Therefore, according to Lemma \ref{lemma: intersect vs contain}, $\Theta_k\subset \Theta_0+n+m.$ 

Suppose now that $\Theta_i$ and $\Theta_j$ are connected by an edge in the intersection graph, while $\Theta_k$ is not connected to either of them. That is equivalent to saying that the periodic lattice paths corresponding to $\Theta_i$ and $\Theta_j$ intersect, while the periodic path corresponding to $\Theta_k$ does not intersect either of them. It follows then that if the periodic path corresponding to $\Theta_k$ is either strictly below both periodic paths corresponding to $\Theta_i$ and $\Theta_j,$ or it is strictly above both of them. In other words, if $\Theta_k\subset \Theta_i +n+m$ then also $\Theta_k\subset \Theta_j +n+m.$

Since $\Theta_0$ and $\Theta_{k-1}$ are in the same connected component of the intersection graph and $\Theta_k\subset \Theta_0+n+m,$ applying the argument from the previous paragraph several times, we obtain that $\Theta_k\subset\Theta_{k-1}+n+m,$ which means that $\Delta$ is not admissible. Contradiction.

\end{proof}

\begin{theorem}
\label{thm: unique admissible}
Every equivalence class contains a unique admissible representative.
\end{theorem}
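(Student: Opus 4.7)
My plan is to construct the admissible representative inductively and to show that any admissible representative must coincide with the construction.

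For existence, I would assign residues to $\Theta_0,\ldots,\Theta_l$ one at a time: place $\Theta_0$ at residue $0$, and at each step $k+1$ choose among the $G$-minimal subsets still unassigned the one that is minimum with respect to the containment preorder $\Theta_a \leq_c \Theta_b$ defined by $\Theta_a \subseteq \Theta_b + (n+m)$. The key observation making this well-defined is that $\Theta + (n+m) \subseteq \Theta$ for every $(n,m)$-invariant $\Theta$, because $n+m \in \langle n, m\rangle$; combined with Lemma \ref{lemma: intersect vs contain}, this yields both trichotomy on any pair of $G$-incomparable subsets (both containments cannot simultaneously hold, since $\Theta_a \subseteq \Theta_a + 2(n+m)$ would contradict $0 \in \Theta_a$) and transitivity of $\leq_c$ on any set of pairwise $G$-incomparable subsets, via the chain $\Theta_a \subseteq \Theta_b + (n+m) \subseteq \Theta_b$ followed by $\Theta_b \subseteq \Theta_c + (n+m)$.

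Admissibility of the construction is verified pair by pair. Let $\Theta_{(k)}$ denote the subset assigned to residue $k$ by the construction. If $\Theta_{(k)}$ and $\Theta_{(k+1)}$ are joined by a $G$-edge then their skeletons meet and no $(n+m)$-containment holds, so $\Theta_{(k+1)} \not\subseteq \Theta_{(k)} + (n+m)$ automatically. Otherwise they are $G$-incomparable, and both must already have been $G$-minimal at step $k$, since the only alternative---$\Theta_{(k+1)}$ becoming newly $G$-minimal at step $k+1$ via the removal of $\Theta_{(k)}$---would require an edge $\Theta_{(k)} \to \Theta_{(k+1)}$ in $G$, contradicting $G$-incomparability. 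The inductive rule at step $k$ therefore selected $\Theta_{(k)}$ over $\Theta_{(k+1)}$ as the $\leq_c$-minimum, forcing $\Theta_{(k)} \subseteq \Theta_{(k+1)} + (n+m)$, which by trichotomy rules out $\Theta_{(k+1)} \subseteq \Theta_{(k)} + (n+m)$.

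For uniqueness I would proceed by induction on the first position where two admissible linearizations $\tau^{(1)}, \tau^{(2)}$ disagree: if $\tau^{(1)}_{k+1} = A \neq B = \tau^{(2)}_{k+1}$, both $A, B$ are $G$-minimal remaining at this step and $G$-incomparable, so WLOG $A \subseteq B + (n+m)$, and $A$ must appear in $\tau^{(2)}$ at some position $j_1 \geq k+2$, since placing $A$ at $k+2$ would immediately violate admissibility there. The plan is then to track the intermediate placements in $\tau^{(2)}$ from positions $k+2$ through $j_1$ and to use the transitivity of $\leq_c$ on $G$-minimal remaining sets to argue that the element at position $j_1 - 1$ must $\leq_c$-dominate $A$, yielding $A \subseteq \tau^{(2)}_{j_1-1} + (n+m)$ and violating admissibility. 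The main obstacle is rigorously carrying out this propagation as the $G$-minimal remaining set evolves, with new elements becoming $G$-minimal after the placements of their predecessors; combined with Lemma \ref{lemma:admissible vs minimal}, this yields the unique admissible representative of the equivalence class.
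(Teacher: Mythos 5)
Your approach mirrors the paper's: reduce via Lemma \ref{lemma:admissible vs minimal} to the minimal representatives (i.e.\ linearizations of the intersection-digraph order), introduce the containment preorder $\leq_c$ (which is precisely the paper's ``green'' total order among skeleton-disjoint pairs), construct the admissible representative greedily by taking at each step the $\leq_c$-minimum among the blue-sources, and prove uniqueness by a minimal-counterexample argument using transitivity of $\leq_c$. The existence part of your argument is complete and correct, and the verification of admissibility pair by pair is clean.

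For uniqueness, you sketch the right plan but flag a genuine-sounding gap that is in fact a red herring. Your worry --- tracking how the set of blue-minimal remaining elements evolves --- dissolves once you make one observation you already have the ingredients for: $A$ is blue-minimal among the elements remaining at step $k+1$, hence stays blue-minimal at every subsequent step until it is placed at position $j_1$; and each $\tau^{(2)}_t$ with $k+1\le t<j_1$ is blue-minimal at step $t$. Two distinct blue-minimal elements of the same remaining set have no blue edge between them, so $A$ is blue-incomparable with \emph{every} $\tau^{(2)}_t$ in that range. This means the $\leq_c$ comparison and its transitivity (via Lemma \ref{lemma: intersect vs contain}) are available between $A$ and each intermediate without any bookkeeping about the evolving minimal set. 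The propagation then closes exactly as you intend: take the smallest $k'>k+1$ with $\tau^{(2)}_{k'}\leq_c A$; by minimality $A\leq_c\tau^{(2)}_{k'-1}$, and transitivity of containment (forcing $\Theta+(n+m)\subseteq\Theta$) gives $\tau^{(2)}_{k'}\subseteq\tau^{(2)}_{k'-1}+(n+m)$, which by Lemma \ref{lemma: intersect vs contain} also forces disjoint skeletons and hence contradicts admissibility at the consecutive pair $(k'-1,k')$; if no such $k'$ exists, then $A\leq_c\tau^{(2)}_{j_1-1}$, contradicting admissibility at $(j_1-1,j_1)$. This is the same minimal-counterexample computation the paper carries out with green edges; filling it in makes your proof complete.

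One small inaccuracy: in your trichotomy argument, you invoke $0\in\Theta_a$, but the $\Theta_k$ here are upward shifts of $0$-normalized sets, so $0$ need not lie in $\Theta_a$. The correct statement is $\min\Theta_a\geq 0$, which still gives the contradiction $\min\Theta_a\geq\min\Theta_a+2(n+m)$.
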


\begin{proof}
According to Lemma \ref{lemma:admissible vs minimal} it is enough to show that for every equivalence class exactly one of the minimal representatives is admissible. We now follow the notations from the previous section. We get the intersection digraph of the skeletons of $\Theta_0,\ldots,\Theta_{d-1},$ which is acyclic and contains the unique source $\Theta_0.$ Consider a minimal representative

\begin{equation*}
\Delta(m_0+\sigma_0,m_1+\sigma_1,\ldots,m_l+\sigma_l)=\bigcup d\Theta_k+\sigma_k.
\end{equation*}
(Here $\sigma\in S_{l+1}$ is such that $\Theta_{\sigma_0},\ldots,\Theta_{\sigma_l}$ is weakly increasing in the order defined by the orientation.) This minimal representative is admissible if and only if whenever $S_{\sigma_k}\cap S_{\sigma_{k+1}}=\emptyset,$ one has $\Theta_{\sigma_k}\subset\Theta_{\sigma_{k+1}}+n+m.$ 

Let us enhance the digraph $G$ as follows. Let's color the edges of the original digraph in blue, and whenever two vertices $\Theta_i$ and $\Theta_j$ are not connected by a blue edge, connect them by a green edge oriented in such a way that if $\Theta_i\subset \Theta_j+n+m$ then $\Theta_i\rightarrow \Theta_j.$ We obtain a complete graph with edges colored in two colors: blue and green, and oriented in such a way that
\begin{enumerate}
\item Each color is acyclic.
\item The green color is transitive, i.e. if one has green edges $\Theta_i\rightarrow \Theta_j$ and $\Theta_j\rightarrow \Theta_k,$ then one also has a green edge $\Theta_i\rightarrow \Theta_k.$ 
\end{enumerate}

The proof of the Theorem is reduced to the following claim: such a graph contains a unique oriented path passing through very vertex exactly once (possibly using both colors) and such that it is weakly monotone with respect to the partial order defined by the blue color.

Existence. We construct the path step by step. Suppose that $\Theta_{\sigma_0}\to\Theta_{\sigma_1}\to\ldots\to\Theta_{\sigma_k}$ is an initial segment of the path already constructed, and suppose that it satisfies the required conditions, i.e. all the consecutive edges are correctly oriented, the path is weakly monotone with respect to the blue partial order, and there are no blue edges going from one of the unused vertices to one of the used vertices (otherwise it would not be possible to extend the path to that vertex because of the monotonicity condition). Consider all sources of the restriction of the blue graph to the unused vertices. Note that it is a non-empty set unless $k=l,$ in which case we are done. These vertices can only be connected by green edges. In particular, they are totally ordered with respect to the green orientation. Let $\sigma_{k+1}$ be such that $\Theta_{\sigma_{k+1}}$ the smallest of these vertices with respect to the green order. The blue monotonicity condition for this extension of the path is satisfied by construction. All we need to check is that 
there cannot be a green arrow $\Theta_{\sigma_{k+1}}\to\Theta_{\sigma_k}.$ Indeed, if that would be the case then $\Theta_{\sigma_{k+1}}$ would have been added before $\Theta_{\sigma_k}$ according to our algorithm. We repeat this step until we obtain a path going through every vertex.

Uniqueness. Suppose that we have a path $\Theta_{\omega_0}\to\Theta_{\omega_1}\to\ldots\to\Theta_{\omega_l}$ satisfying the required conditions and different from the one constructed using the algorithm above. That means that there exist two indexes $1\le i<j\le l$ such that $\Theta_{\omega_j}$ should have been added instead of $\Theta_{\omega_i}$ according to the algorithm. In particular, it implies that $\Theta_{\omega_j}$ cannot be connected to any of the vertices $\Theta_{\omega_i},\Theta_{\omega_{i+1}},\ldots,\Theta_{\omega_{j-1}}$ by blue edges in either direction, and we have a green edge $\Theta_{\sigma_j}\to\Theta_{\sigma_i}.$ We claim that all edges connecting $\Theta_{\sigma_j}$ with the vertices $\Theta_{\omega_i},\Theta_{\omega_{i+1}},\ldots,\Theta_{\omega_{j-1}}$ are oriented FROM $\Theta_{\sigma_j}.$ Indeed, otherwise let $\Theta_{\sigma_k},\ i<k<j$ be the first vertex such that we have a green edge $\Theta_{\sigma_k}\to\Theta_{\sigma_j}.$ Then  we also have a green edge $\Theta_{\sigma_j}\to\Theta_{\sigma_{k-1}},$ and by transitivity of the green edges we get a green edge $\Theta_{\sigma_k}\to\Theta_{\sigma_{k-1}},$ which contradicts the conditions on the path.
\end{proof}

\begin{example}\label{ex: admiss reps d=3}
Continuing Example \ref{example: shift (0,18)}, note that 
\begin{align*}
\Theta_1&=\Theta^0_1=\mathbb{Z}_{\ge 0},\\
\Theta_2&=\Theta^0_2+6=\{6,8,9,\ldots\}=\mathbb{Z}_{\ge 6}\setminus \{7\}.
\end{align*}

Therefore, one gets $\Theta_2\subset \Theta_1+n+m=\Theta_1+5,$ so the green edge points from $\Theta_2$ to $\Theta_1$ (See Figure \ref{Figure: graph d=3} on the right). One concludes that the minimal representative corresponding to $\sigma=(0,2,1)$ is admissible, while the minimal representative corresponding to $\sigma=(0,1,2)$ is not (see the blue versus red dots in the shaded rectangular region in Figure \ref{figure: hyperplane arrangement}).
\end{example}

\begin{example} In Figure \ref{Figure: graph d=4} on the left we see an example of a digraph corresponding to the minimal point on the boundary of a region for $(n,m)=(2,3)$ and $d=4$ from \cite{GMVDyck}. The vertices are labeled by the skeletons of the corresponding $(2,3)$-invariant subsets. Note that there are three distinct linearizations of the partial order defined by the orientation of the graph, each corresponding to a minimal representative of the corresponding equivalence class.

On the right in Figure \ref{Figure: graph d=4} we see the corresponding bicolored digraph with vertices labeled by the $(2,3)$-invariant subsets. Indeed, one has $\mathbb{Z}_{\ge 6}\subset \mathbb{Z}_{\ge 0}+5$ and $\mathbb{Z}_{\ge 6}\setminus \{7\}\subset \mathbb{Z}_{\ge 0}+5,$ which determine the orientations of the green edges. It follows that only one of the three linearizations corresponds to an admissible subset:

\begin{align*}
\Delta=&\left[4(\mathbb{Z}_{\ge 0}\setminus \{1\})\right]\sqcup \left[4(\mathbb{Z}_{\ge 6}\setminus \{7\})+1\right]\sqcup \left[4\mathbb{Z}_{\ge 6}+2\right]\sqcup \left[4\mathbb{Z}_{\ge 0}+3\right]\\
&=\mathbb{Z}_{\ge 0}\setminus \{1,2,4,5,6,9,10,13,14,17,18,21,22,29\}.
\end{align*}

\begin{figure}
\begin{center}

\begin{tikzpicture}[
        > = stealth, 
            shorten > = 2pt, 
            shorten < = 2pt, 
            auto,
            semithick 
        ]
\tikzset{myptr/.style={decoration={markings,mark=at position .75 with
    {\arrow[scale=2,>=stealth]{>}}},postaction={decorate}}}
\tikzset{state/.style={circle,draw,minimum size=2mm, inner sep=0pt}, }

    \node[state, label=right:{ \{4,5,6,7,8\}}] (3) {};
    \node[state, label=right: { 
\{4,6,7,8,10\}}] (2) [below right  of=3, xshift=4mm, yshift=-15mm]  {};
    \node[state, label=left:{ 
\{-2,-1,0,1,2\}}] (1) [below left  of=3, xshift=-4mm, yshift=-15mm] {};
    \node[state, label=right:{
\{-2,0,1,2,4\}}] (0) [below of=3, yshift=-30mm] {};

    \draw[myptr] (2) -- (3);
    \draw[myptr] (0) -- (3);
    \draw[myptr] (0) -- (2);
    \draw[myptr] (0) -- (1);

\end{tikzpicture}
\begin{tikzpicture}[
        > = stealth, 
            shorten > = 2pt, 
            shorten < = 2pt, 
            auto,
            semithick 
        ]
\tikzset{myptr/.style={decoration={markings,mark=at position .75 with
    {\arrow[scale=2,>=stealth]{>}}},postaction={decorate}}}
\tikzset{state/.style={circle,draw,minimum size=2mm, inner sep=0pt}, }

    \node[state, label=right:{ $\mathbb{Z}_{\ge 6}$}] (3) {};
    \node[state, label=right: { 
$\mathbb{Z}_{\ge 6}\setminus \{7\}$}] (2) [below right  of=3, xshift=4mm, yshift=-15mm]  {};
    \node[state, label=left:{ 
$\mathbb{Z}_{\ge 0}$}] (1) [below left  of=3, xshift=-4mm, yshift=-15mm] {};
    \node[state, label=right:{
$\mathbb{Z}_{\ge 0}\setminus \{1\}$}] (0) [below of=3, yshift=-30mm] {};

    \draw[color=blue, myptr] (2) -- (3);
    \draw[color=blue, myptr] (0) -- (3);
    \draw[color=blue, myptr] (0) -- (2);
    \draw[color=blue, myptr] (0) -- (1);
    \draw[color=green, myptr] (2) -- (1);
    \draw[color=green, myptr] (3) -- (1);

\end{tikzpicture}
\end{center}
\caption{On the left: the digraph with vertices labeled by the skeletons of the $(3,2)$-invariant subsets. On the right: the corresponding bicolored graph with vertices labeled by the $(2,3)$-invariant subsets.
\label{Figure: graph d=4}
}
\end{figure}
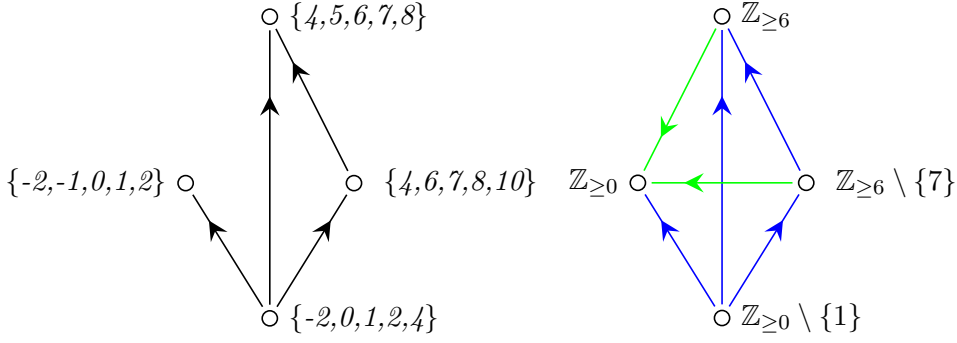

\end{example}

Let $\Inv(nd,md)^{l+1}$ be the set of $0$-normalized $(nd,md)$-invariant subsets such that 
\begin{align*}
&\Delta\cap (d\mathbb{Z}+r)\neq \emptyset, r\in\{0,1,\ldots,l\},\\
&\Delta\cap (d\mathbb{Z}+r)= \emptyset, r\in\{l+1,l+1,\ldots,d-1\},
\end{align*}
as above. The following Theorem is the main result of \cite{GMVDyck}:

\begin{theorem}[\cite{GMVDyck}]
\label{thm: GMVDyck}
There exists a bijection $\mathcal{D}:\Inv(nd,md)^{l+1}/\sim \to \Dyck(n(l+1),m(l+1)),$ where $\sim$ is the equivalence relation defined above, and $\Dyck(n(l+1),m(l+1))$ is the set of $(n(l+1),m(l+1))$-Dyck paths. In addition, in the cofinite case $l=d-1$ one gets
\begin{equation*}
\dim \Delta=\delta - \dinv(\mathcal{D}(\Delta))
\end{equation*}
for all $\Delta\in \Inv(nd,md):=\Inv(nd,md)^d.$
\end{theorem}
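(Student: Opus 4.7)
The approach is to construct the bijection via the parametrization of equivalence classes developed in the preceding subsections. Each equivalence class in $\Inv(nd,md)^{l+1}/\sim$ is determined by the data of $(l+1)$ zero-normalized $(n,m)$-invariant subsets $\Theta_0^0, \ldots, \Theta_l^0$, nonnegative integer shifts $(m_1, \ldots, m_l) \in d\mathbb{Z}_{\ge 0}^l$ such that the intersection graph of skeletons is connected, together with an acyclic orientation of that graph having $\Theta_0$ as the unique source. My plan is to encode this data directly as an $(n(l+1), m(l+1))$-Dyck path by weaving together the periodic lattice paths attached to the $\Theta_k^0$.

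First I would assemble the periodic lattice paths. Each $\Theta_k^0$ determines a periodic $(n,m)$-lattice path as in Figure \ref{Figure:periodic path}, and the union $\bigcup_k (d\Theta_k^0 + x_k + k)$ determines a periodic $(nd,md)$-path which is the boundary for $\Delta$. The $(l+1)$ component paths, rescaled and interlaced according to the acyclic orientation (which dictates which of two crossing paths passes ``above'' the other at each intersection point guaranteed by Lemma \ref{lemma: skeleton vs periodic path}), should fit inside an $n(l+1) \times m(l+1)$ rectangle as a Dyck path. A careful local analysis at every intersection, together with minimality of the shifts, shows that the interlaced path is uniquely determined by the equivalence class, producing $\mathcal{D}(\Delta)$.

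Next I would prove bijectivity. Injectivity uses that an $(n(l+1), m(l+1))$-Dyck path can be decomposed modulo $l+1$ into $l+1$ layers, each of which is an $(n,m)$-periodic path; the layers recover the $\Theta_k^0$ up to relabeling, and their relative heights recover the acyclic orientation on the intersection graph. Surjectivity follows by reversing this layer decomposition on any given Dyck path and checking that the resulting data always satisfies the minimality and acyclicity constraints, via Lemma \ref{lemma: intersect vs contain}.

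For the dimension formula in the cofinite case $l=d-1$, I would use Theorem \ref{thm: unique admissible} to replace $\Delta$ by its unique admissible representative without changing the equivalence class, and then invoke Lemma \ref{lem: dim gen cogen} to see that $\dim\Delta$ equals the number of pairs $(a,b)$ with $a$ an $(nd)$-generator, $b$ an $(md)$-cogenerator, and $a<b$. The main obstacle is identifying these pairs with the ``$\delta-\dinv$'' cells of the corresponding Dyck path, since $nd$ and $md$ are not coprime and the classical $\dinv$ definition must be adapted accordingly. I would reduce to the coprime setting by means of the layer decomposition: within each layer the identification is the classical Gorsky-Mazin correspondence between $(n,m)$-invariant subsets and $(n,m)$-Dyck paths, while the cross-layer contributions are controlled by a local count at each edge of the oriented intersection graph $G$, whose edges correspond precisely to the generator/cogenerator pairs that are split across two different layers.
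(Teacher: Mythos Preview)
The paper does not prove this theorem. It is stated as a quotation of the main result of \cite{GMVDyck} (an earlier paper by some of the same authors), accompanied only by a remark that the construction there extends from the cofinite case $l=d-1$ to smaller $l$ without change. There is therefore no proof in the present paper to compare your proposal against.

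That said, your sketch for the dimension formula has a structural awkwardness worth flagging. You invoke Theorem \ref{thm: unique admissible} to pass to an admissible representative before applying Lemma \ref{lem: dim gen cogen}. But admissibility and Theorem \ref{thm: unique admissible} are contributions of the \emph{present} paper; they cannot have figured in the argument of \cite{GMVDyck}, and in this paper Theorem \ref{thm: GMVDyck} is used in the proof of Theorem \ref{thm: intro main} \emph{alongside} Theorem \ref{thm: unique admissible}, not as a consequence of it. Your route is not circular, since Theorem \ref{thm: unique admissible} is established here independently of Theorem \ref{thm: GMVDyck}, but it is anachronistic as a reconstruction of the cited proof and also unnecessary: the quantity $\dim\Delta$ (the number of $(nd)$-generator/$(md)$-cogenerator inversions appearing in Lemma \ref{lem: dim gen cogen}) is already a well-defined invariant of the equivalence class without any appeal to admissibility, and the matching with $\delta-\dinv$ in \cite{GMVDyck} must proceed directly from that combinatorial description.
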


\begin{remark}
In \cite{GMVDyck} only the cofinite case $l=d-1$ is considered, but the construction of the bijection $\mathcal{D}$ generalizes to the non-cofinite case $l<d-1$ without any change.
\end{remark}

\subsection{Proof of Theorem \ref{thm: intro main}}

We combine all of the above results to prove Theorem  \ref{thm: intro main}. The compactified Jacobian $\overline{JC}$ is stratified into locally closed subsets  $J_{\Delta}$ and by Proposition \ref{prop:adm-tria} they are isomorphic to affine spaces of dimension $\dim(\Delta)$  if $\Delta$ is admissible and empty otherwise. Therefore the Poincar\'e polynomial has the form
$$
P(T)=\sum_{\Delta\ \mathrm{admissible}}T^{2\dim(\Delta)}.
$$
Next, we consider the infinite set $\Inv(nd,md)$ of all $(nd,md)$-invariant subsets in $\Z_{\ge 0}$ and the equivalence relation on it. By Theorem \ref{thm: unique admissible} in each equivalence class there is a unique admissible representative. Furthermore, by Lemma \ref{lem: dim gen cogen} the ``combinatorial dimension" $\dim(\Delta)$ depends only on the order of generators and cogenerators, and hence is constant on each equivalence class. Therefore we can write
$$
P(T)=\sum_{\Delta\in \Inv(nd,md)/\sim}T^{2\dim(\Delta)}.
$$
Finally, by Theorem \ref{thm: GMVDyck} there is a bijection between the equivalence classes and Dyck paths in $(nd)\times (md)$ rectangle, and the statistic $\dim(\Delta)$ on the former corresponds to the statistic $\codinv=\delta-\dinv$ on the latter, so
$$
P(T)=\sum_{D\in \Dyck(nd,md)}T^{2(\delta-\dinv(d))}.
$$

\section{Rational Shuffle Theorem and generic curves}
\label{sec: daha}

\subsection{Elliptic Hall algebra}

We briefly recall some notations for the elliptic Hall algebra \cite{BS}, and refer the reader to \cite{BGLX,GN,Mellit, Negut} for more precise statements and details.

Let $\Lambda=\Lambda_{Q,T}$ be the ring of symmetric functions in infinitely many variables with coefficients in $\mathbb{Q}(Q,T)$. The elliptic Hall algebra $\cE$ acts on $\Lambda$ and satisfies the following properties:
\begin{enumerate}
\item The multiplication operators by power sum symmetric functions $p_k$ correspond (up to a scalar) to the generators $P_{k,0}$ of $\cE$.
\item By expanding an arbitrary symmetric function $f$ in $p_k$ and replacing them by $P_{k,0}$, one associates to $f$ an operator in $\cE$.
\item The universal cover of the group $\mathrm{SL}(2,\Z)$ acts on $\cE$ by automorphisms. For coprime $m$ and $n$ we denote by $\gamma_{n,m}$ an element of $\mathrm{SL}(2,\Z)$ such that $\gamma_{n,m}(1,0)=(n,m)$, and write $P_{kn,km}=\gamma_{n,m}(P_{k,0})$. Note that the choice of $\gamma_{n,m}$ is not unique, but different choices lead to the same operators, up to a scalar. The algebra $\cE$ is generated by $P_{kn,km}$ for all possible $(kn,km)$.
\item  The algebra $\cE$ is graded, and the grading is compatible with the grading on $\Lambda$. If $A\in \cE$ is a degree $d$ operator then it shifts the degrees on $\Lambda$ by $d$, in particular $A(1)$ is a symmetric function of degree $d$.
The generator $P_{kn,km}$ has degree $kn$.
\item For $n=0$, the generators $P_{0,m}$ have degree 0 and correspond to (generalized) Macdonald operators on $\Lambda$ which 
have modified Macdonald polynomials as eigenvectors. 
\item One has $P_{kn,km+kn}=\nabla P_{kn,km} \nabla^{-1}$, where $\nabla$ is a certain degree-preserving operator on symmetric functions which is diagonal in the modified Macdonald basis.
\end{enumerate}

We also note that $\cE$ and its representation $\Lambda$ can be thought of as limits of spherical double affine Hecke algebras  and their representations in symmetric polynomials \cite{SV}. We refer to \cite{GN} for the precise normalization conventions for this limit, which we implicitly use here. In particular, the Cherednik-Danilenko conjecture is formulated in terms of DAHA, but we reformulate it here in terms of the elliptic Hall algebra.

\subsection{Rational Shuffle Theorem}

We can use the above notations to explore a special case of the Compositional Rational Shuffle Theorem conjectured in \cite{BGLX} and proved in \cite{Mellit}, see also \cite{BHMPSline}.

Let 
$$
S_i=S_i(nd,md)=\left\lceil\frac{im}{n}\right\rceil-\left\lceil\frac{(i-1)m}{n}\right\rceil,\ 1\le i\le nd.
$$
 
\begin{theorem}
\label{thm: daha}
Consider the operator $\gamma_{n,m}(e_d)$ in $\cE$, and the symmetric function $\gamma_{n,m}(e_d)(1)$.
We have
$$
C_{md,nd}(Q,T)=(\gamma_{n,m}(e_d)(1),e_{nd})=
$$
$$
\sum_{\mathbf{T}\in \SYT(nd)}\frac{z_1^{S_1}\cdots z_{nd}^{S_{nd}}}{\prod_{i=2}^{nd}(1-z_i^{-1})(1-QTz_{i-1}/z_i)}\prod_{i<j}\frac{(1-z_i/z_j)(1-QTz_i/z_j)}{(1-Qz_i/z_j)(1-Tz_i/z_j)}
$$
where $C_{md,nd}(Q,T)$ are defined by \eqref{eq: def qtcat} and  $z_i$ is the $(Q,T)$-content of a box labeled by $i$ in the standard Young tableau (SYT) $\mathbf{T}$ of size $nd$.
\end{theorem}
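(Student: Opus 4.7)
The plan is to derive both identities in Theorem 5.1 from the Compositional Rational Shuffle Theorem (conjectured in BGLX and proved by Mellit), combined with Negut's shuffle/residue realization of the elliptic Hall algebra.

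For the first equality $C_{md,nd}(q,t)=(\gamma_{n,m}(e_d)(1),e_{nd})$, I would invoke Mellit's theorem directly. It expresses $\gamma_{n,m}(e_d)(1)$ as a sum over rational $(nd,md)$-parking functions, each weighted by $q^{\area} t^{\dinv}$ times an $LLT$ symmetric function encoding the labels. Pairing with $e_{nd}$ via the Hall inner product extracts the alternating (``fermionic'') component, which collapses each labeling orbit to its underlying unlabeled Dyck path, yielding $\sum_{D\in\Dyck(nd,md)} q^{\area(D)} t^{\dinv(D)} = C_{md,nd}(q,t)$ by definition.

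For the second equality, I would use Negut's shuffle embedding of $\cE$, under which $\gamma_{n,m}(e_d)$ is identified with an explicit rational function in $nd$ variables: a leading monomial $z_1^{S_1}\cdots z_{nd}^{S_{nd}}$, with the slopes $S_i$ encoding the $(n,m)$-staircase cut out by the diagonal of the $(nd)\times(md)$ rectangle, multiplied by a shuffle kernel of the form $\prod_{i<j}(1-z_i/z_j)(1-qtz_i/z_j)/[(1-qz_i/z_j)(1-tz_i/z_j)]$. Computing $\gamma_{n,m}(e_d)(1)$ and then pairing with $e_{nd}$ reduces to an iterated residue calculation in which the poles are dictated by the factors $(1-z_i^{-1})$ and $(1-qtz_{i-1}/z_i)$. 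The admissible sequences of poles are in bijection with standard Young tableaux $T$ of size $nd$: the variable $z_i$ is specialized to the $(q,t)$-content of the box labeled $i$ in $T$, corresponding to the successive ``add-a-box'' procedure. Summing the resulting residues over all $T\in\SYT(nd)$ yields the stated formula.

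The main technical obstacle is the bookkeeping of normalizations. The element $\gamma_{n,m}(e_d)$ is defined only up to a scalar depending on the choice of lift $\gamma_{n,m}\in\widetilde{\mathrm{SL}}(2,\Z)$, and the DAHA/elliptic Hall limit conventions of GN must be matched carefully to those in Mellit and Negut; in particular, one must track the precise sign and scalar relating the shuffle element of slope $m/n$ in degree $nd$ with the image of $e_d$ under $\gamma_{n,m}$. Once these conventions are aligned, no new mathematical input beyond the cited results is required, and both equalities follow by direct substitution.
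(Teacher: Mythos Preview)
Your proposal is correct and matches the paper's own argument essentially verbatim: the paper also derives the first equality from the BGLX Compositional Rational Shuffle Conjecture as proved by Mellit (noting that pairing with $e_{nd}$ extracts the Dyck path sum), and the explicit SYT formula from Negu\cb{t}'s shuffle/residue realization of $\gamma_{n,m}(e_d)(1)$. Your additional commentary on how pairing with $e_{nd}$ collapses the parking-function labeling and on the normalization issues is accurate and helpful, but does not constitute a different route.
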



\begin{proof}
The identity $C_{md,nd}(Q,T)=(\gamma_{n,m}(e_d)(1),e_{nd})$ follows from \cite[Conjecture 3.1]{BGLX}\footnote{Note that \cite{BGLX} use slightly different normalization of operators which leads to additional sign.}  which is a special case of Compositional Rational Shuffle Conjecture  \cite[Conjecture 3.3]{BGLX} proved by Mellit in \cite{Mellit}.
The explicit formula for  $\gamma_{n,m}(e_d)(1)$  follows from \cite[Proposition 6.13]{Negut}.
\end{proof}

\subsection{Cherednik-Danilenko conjecture}
\label{sec: CD}

We can now explain the connection between our results and a conjecture of Cherednik and Danilenko \cite{CheD}. The latter is phrased for arbitrary algebraic plane curve singularities  with Puiseux expansion  
$$
y=b_1x^{\frac{m_1}{r_1}}+b_2x^{\frac{m_2}{r_1r_2}}+b_3x^{\frac{m_3}{r_1r_2r_3}}+\ldots,\ \quad b_i\neq 0
$$
Here we assume $\GCD(m_i,r_1\cdots r_i)=1$.
The exponents are related to characteristic pairs $(r_i,s_i)$ by the equations $m_1=s_1,m_i=s_i+r_im_{i-1}\ (i>1)$.

Given a sequence of characteristic pairs $(r_1,s_1),\ldots,(r_{\ell},s_{\ell})$, the authors of \cite{CheD} define a sequence of symmetric functions as follows. Start from degree one polynomial $f_{\ell+1}:=p_1\in \Lambda$, view it as a multiplication operator in $\cE$ and apply the automorphism $\gamma_{r_\ell,s_\ell}$. The resulting operator $\gamma_{r_\ell,s_\ell}(f_{\ell+1})=P_{r_{\ell},s_{\ell}}$ has degree $r_{\ell}$, and  we can consider the symmetric polynomial
$$
f_{\ell}=\gamma_{r_\ell,s_\ell}(f_{\ell+1})(1)=P_{r_{\ell},s_{\ell}}(1)
$$
of degree $r_{\ell}$. Similarly, one can define an operator $\gamma_{r_{\ell-1},s_{\ell-1}}(f_{\ell})$ and a symmetric function 
$$
f_{\ell-1}=\gamma_{r_{\ell-1},s_{\ell-1}}(f_{\ell})(1)
$$
of degree $r_{\ell}r_{\ell-1}$ and so on. 

\begin{conjecture}[\cite{CheD}]
\label{conj: CD}
Let $f_1$ be the symmetric function of degree $r_1\cdots r_{\ell}$ obtained by the above procedure. The specialization of
$(f_1, e_{r_1\cdots r_\ell})$ at $T=1$ is well defined and agrees with the Poincar\'e polynomial of the compactified Jacobian of a plane curve singularity with characteristic pairs $(r_1,s_1),\ldots,(r_{\ell},s_{\ell})$. 
\end{conjecture}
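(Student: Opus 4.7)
Since the conjecture as stated is open in general, the plan is to establish it for the generic curves studied in this paper, completing Theorem~\ref{thm: intro daha}(d). The strategy is to reduce the Cherednik--Danilenko recursion for these curves to Theorem~\ref{thm: intro daha}(c), which we already have from the Compositional Rational Shuffle Theorem of \cite{Mellit}.

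The first step is to identify the characteristic pairs of a generic curve. Substituting $t=x^{1/(nd)}$ into $y=t^{md}+\lambda t^{md+1}+\cdots$ yields the Puiseaux expansion
$$
y=x^{m/n}+\lambda\,x^{(md+1)/(nd)}+\cdots,
$$
which, using the recursion $m_1=s_1$, $m_i=s_i+r_im_{i-1}$, gives characteristic pairs $(r_1,s_1)=(n,m)$ and $(r_2,s_2)=(d,1)$, with $\ell=2$. Feeding this into the CD procedure yields
$$
f_3=p_1,\qquad f_2=\gamma_{d,1}(p_1)(1)=P_{d,1}(1)\in\Lambda_d,\qquad f_1=\gamma_{n,m}(f_2)(1)\in\Lambda_{nd}.
$$
The key intermediate step will be to prove the identity $P_{d,1}(1)=e_d$ in $\Lambda$ (with the normalization fixed by \cite{GN}). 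Given this, viewing $e_d$ as a multiplication operator in $\mathcal{E}$ and using that $\gamma_{n,m}$ is an algebra automorphism, one obtains $f_1=\gamma_{n,m}(e_d)(1)$, and Theorem~\ref{thm: intro daha}(c) gives
$$
(f_1,e_{nd})=(\gamma_{n,m}(e_d)(1),e_{nd})=C_{nd,md}(q,t).
$$
Specializing at $t=1$ and combining with the $(q,t)$-symmetry from Theorem~\ref{thm: intro daha}(a) and the identification of $C_{nd,md}(1,q)$ with $P_{\overline{JC}}$ from Theorem~\ref{thm: intro daha}(b) completes the argument.

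The main obstacle is precisely the identification $P_{d,1}(1)=e_d$. The underlying content is essentially known: it can be extracted from Negut's explicit formula \cite[Proposition 6.13]{Negut} for $P_{n,m}(1)$ by specializing $(n,m)=(d,1)$, or derived directly from the $\mathrm{SL}(2,\mathbb{Z})$-action by writing $\gamma_{d,1}=\gamma_{d,0}\cdot\gamma_{1,1}$ and using that the shear $\gamma_{1,1}$ acts as conjugation by $\nabla$ while $\nabla$ acts trivially on constants. However, the identity lives at the interface of three normalization conventions --- the DAHA-based formulation of \cite{CheD,CheP}, the plethystic shuffle formulation of \cite{BGLX}, and the elliptic Hall algebra formulation of \cite{Negut,Mellit} --- and the delicate part is to check that all scalar discrepancies between these conventions either cancel or reduce to $1$ after the specialization $t=1$ required by the conjecture.
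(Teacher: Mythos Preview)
Your proposal is correct and follows essentially the same route as the paper: identify the characteristic pairs of a generic curve as $(n,m)$ and $(d,1)$, compute $f_2=P_{d,1}(1)=e_d$, deduce $f_1=\gamma_{n,m}(e_d)(1)$, and then invoke Theorem~\ref{thm: daha} together with Theorem~\ref{thm: intro main}. The only difference is that the paper dispatches the key identity $P_{d,1}(1)=e_d$ by a direct citation to \cite[Corollary~6.5]{GN}, rather than extracting it from \cite{Negut} or from an $\mathrm{SL}(2,\mathbb{Z})$ factorization (and note that your proposed decomposition $\gamma_{d,1}=\gamma_{d,0}\cdot\gamma_{1,1}$ does not literally make sense, since $\gamma_{d,0}$ is not an element of $\mathrm{SL}(2,\mathbb{Z})$ for $d>1$).
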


In a recent paper \cite{KivTsai} Kivinen and Tsai proved the following:
\begin{theorem}[\cite{KivTsai}]
\label{thm: KivTsai}
The specialization of $(f_1, e_{r_1\cdots r_\ell})$ at $T=1$ from Conjecture \ref{conj: CD} is a polynomial in $Q$ which agrees with the point count of the compactified Jacobian over a finite field $\mathbb{F}_Q$.
\end{theorem}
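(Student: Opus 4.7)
The plan is to realize $\#\overline{JC}(\mathbb{F}_q)$ as an orbital integral on $GL_N(\mathbb{F}_q((x)))$, then evaluate that orbital integral by a recursive $p$-adic harmonic analysis argument organized by the characteristic pairs, and finally match the outcome with the Cherednik-Danilenko formula using the Schiffmann-Vasserot identification of (a limit of) the spherical DAHA with the elliptic Hall algebra $\mathcal{E}$.

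First, following Laumon and Ng\^o, I would identify $\overline{JC}$ with the quotient of an affine Springer fiber $\mathrm{Sp}_\gamma\subset\mathrm{Gr}_{GL_N}$ by a suitable lattice action, where $\gamma\in\mathfrak{gl}_N(F)$, $F=\mathbb{F}_q((x))$, is the multiplication-by-$y$ operator on $\mathbb{F}_q[[t]]$ viewed as a rank-$N$ module over $\mathbb{F}_q[[x]]$. By the Kazhdan-Lusztig dictionary this gives
\[
\#\overline{JC}(\mathbb{F}_q)=O_{\gamma}\bigl(\mathbf{1}_{GL_N(\mathcal{O})}\bigr),
\]
up to a normalization by the volume of the centralizer. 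Polynomiality in $q$ would then be a consequence of a rationality theorem for regular semisimple affine Springer fiber point counts (in the style of Yun), or alternatively of the fact that the orbital integral of a spherical function on a regular semisimple orbit takes values in $\mathbb{Z}[q]$.

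Second, to evaluate the orbital integral I would exploit the tower of intermediate extensions $F=F_0\subset F_1\subset\cdots\subset F_\ell=F(y)$ attached to the characteristic pairs $(r_i,s_i)$, with $F_i/F_{i-1}$ totally ramified of degree $r_i$. By a centralizer/parabolic descent argument (in the spirit of Arthur and Langlands-Shelstad-Waldspurger) the orbital integral on $GL_N(F)$ should decompose as a nested composition of orbital integrals on $GL_{r_i}(F_{i-1})$. For each individual pair $(r_i,s_i)$, the local orbital integral for a regular semisimple element of slope $s_i/r_i$ can be identified, via the Satake isomorphism and the theory of Hall-Littlewood functions, with the action of the generator $P_{r_i,s_i}=\gamma_{r_i,s_i}(p_1)$ of $\mathcal{E}$ on the spherical vector, thanks to the Schiffmann-Vasserot theorem.

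Finally, composing these across $i=\ell,\ell-1,\ldots,1$ and evaluating on the vacuum recovers the symmetric function $f_1$ from Section \ref{sec: CD}, and the pairing with $e_{r_1\cdots r_\ell}$ arises as the natural projection onto the trivial $K$-isotypic component matching $\mathbf{1}_{GL_N(\mathcal{O})}$. The main technical obstacle I anticipate is the second step: a clean formulation and proof of the nested descent of orbital integrals along the tower of ramified extensions, including careful bookkeeping of Haar measures, discriminant factors, and the non-elliptic contributions that appear at each level. A secondary subtlety is aligning the conventions of Cherednik-Danilenko (order of composition, normalization of $\nabla$, and choice of lift $\gamma_{r_i,s_i}\in\widetilde{SL}(2,\mathbb{Z})$) with whatever conventions emerge naturally from the harmonic analysis side.
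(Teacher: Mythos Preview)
This theorem is not proved in the present paper at all: it is merely quoted from \cite{KivTsai} as external input. The paper's only ``proof'' is the citation, together with the remark in the introduction that Kivinen and Tsai ``used completely different methods ($p$-adic harmonic analysis)'' to establish it. So there is nothing in this paper against which to compare your argument.

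That said, your outline is a reasonable high-level summary of the Kivinen--Tsai strategy as advertised: the point count of $\overline{JC}$ is expressed as an orbital integral via the affine Springer fiber interpretation, and then a recursive computation of Shalika germs along the tower of tamely ramified extensions determined by the characteristic pairs is matched with the iterated $\gamma_{r_i,s_i}$ operators in the elliptic Hall algebra. You are honest that what you have written is a plan rather than a proof, and you have correctly located the real work: the nested descent of orbital integrals (i.e., the inductive structure of Shalika germs in the tamely ramified case) and the normalization bookkeeping. Filling in those steps is precisely the content of \cite{KivTsai} (and relies on earlier work of Tsai \cite{Tsai}); it is not something one can derive from the machinery set up in the present paper. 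If you intend to include a proof here, you should either cite \cite{KivTsai} as the paper does, or be prepared to reproduce a substantial portion of that argument, since the descent step is genuinely nontrivial and not a formal consequence of the Satake isomorphism alone.
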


\begin{remark}
A priori, $(f_1, e_{r_1\cdots r_\ell})$ is a rational function in $Q$ and $T$. In course  of the proof of Theorem \ref{thm: KivTsai}, Kivinen and Tsai proved using Compositional Rational Shuffle Theorem  that its specialization at $T=1$ is indeed well defined. 
\end{remark}

In the case of generic curves \eqref{eq: def generic} we have
$$
y=x^{\frac{m}{n}}+\lambda x^{\frac{md+1}{nd}}+\ldots,\ m_1=m,\ r_1=n,\ m_2=md+1,\ r_2=d.
$$ 
This means that $s_1=m$ and $s_2=1$. To follow the above procedure, we first need to compute the operator
$\gamma_{r_2,s_2}(p_1)$ and the corresponding symmetric function $f_2$. By \cite[Corollary 6.5]{GN} we have
$$
f_2=\gamma_{d,1}(p_1)(1)= P_{d,1}(1)=e_d.
$$
Therefore the next symmetric function is 
$$
f_1=\gamma_{n,m}(e_d)(1),
$$
and $(f_1,e_{nd})=C_{nd,md}(Q,T)$ by Theorem \ref{thm: daha}.

We conclude that for generic curves Conjecture \ref{conj: CD} is true and follows from Theorems \ref{thm: intro main} and  \ref{thm: daha}.

\begin{remark}
Strictly speaking, we need to consider the special cases $d=1$ and $n=1$ where the singularity has only one Puiseux pair separately. For $d=1$ we have $e_1=p_1$, so $f_1=\gamma_{n,m}(p_1)(1)$ as expected.

For $n=1$ we have one Puiseux pair $(d,md+1)$. Since $\nabla (1)=1,$ we have
$$
f_1=\gamma_{1,m}(e_d)(1)=\nabla^m (e_d \cdot \nabla^{-m}(1))=\nabla^m e_d = \nabla^{m}(P_{d,1}(1))=P_{d,md+1}(1).
$$ 
\end{remark}

\section{Curves with two Puiseux pairs}

\subsection{Cabled Dyck paths and $s$-admissible subsets}

In this section we consider arbitrary plane curve singularities with two Puiseux pairs:
$$
(x(t),y(t))=(t^{nd},t^{md}+\lambda t^{md+s}+\ldots),\quad \lambda\neq 0,
$$
where $\GCD(m,n)=1$ and $\GCD(d,s)=1$. As above, any $\cO_C$-module corresponds to a $(dn,dm)$-invariant subset $\Delta\subset \Z_{\ge 0}$.  Let $J_{\Delta}$ denote the corresponding stratum in the compactified Jacobian.

\begin{definition}
\label{def: s admissible}
An $(nd,md)$-invariant subset $\Delta$ is called $s$-admissible if $s$ is not suspicious for $\Delta$.
\end{definition}

 For $s=1$ this agrees with Definition \ref{def: admissible}. 
We conjecture the following weaker version of Theorem \ref{thm: intro main}.

\begin{conjecture}
\label{conj: chi}
The Euler characteristic of the stratum $J_{\Delta}$ is given by
$$
\chi(J_{\Delta})=\begin{cases}
1 & \text{if}\ \Delta\ \text{is}\ s\text{-admissible},\\
0 & \text{if}\ \Delta\ \text{is not}\  s\text{-admissible}.
\end{cases}
$$
\end{conjecture}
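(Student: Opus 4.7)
The approach adapts Sections~\ref{sec: geometry} and~\ref{sec: combi} to the case $s>1$, in three stages: setup, necessity, and sufficiency. First I would write equations for $J_\Delta\subset\Gen=\C^{G(\Delta)}$ exactly as in Section~\ref{sec:equations-j_delta}, the only change being that now $y(t)=t^{md}+\lambda t^{md+s}+\cdots$ and the $+$-grading satisfies $\deg_+(\lambda)=s$. With this convention, the proof of Lemma~\ref{lemma:expansionl} goes through verbatim, except that the $\lambda$-terms contribute to $s^{(0)}_{a_{j,i}+dm;x}$ starting at $x=s$ rather than $x=1$.

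For necessity, suppose $\Delta$ is not $s$-admissible, so $s$ is $j$-suspicious for some $j$. The plan is to generalize Lemma~\ref{lem: admissible necessary} by extracting the coefficient of $t^{a_{j,i}+md+s}$ in the syzygy $t^{\alpha_{j,i}nd}g_{a_{j,i+1}}-y(t)g_{a_{j,i}}$. Because the intermediate coefficients of $y(t)$ at degrees $md+k$ with $1\le k<s$ vanish by assumption, the structure parallels equation \eqref{equation:gjx+}:
\[
s_{a_{j,i}+md;s}=g_{a_{j,i+1};s}-g_{a_{j,i};s}-\lambda+(\text{polynomial in }g_{a_{*,*};z}\text{ with }z<s).
\]
Summing cyclically over $i$ telescopes the first two terms, while the lower-order corrections, which come from reductions against canonical generators $g_{a_{j+k,*}}$ for $1\le k<s$, pair up and cancel in the cyclic sum (this cancellation uses $(md)$-invariance exactly as in the proof of Lemma~\ref{lemma:expansionl}). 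The residual identity $\sum_i s_{a_{j,i}+md;s}=-n\lambda\neq 0$ then forces $J_\Delta=\emptyset$.

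For sufficiency, assume $\Delta$ is $s$-admissible. I would process the equations $s_{a_{j,i}+md;x}=0$ in the partial order of Section~\ref{sec:paving-affine-spaces}. Non-suspicious equations remain triangular in the variables $g^-_{j,i;x}$ and can be used to eliminate variables one at a time, realizing $J_\Delta$ as an iterated $\C$-bundle over a residual variety (each layer contributing a factor $\chi=1$). At each $j$-suspicious level $x$, after the preceding eliminations, what remains is a small subsystem in the $g^+$-variables whose leading part is a polynomial relation with vanishing constant term, as guaranteed by $s$-admissibility. Such a relation cuts out a union of affine spaces meeting along a common hyperplane, a nodal configuration with Euler characteristic $1$, rather than a smooth $\C^*$-family with Euler characteristic $0$. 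Iterating over all levels yields $\chi(J_\Delta)=1$.

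\textbf{Main obstacle.} The critical and most delicate step is the identification in sufficiency of the residual equations at suspicious levels as nodal configurations ($\chi=1$) rather than smooth $\C^*$-configurations ($\chi=0$). The examples $J_\Delta\cong\C^*\times\C^N$ and $J_\Delta\cong\C^N\cup_{\C^{N-1}}\C^N$ mentioned in the introduction suggest that the sign of the relevant constant term---vanishing exactly under $s$-admissibility---controls the dichotomy, but confirming this structurally when several suspicious levels interact (so that the elimination from earlier levels feeds nontrivial quadratic, rather than linear, contributions into later equations) is the main technical difficulty. As a partial backup, the Kivinen-Tsai count $\chi(\overline{JC})=|\{\text{cabled Dyck paths}\}|$ together with Theorem~\ref{thm: intro s admissible} and the necessity direction above yields $\sum_{s\text{-admissible }\Delta}\chi(J_\Delta)=|\{s\text{-admissible }\Delta\}|$; a uniform upper bound $\chi(J_\Delta)\le 1$ for each admissible $\Delta$ would then close the argument.
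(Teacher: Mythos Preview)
This statement is presented in the paper as an \emph{open conjecture}, not a theorem; the paper does not prove it (beyond the case $s=1$, where it follows from Theorem~\ref{thm: intro main}, and the Piontkowski cases in Theorem~\ref{thm: piont}). So there is no ``paper's proof'' to compare against. That said, your proposal contains a genuine error that you should be aware of.

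Your necessity argument is wrong, not just incomplete. You claim that when $\Delta$ is not $s$-admissible the cyclic sum $\sum_i s_{a_{j,i}+md;s}$ collapses to $-n\lambda$, forcing $J_\Delta=\emptyset$. But the paper explicitly states (just before Conjecture~\ref{intro conj: chi} in the introduction, and in the Remark following Conjecture~\ref{conj: chi}) that for $s>1$ the stratum $J_\Delta$ can be \emph{nonempty} even when $\Delta$ is not $s$-admissible: the Cherednik--Philipp example $(t^6,t^9+t^{13})$ produces strata isomorphic to $\C^*\times\C^N$. The mistake is in your cancellation claim. For $s=1$ the argument of Lemma~\ref{lem: admissible necessary} works because $x=1$ is the minimal $+$-degree, so the seed $s^{(0)}_{a_{j,i}+md;1}=g^-_{j,i;1}-\lambda$ already equals $s_{a_{j,i}+md;1}$ with no corrections at all. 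For $s>1$ the reductions against canonical generators $g_{a_{j+k,*}}$ with $1\le k<s$ contribute genuine quadratic terms of the form $(g_{a_{j,i+1};k}-g_{a_{j,i};k})\,g_{a_{j+k,i'};s-k}$; these do \emph{not} telescope under the cyclic sum over $i$, because the index $i'$ depends on $i$ (and on whether $a_{j,i}+md+k\in\Delta$) in a noncyclic way. Lemma~\ref{lemma:expansionl} never asserts such a cancellation---it only says the correction terms are \emph{lower in the partial order}, which is a much weaker statement. The residual equation is thus $-n\lambda+P(g)=0$ for a nontrivial polynomial $P$, cutting out a hypersurface rather than the empty set.

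This also undermines your backup argument: the identity $\sum_{\Delta\ s\text{-adm}}\chi(J_\Delta)=|\{s\text{-admissible }\Delta\}|$ requires knowing $\chi(J_\Delta)=0$ for all non-$s$-admissible $\Delta$, which you have not shown (only attempted via the false emptiness claim). To salvage necessity you would need a structural reason why the hypersurface $\{-n\lambda+P(g)=0\}$ always has Euler characteristic zero, which is roughly as hard as the sufficiency problem you already flagged as the main obstacle.
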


For $s=1$ the conjecture clearly follows from Theorem \ref{thm: intro main}. Piontkowski \cite{Piont} constructed cell decompositions for several cases with $s>1$ (see Theorem \ref{thm: piont} below for details), and the conjecture follows from his results in these cases.

\begin{remark}
Unlike the case $s=1$, the geometry of the strata $J_{\Delta}$ might be more complicated. In particular, \cite[Appendix A]{CheP} lists some non-affine strata for the singularity $(t^6,t^9+t^{13})$ which corresponds to $(m,n)=(2,3),(d,s)=(3,4)$. Some of the strata $J_{\Delta}$ are isomorphic to a union of two affine spaces glued along a codimension 1 subspace - these have the Euler characteristic 1 and correspond to $4$-admissible $\Delta$. Other strata are isomorphic to $\C^*\times \C^N$ -  these have the Euler characteristic 0 and correspond to $\Delta$ which are not $4$-admissible.  
\end{remark}
 
In particular, Conjecture \ref{conj: chi} would imply that the Euler characteristic of the compactified Jacobian equals the number of $s$-admissible subsets.To understand this number, we need the following combinatorial definition.

\begin{definition}
\label{def: cabled Dyck}
A cabled Dyck path with parameters $(n,m),(d,s)$ is the following collection of data:
\begin{itemize}
\item An $(d,s)$ Dyck path $P$ called the {\em pattern}. 
\item The numbers $v_1(P),\ldots,v_k(P)$ parametrizing the lengths of vertical runs in $P$, so that $$v_1(P)+\ldots+v_k(P)=d.$$
\item An arbitrary tuple of $(v_i(P)n,v_i(P)m)$-Dyck paths, $1\le i\le k$.
\end{itemize}
\end{definition}
 
Clearly, the total number of cabled Dyck paths equals
$$
c_{(m,n),(d,s)}=\sum_{P\in \Dyck(d,s)}c_{v_1(P)n,v_1(P)m}\cdots c_{v_k(P)n,v_k(P)m}
$$
where $c_{a,b}=C_{a,b}(1,1)$ is the number of Dyck paths in $a\times b$ rectangle.

\begin{example}
For $s=1$ there is only one $(d,s)$-Dyck paths $P$ with $v_1(P)=d$, so $$c_{(n,m),(d,1)}=c_{dn,dm}.$$ 

For $d=2$ there is one pattern  with $v_1(P)=2$, and $\frac{s-1}{2}$ patterns with $v_1(P)=v_2(P)=1$. Therefore
$$
c_{(n,m),(2,s)}=c_{2n,2m}+\frac{s-1}{2}c_{n,m}^2.
$$
\end{example}

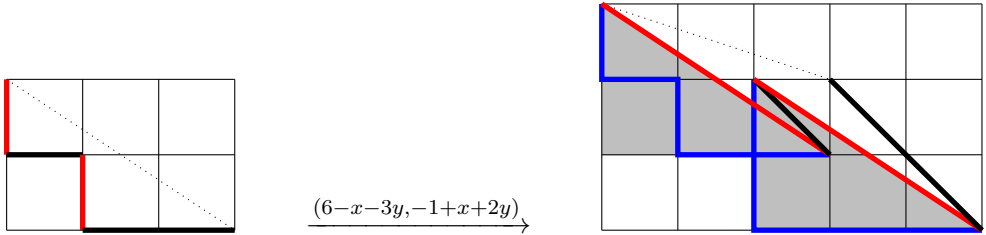
\begin{figure}[ht!]
\begin{tikzpicture}
\draw (0,0)--(0,2);
\draw (1,0)--(1,2);
\draw (2,0)--(2,2);
\draw (3,0)--(3,2);
\draw (0,0)--(3,0);
\draw (0,1)--(3,1);
\draw (0,2)--(3,2);
\draw [dotted] (0,2)--(3,0);

\draw [line width=2,red] (0,2)--(0,1);
\draw [line width=2] (0,1)--(1,1);
\draw [line width=2,red] (1,1)--(1,0);
\draw [line width=2] (1,0)--(3,0);
\end{tikzpicture}
\qquad
$\xrightarrow{(6-x-3y,-1+x+2y)}$
\qquad
\begin{tikzpicture}
\filldraw[lightgray] (0,1)--(0,3)--(3,1)--(0,1);
\filldraw[lightgray] (2,0)--(2,2)--(5,0)--(2,0);

\draw (0,0)--(0,3);
\draw (1,0)--(1,3);
\draw (2,0)--(2,3);
\draw (3,0)--(3,3);
\draw (4,0)--(4,3);
\draw (5,0)--(5,3);
\draw (0,0)--(5,0);
\draw (0,1)--(5,1);
\draw (0,2)--(5,2);
\draw (0,3)--(5,3);
\draw [dotted] (0,3)--(3,2);

\draw [line width=2,blue] (0,3)--(0,2)--(1,2)--(1,1)--(3,1);
\draw [line width=2,blue]  (2,2)--(2,0)--(5,0);

\draw [line width=2,red] (0,3)--(3,1);
\draw [line width=2] (3,1)--(2,2);
\draw [line width=2,red] (2,2)--(5,0);
\draw [line width=2] (5,0)--(3,2);
\end{tikzpicture}
\caption{Cabled Dyck path for the singularity $(t^4,t^6+t^9)$: here $(d,s)=(n,m)=(2,3)$. The pattern $P$ (left) in $(d,s)$ rectangle has $v_1=v_2=1$ and $h_1=1,h_2=2$. On the right, the same pattern after affine transformation (red and black) and a pair of $(v_im,v_in)$ Dyck paths (blue).}
\label{fig: cabled Dyck}
\end{figure}

\begin{remark}
\label{rem: sawtooth}
One can interpret the data of a cabled Dyck path geometrically as follows. First, draw the pattern $P$ in the $(d,s)$ rectangle. Then, apply an affine transformation in $\mathrm{SL}(2,\Z)$ which sends the vector $(0,1)$ to $(-m,n)$, it will transform a Dyck path to a sawtooth-like
 shape where the vertical steps $v_i$ in $P$ become diagonals in some $(v_im,v_in)$ rectangles, shown in red in Figure \ref{fig: cabled Dyck}. Now 
$(v_im,v_in)$-Dyck paths can be inscribed in $(v_im,v_in)$ rectangles below these diagonals (that is, contained in gray areas in Figure \ref{fig: cabled Dyck}). For example, one can consider the blue paths in Figure \ref{fig: cabled Dyck}.

Note that the gray areas are overlapping, so the $(v_im,v_in)$-Dyck paths might intersect in a complicated way. 
\end{remark}

\begin{theorem}
\label{thm: s admissible}
There is a bijection between the sets of $s$-admissible $(dn,dm)$-invariant  subsets and cabled Dyck paths with parameters $(n,m),(d,s)$.
\end{theorem}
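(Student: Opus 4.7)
The plan is to lift the bijection from the $s=1$ case (where Theorems~\ref{thm: unique admissible} and \ref{thm: GMVDyck} give a bijection between admissible $(nd,md)$-invariant subsets and $(nd,md)$-Dyck paths) to general $s$ by showing that an $s$-admissible subset decomposes into blocks indexed by the vertical runs of a $(d,s)$-Dyck path, with each block carrying the data of a smaller classical Dyck path of shape $(v_i n,v_i m)$.

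\textbf{Step 1. Extracting the pattern $P$.} For an $s$-admissible $\Delta$ set $\mu_j=\min\Delta_j$, $j\in\Z/d\Z$. Since $\GCD(d,s)=1$, the shift $j\mapsto j+s$ generates a single $d$-cycle on residues, and I plan to read off the sequence $\mu_0,\mu_s,\mu_{2s},\ldots$ along this cycle. By Lemma~\ref{lem: mins}, whenever $\mu_{j+s}\le\mu_j$ the number $s$ is automatically not $j$-suspicious, so no further constraint is imposed; whenever $\mu_{j+s}>\mu_j$, $s$-admissibility is a genuine requirement on the relation between $\Delta_j$ and $\Delta_{j+s}$. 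Recording a vertical step in the former case and a horizontal step in the latter should produce a lattice path in a $d\times s$ rectangle. I would then verify that this path is weakly above the diagonal (i.e.\ a genuine $(d,s)$-Dyck path) by tracking how many horizontal steps must occur before one is forced by the finite size of the orbit; the maximal runs of vertical steps have lengths $v_1,\ldots,v_k$ with $\sum v_i=d$ and define the block decomposition.

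\textbf{Step 2. Reduction to the $s=1$ case block-by-block.} Decompose $\Delta=\bigcup_j d\Theta^0_j+x_j+j$ as in Section~4.3, and for each block $B_i$ of the partition from Step~1 consider the collection $\{\Theta^0_j\}_{j\in B_i}$ with their shifts. I plan to show that the $s$-admissibility of $\Delta$, restricted to $B_i$, is equivalent to the $1$-admissibility (Definition~\ref{def: admissible}) of the corresponding non-cofinite $(v_i n, v_i m)$-invariant subset obtained by regrouping these data. By the non-cofinite version of Theorem~\ref{thm: GMVDyck} (cf.\ the remark after it) and Theorem~\ref{thm: unique admissible} applied to this smaller coprime pair, the block data corresponds to a unique $(v_i n,v_i m)$-Dyck path $D_i$.

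\textbf{Step 3. Inverse map and bijectivity.} Given a cabled Dyck path $(P;D_1,\ldots,D_k)$, I would reconstruct $\Delta$ by placing each $D_i$ into its block of residues (as prescribed by $P$) and gluing the blocks using the shifts dictated by the horizontal runs of $P$. The Dyck condition on $P$ will guarantee that the cross-block $s$-admissibility inequalities (those linking the last residue of one block to the first residue of the next along the $+s$ cycle) are satisfied. Bijectivity then follows from the bijection within each block coming from Step~2 together with the fact that $P$ is uniquely recovered from the sequence $(\mu_j)$ in Step~1.

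\textbf{Main obstacle.} The principal difficulty is Step~1--2: rigorously identifying the block decomposition of an arbitrary $s$-admissible subset and proving that the cyclic $+s$-walk on the minima $(\mu_j)$ always produces a genuine $(d,s)$-Dyck path with the correct vertical runs. One must carefully handle the cyclic arithmetic modulo $d$ (using $\GCD(d,s)=1$) and show that restriction to a single block converts the $s$-admissibility condition exactly into the $1$-admissibility condition for the pair $(v_i n,v_i m)$. Once this translation is in place, the bijection follows mechanically by applying the already-established $s=1$ case block-by-block.
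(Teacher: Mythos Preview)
Your overall architecture---walk through the residues along the $+s$ cycle, cut into blocks, and on each block invoke the $s=1$ bijection of Theorems~\ref{thm: unique admissible} and~\ref{thm: GMVDyck}---is exactly the paper's strategy. However, two concrete ingredients in your plan do not work as written, and both are repaired in the paper by a single reindexing trick that you are missing.

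First, your rule in Step~1 (``record a vertical step if $\mu_{j+s}\le\mu_j$, a horizontal step otherwise'') cannot produce a $(d,s)$-Dyck path: it outputs exactly $d$ steps, one per residue in the cycle, whereas a $(d,s)$ path has $d$ vertical and $s$ horizontal steps. The horizontal runs $h_i(P)$ are not binary decisions but \emph{integers} that must be read off from how far the minima drop. The paper's way to see this is to set $\overline{\Delta}_j:=\Delta_{sj}-(s-1)j$ and track the running minimum of $\min\overline{\Delta}_j$: the indices $t_1<t_2<\cdots$ at which this running minimum strictly drops give the block boundaries (so $v_i(P)=t_i-t_{i-1}$), and the size of each drop, via $\min\overline{\Delta}_{t_i}=t_i-q_id$, determines $h_i(P)=q_i-q_{i-1}$. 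The Dyck inequality $(h_1+\cdots+h_i)d\le s(v_1+\cdots+v_i)$ then follows from $\min\Delta_{st_i}\ge 0$, which is where $0$-normalization enters.

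Second, your Step~2 asserts that restricting $s$-admissibility to a block $B_i$ yields $1$-admissibility of the block, but this is false for the raw $\Delta_j$'s: the condition ``$s$ is $(sj)$-suspicious for $\Delta$'' is $\Delta_{s(j+1)}\subset\Delta_{sj}+md+nd+s$, not $+1$. The same shift $\overline{\Delta}_j=\Delta_{sj}-(s-1)j$ fixes this too, since it converts the inclusion to $\overline{\Delta}_{j+1}\subset\overline{\Delta}_j+md+nd+1$, which is precisely ``$1$ is $j$-suspicious for $\overline{\Delta}$''. After this translation, Corollary~\ref{cor: last ok} disposes of the wraparound residue, and within each block $[t_{i-1},t_i)$ the $1$-admissibility of $\overline{\Delta}_{t_{i-1}},\ldots,\overline{\Delta}_{t_i-1}$ is exactly what Theorems~\ref{thm: unique admissible} and~\ref{thm: GMVDyck} need. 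So your outline becomes correct once you insert the affine reparametrization $\Delta_{sj}\mapsto\Delta_{sj}-(s-1)j$ at the outset; without it both Step~1 and Step~2 fail as stated.
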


\begin{proof}
Let $\Delta=\cup_{j=0}^{d} \Delta_j$ be  $(dn,dm)$-invariant subset. 

We consider another subset 
$$
\overline{\Delta}=\cup_{j=0}^{d} \overline{\Delta}_j,\quad \overline{\Delta}_j=\Delta_{sj}-(s-1)j.
$$
Note that $\overline{\Delta}$ might have negative elements.
Since $\Delta_{sj}$ is supported in remainder $sj\mod d$, $\overline{\Delta}_j$ is supported in remainder $j\mod d$.

We claim that $\Delta$ is $s$-admissible if and only if $1$ is not $j$-suspicious for $\overline{\Delta}$, $0\le j\le d-2$.
Indeed, for $0\le j\le d-2$ it is clear from Lemma \ref{lem:susp} that $s$ is $(sj)$-suspicious for $\Delta$ if and only if
$$
\Delta_{sj+s}\subset \Delta_{sj}+md+nd+s\ \Leftrightarrow\ \overline{\Delta}_{j+1}\subset \overline{\Delta}_{j}+1,
$$
that is, $1$ is $j$-suspicious for $\overline{\Delta}$. Since $\Delta$ is $0$-normalized, we have $\min \Delta_0=0$ and $\min \Delta_{s(d-1)}>0$ so by Lemma \ref{lem: mins} $s$ is not $s(d-1)$-suspicious.

Next, we define the cabled Dyck path corresponding to $\overline{\Delta}$. Note that $\overline{\Delta}_0=\Delta_0$, so $\min \overline{\Delta}_0=0$.
Suppose that for $j=0,\ldots,t_1-1$ we have $\min \overline{\Delta}_j\ge 0$ but $\min \overline{\Delta}_{t_1}<0$, then we define $v_1(P)=t_1$. By Theorem \ref{thm: unique admissible}  the collection of subsets $\overline{\Delta}_0,\ldots,\overline{\Delta}_{t_1-1}$ defines a unique $1$-admissible collection of $(m,n)$-Dyck paths in the corresponding equivalence class, and hence by Theorem \ref{thm: GMVDyck} a unique $(t_1m,t_1n)$-Dyck path. Note that by Lemma \ref{lem: mins} the number $1$ is not $t_1$-suspicious for $\overline{\Delta}$.

Next, suppose that for $j=t_1,\ldots,t_2-1$ we have $\min \overline{\Delta}_j\ge \min \overline{\Delta}_{t_1}$ but $\min \overline{\Delta}_{t_2}<\min \overline{\Delta}_{t_1}$, then we define $v_2(P)=t_2-t_1$. As above, $\overline{\Delta}_{t_1},\ldots,\overline{\Delta}_{t_2-1}$ define a unique $((t_2-t_1)m,(t_2-t_1)n)$-Dyck path and so on. 

This defines a collection of vertical runs $v_i(P)$ with $\sum v_i(P)=d$ and a collection of Dyck paths in the $(v_i(P)m\times v_i(P)n)$ rectangles. To complete the bijection, we need to define the horizontal steps of the pattern $P$. Consider a sequence of integers $q_i$ defined by the equation
\begin{equation}
\label{eq: shift delta bar}
\min \overline{\Delta}_{t_i}=t_i-q_{i}d,
\end{equation}
then by construction we have 
$$
t_{i}-q_{i}d<t_{i-1}-q_{i-1}d,\quad (q_{i}-q_{i-1})d>t_{i}-t_{i-1}>0,
$$
hence $q_{i}>q_{i-1}$ and we can define $h_i(P)=q_{i}-q_{i-1}>0$. On the other hand,
$$
t_i-q_id=\min \Delta_{st_i}-st_i+t_i\ge t_i-st_i,
$$
therefore 
$$
(h_1(P)+\ldots+h_i(P))d=q_id\le st_i=s(v_1(P)+\ldots+v_i(P))
$$
and a lattice path $P$ with vertical steps $v_i$ and horizontal steps $h_i$ stays below the diagonal. 

Conversely, assume that we are given a cabled Dyck path with pattern $P$ and a collection of $(v_i(P)m,v_i(P)n)$ Dyck paths.
Using the vertical and horizontal steps of $P$, we can reconstruct $t_i$ and $q_i$ as above and set up $t_0=q_0=0$. By Theorems \ref{thm: GMVDyck} and \ref{thm: unique admissible} we can translate each $(v_i(P)m,v_i(P)n)$ Dyck path into a unique $1$-admissible collection of $(dn,dm)$ invariant subsets $\overline{\Delta}_{t_{i-1}},\ldots,\overline{\Delta}_{t_{i}-1}$ normalized by \eqref{eq: shift delta bar}. This determines $\overline{\Delta}$ completely. 

Finally, since $\GCD(d,s)=1$, the values of $sj\mod d$ run over all remainders modulo $d$ once, and we can uniquely define $\Delta_j$ by
$$
\Delta_{sj}=\overline{\Delta}_j+sj-j.
$$
By reversing the argument above, $\Delta$ is $s$-admissible. Assume $t_i\le j<t_{i+1}$, then
$$
\min \Delta_{sj}=\min \overline{\Delta}_{j}+(s-1)j\ge \min \overline{\Delta}_{t_i}+(s-1)t_i=t_i-q_{i}d+(s-1)t_i=st_i-q_id>0,
$$
and $\Delta$ is $0$-normalized.
\end{proof}

\begin{example}
Let us reconstruct the $(4,6)$-invariant 3-admissible subset $\Delta$  corresponding to the data of cabled Dyck paths in Figure \ref{fig: cabled Dyck} (see also Remark \ref{rem: sawtooth}). We have $(d,s)=(n,m)=(2,3)$ and $v_1=v_2=1$ and $h_1=1,h_2=2$. This means that $t_1=1,q_1=1$ and
$$
\min \overline{\Delta}_0=0,\ \min \overline{\Delta}_1=-1.
$$
Furthermore, it is easy to see that $\overline{\Delta}_0$ (corresponding to the top blue  path in Figure \ref{fig: cabled Dyck}) up to a shift 
coincides with $2\Z_{\ge 0}=\{0,2,4,6,\ldots\}$ while  $\overline{\Delta}_1$ (corresponding to the bottom blue  path in Figure \ref{fig: cabled Dyck}) up to a shift  coincides with $2(\Z_{\ge 0}\setminus \{2\}=\{0,4,6,\ldots\}$. Therefore
$$
\overline{\Delta}_0=\{0,2,4,6,\ldots\},\ \overline{\Delta}_1=\{-1,3,5,\ldots\}.
$$
Now $\Delta_0=\overline{\Delta}_0=\{0,2,4,6,\ldots\},\ \Delta_1=\overline{\Delta}_1+3-1=\{1,5,7,\ldots\}$
and
$$
\Delta=\{0,1,2,4,5,6,\ldots\}=\Z_{\ge 0}\setminus\{3\}.
$$
\end{example}

\begin{lemma}
The specialization of Conjecture \ref{conj: CD} at $Q=T=1$ for singularities of the form 
$$
(x(t),y(t))=(t^{nd},t^{md}+\lambda t^{md+s}+\ldots),\ \lambda\neq 0
$$
agrees with the number of cabled Dyck paths with parameters $(n,m),(d,s)$.
\end{lemma}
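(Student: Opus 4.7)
The plan is to reduce the identity $(f_1, e_{nd})\big|_{q=t=1} = c_{(m,n),(d,s)}$ to two results of Kivinen--Tsai \cite{KivTsai}. By Theorem~\ref{thm: KivTsai}, the polynomial $N(q) := (f_1, e_{nd})\big|_{t=1}$ coincides with $\#\overline{JC}(\mathbb{F}_q)$ for all prime powers $q$. Since $\overline{JC}$ is projective and polynomial-count, the Lefschetz trace formula gives $N(1) = \chi(\overline{JC})$. Separately, \cite{KivTsai} prove unconditionally via $p$-adic harmonic analysis that $\chi(\overline{JC})$ equals the number $c_{(m,n),(d,s)}$ of cabled Dyck paths. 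Concatenating yields
\[
(f_1, e_{nd})\big|_{q=t=1} = N(1) = \chi(\overline{JC}) = c_{(m,n),(d,s)},
\]
which is the claim.

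A more intrinsic symmetric-function proof would start from the expansion
\[
P_{d,s}(1)\big|_{q=t=1} = \sum_{P \in \Dyck(d,s)} e_{v_1(P)}\,e_{v_2(P)} \cdots e_{v_k(P)},
\]
which should follow by specializing Negu\c{t}'s SYT formula \cite[Prop.~6.13]{Negut} (or Mellit's Compositional Rational Shuffle Theorem \cite{Mellit}) at $q=t=1$; a direct verification of small cases (e.g.\ $(d,s)=(3,2)$ gives $e_3 + e_1 e_2$, which pairs with $e_3$ to give $C_{3,2}(1,1)=2$) is consistent with this guess. Applying the algebra automorphism $\gamma_{n,m}$ to the corresponding product of multiplication operators in $\cE$, evaluating on $1 \in \Lambda$, and pairing with $e_{nd}$ via the Hopf coproduct
\[
\Delta(e_{nd}) = \sum_{a_1+\ldots+a_k=nd} e_{a_1} \otimes \cdots \otimes e_{a_k}
\]
would produce $\prod_i c_{n v_i(P), m v_i(P)}$ for each $P$ by Theorem~\ref{thm: daha}, using that $\deg \gamma_{n,m}(e_{v_i})(1) = n v_i$ forces only the single term with $a_i = n v_i$ to survive. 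Summing over $P$ reproduces $c_{(m,n),(d,s)}$.

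The main obstacle in this direct route is justifying the identification $\gamma_{n,m}(e_{v_1}\cdots e_{v_k})(1) = \prod_i \gamma_{n,m}(e_{v_i})(1)$ in $\Lambda$ at $q=t=1$, equivalently that the positive-slope operators in $\cE$ specialize to multiplication operators in this classical limit. A naive argument via conjugation by $\nabla$ is deceptive, since the modified Macdonald basis degenerates at $q=t=1$ and $\nabla$ does not act trivially on the natural power-sum basis; a careful analysis of the $(q,t)$-commutation relations is required. The Kivinen--Tsai route sidesteps this entirely by transferring the combinatorial identity to their geometric point-counting argument, which is proven independently by $p$-adic methods.
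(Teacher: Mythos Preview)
Your first route via Kivinen--Tsai is correct and proves the lemma, but it is not the paper's argument. The paper gives precisely the direct symmetric-function computation you sketch as your second route. In particular, the paper asserts the expansion
\[
P_{d,s}(1)\big|_{q=t=1}=\sum_{P\in\Dyck(d,s)}e_{v_1(P)}\cdots e_{v_k(P)}
\]
(your guess is correct), and then uses exactly the multiplicativity step you flag as an obstacle: at $q=t=1$ the operators $\gamma_{n,m}(f)$ act on $\Lambda$ as multiplication operators, so $\gamma_{n,m}(fg)(1)=\gamma_{n,m}(f)(1)\cdot\gamma_{n,m}(g)(1)$. The paper does not derive this from scratch; it simply cites it as a known fact (with references to \cite{KivTsai,NegutInt}). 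After that, one pairs the product $\prod_i F_{v_i(P)n,v_i(P)m}$ against $e_{nd}$ via the coproduct on elementary symmetric functions, exactly as you describe, and Theorem~\ref{thm: daha} at $q=t=1$ gives each factor as $c_{v_i(P)n,v_i(P)m}$.

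So the comparison is: your proof is a black-box concatenation of two KivTsai theorems (point count $=$ DAHA polynomial at $t=1$; Euler characteristic $=$ number of cabled Dyck paths), whereas the paper stays entirely on the algebraic side and invokes the classical-limit multiplicativity of the elliptic Hall algebra action. Your route is shorter given the cited results, but it passes through the geometry of $\overline{JC}$, which the lemma as stated does not require; the paper's route is a self-contained symmetric-function identity. The ``obstacle'' you identify is real if one wants a proof from first principles, but for the purposes of this paper it is handled by citation rather than by your workaround.
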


\begin{proof}
We follow the notations of Section \ref{sec: CD}, there are two characteristic pairs $(n,m)$ and $(d,s)$.
We start from a symmetric function $f_2=P_{d,s}(1)$ which specializes to
$$
f_2|_{Q=T=1}=\sum_{P}e_{v_1(P)}\cdots e_{v_k(P)}.
$$
At $Q=T=1$ the operators $\gamma_{n,m}(f)$ are simply multiplication operators on $\Lambda$ (see e.g. \cite{KivTsai,NegutInt}), so 
$$
\gamma_{n,m}(fg)_{Q=T=1}(1)=\gamma_{m,n}(f)_{Q=T=1}(1)\cdot \gamma_{m,n}(g)_{Q=T=1}(1) .
$$
 Let $F_{nd,md}=\gamma_{n,m}(e_d)(1)|_{Q=T=1}$, then
$$
f_1|_{Q=T=1}=\gamma_{n,m}(f_2)(1)|_{Q=T=1}=\sum_{P}\left[\gamma_{n,m}(e_{v_1(P)})\cdots \gamma_{n,m}(e_{v_k(P)})(1)\right]|_{Q=T=1}=
$$
$$
\sum_{P}F_{v_1(P)n,v_1(P)m}\cdots F_{v_k(P)n,v_k(P)m}.
$$
Finally, by Theorem \ref{thm: daha} we have $(F_{nd,md},e_{nd})=C_{nd,md}(1,1)=c_{nd,md}$ and the result follows. 
See \cite{KivTsai} for related computations (at $Q=1$) and more details.
\end{proof}

\subsection{Piontkowski's results}
\label{sec: piont}

For the reader's convenience, we summarize the main results of \cite{Piont} for singularities with two Puiseux pairs using the terminology of this paper.  

\begin{theorem}[\cite{Piont}]
\label{thm: piont}
Suppose $d=2$, $(n,m)=(2,q),(3,4)$ or $(3,5)$ and $s$ is an arbitrary odd number. Then:
\begin{itemize}
\item[a)] If $\Delta$ is not $s$-admissible then $J_{\Delta}$ is empty.

\item[b)] If $\Delta$ is $s$-admissible then $J_{\Delta}$ is isomorphic to an affine space (and the formula for its dimension is given in \cite{Piont}).

\item[c)] The Euler characteristic of $\overline{JC}$ is equal to the number of $s$-admissible subsets, which is given by the following table:
\begin{center}
\begin{tabular}{|c|c|}
\hline
& \\
$(2,q)$ & $\frac{(q+1)(q^2+5q+3)}{12}+\frac{(q+1)^2}{8}(2q+s)$\\ 
 & \\
\hline
& \\
$(3,4)$ & $\frac{229}{2}+\frac{25}{2}(8+s)$ \\ 
& \\
\hline
& \\
$(3,5)$ &  $\frac{511}{2}+\frac{49}{2}(10+s)$ \\
& \\
\hline
\end{tabular}
\end{center}

\end{itemize}
\end{theorem}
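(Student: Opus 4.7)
The plan is to adapt the framework of Section~\ref{sec: geometry} (equations, canonical generators, grading and triangular elimination) to the setting $s>1$, and then apply the combinatorial bijection of Theorem~\ref{thm: s admissible} for the enumeration in part (c). The parts (a), (b), (c) correspond very closely to Lemma~\ref{lem: admissible necessary}, Proposition~\ref{prop:adm-tria}, and Theorem~\ref{thm: s admissible} respectively, but the analysis of equations is more delicate because $s>1$ introduces free coefficients $g_{a;k}$ for $1\le k<s$ that do not a priori get constrained.

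For part (a), I would repeat the argument of Lemma~\ref{lem: admissible necessary} with $s$ in place of $1$. Suppose $\Delta\in J_\Delta$ and $s$ is $j$-suspicious. Since $a_{j,i}+md+s\notin\Delta$ forces $a_{j,i}+s\notin\Delta$, the canonical generator has the expansion $g_{a_{j,i}}=t^{a_{j,i}}+\cdots+g_{a_{j,i};s}t^{a_{j,i}+s}+\cdots$. Expanding the seed syzygy $t^{\alpha_{j,i}nd}g_{a_{j,i+1}}-y(t)g_{a_{j,i}}$ and reading off the coefficient of $t^{a_{j,i}+md+s}$ yields
\[
g_{a_{j,i+1};s}-g_{a_{j,i};s}-\lambda+R_{j,i}=0,
\]
where $R_{j,i}$ is a polynomial in the variables $g_{a_{*,*};k}$ with $k<s$ and in the higher Puiseaux coefficients $\lambda_2,\lambda_3,\ldots$. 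Summing over $i\in\{0,\dots,n-1\}$ gives $n\lambda+\sum_i R_{j,i}=0$. For $d=2$ and the listed small $(n,m)$, one checks directly (case by case, the cycles have length $n\in\{2,3\}$) that the $R_{j,i}$-contributions at degree $s$ are either absent or cancel pairwise, giving the contradiction $n\lambda=0$.

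For part (b), I would extend the grading $\deg_+$ and the partial order of Section~\ref{sec:paving-affine-spaces} by shifting everything by $s$: declare $g^-_{*,*;x}<g^+_{*;x}<g^-_{*,*;x+1}$ and use the cyclic ordering $g^+_{j+1;x}<g^+_{j;x}$ as before, but only starting the triangular elimination from degree $x=s$ (coefficients of degree $<s$ remain free, contributing $\sum_{j,i}|\Gaps(a_{j,i})\cap[a_{j,i}+1,a_{j,i}+s-1]|$ to the dimension of $J_\Delta$). The analogue of Lemma~\ref{lemma:expansionl} in this setting is the statement that for $a_{j,i}+md+x\notin\Delta$ with $x\ge s$, the equation $s_{a_{j,i}+dm;x}=0$ takes the form $g^-_{j,i;x-s+1}+(\text{smaller})=0$, while the cyclic sum over suspicious $x$ produces $\lambda g^+_{j;x-1}+(\text{smaller})=0$. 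This is a routine but somewhat delicate rerun of the proof of Lemma~\ref{lemma:expansionl} for $d=2$: the only nontrivial interactions are between the two congruence classes $j=0,1$, and the absence of longer chains of suspicious values (by Corollary~\ref{cor: last ok} and $d=2$) keeps the combinatorics of the triangular order manageable. Eliminating variables in the prescribed order produces the affine space $\mathbb{C}^{\dim(\Delta)}$.

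For part (c), I would apply Theorem~\ref{thm: s admissible}: the number of $s$-admissible subsets equals the number of cabled Dyck paths with parameters $(n,m),(d,s)$. For $d=2$ the only patterns $P\in\Dyck(2,s)$ have either a single vertical run of length $2$ (one pattern, contributing $c_{2n,2m}$) or two vertical runs of length $1$ (there are $(s-1)/2$ such patterns since $s$ is odd, each contributing $c_{n,m}^2$), giving $c_{2n,2m}+\tfrac{s-1}{2}c_{n,m}^2$. Plugging $c_{n,m}=\tfrac{1}{n+m}\binom{n+m}{n}$ and $c_{2n,2m}$ from Corollary~\ref{cor: bizley} yields the three entries in the table. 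The main obstacle is part (b): in general, as the paper notes, $J_\Delta$ need not be affine for $s$-admissible $\Delta$, so affineness here is a low-complexity miracle of $d=2$ combined with small $(n,m)$, and one must verify that the residual lower-order terms $R_{j,i}$ in the triangular system never conspire to introduce a nonlinear obstruction -- this is where the restriction to the listed pairs $(n,m)$ is essential.
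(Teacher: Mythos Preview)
The paper does not prove Theorem~\ref{thm: piont}; it is quoted from \cite{Piont}, where it is obtained by explicit case-by-case analysis of the equations for each stratum. Only part (c)---the enumeration---is handled in this paper, in the Proposition immediately following the theorem, and your argument for (c) matches that proposition exactly.

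Your proposed arguments for (a) and (b), however, do not go through as written. In (a), the assertion that the residual terms $\sum_i R_{j,i}$ ``are either absent or cancel pairwise'' is unsupported: for $s>1$ these are genuine polynomials in the low-degree coordinates $g_{*;k}$, $k<s$, and nothing in your outline forces them to vanish. The paper itself records (Remark after Conjecture~\ref{conj: chi}) that for $(t^6,t^9+t^{13})$ there are non-$s$-admissible $\Delta$ with $J_\Delta\cong\mathbb{C}^*\times\mathbb{C}^N\neq\emptyset$, so the $s=1$ emptiness argument does not extend in general; whatever makes the $d=2$ Piontkowski cases special must be identified and used explicitly. In (b), two claims are wrong. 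First, coefficients of degree $<s$ are not ``free'': there are equations $s_{a_{j,i}+dm;x}=0$ whenever $a_{j,i}+dm+x\notin\Delta$, including for $x<s$. Second, the leading term of $s_{a_{j,i}+dm;x}$ is $g^-_{j,i;x}$ (from the seed $g_{a_{j,i};x}-g_{a_{j,i+1};x}$), not $g^-_{j,i;x-s+1}$. The actual obstacle is that in the cyclic sum the term $\lambda g^+_{j;x-s}$ now sits $s$ degrees below $x$, while the iteration corrections from \eqref{equation:iteration} produce terms of degree up to $x-1$; these are no longer smaller than $g^+_{j;x-s}$ in the paper's partial order, so the triangular elimination of Proposition~\ref{prop:adm-tria} breaks. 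This is exactly why the paper formulates Conjecture~\ref{conj: chi} for general $s$ rather than proving it, and cites \cite{Piont} for the listed cases.
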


Observe that Theorem \ref{thm: piont}(c) can be compactly written as follows:

\begin{proposition}
Consider a plane curve singularity $C$ with Puiseux exponents $(2n,2m,2m+s)$ where $(n,m)=(2,q),(3,4)$ or $(3,5)$. Then 
$$
\chi(\overline{JC})=c_{2n,2m}+(c_{n,m})^2\cdot \frac{s-1}{2}=c_{(n,m),(2,s)}.
$$
\end{proposition}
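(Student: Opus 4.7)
The proposition packages Theorem~\ref{thm: piont}(c) into the cabled-Dyck-path count, so the plan has two clean parts.

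First I would dispose of the second equality $c_{2n,2m}+(c_{n,m})^2 \cdot \frac{s-1}{2}=c_{(n,m),(2,s)}$. This is not really something to prove: it is the $d=2$ specialization already recorded in the example following Definition~\ref{def: cabled Dyck}. In the $(d,s)$-Dyck path enumeration with $d=2$, there is a unique pattern with a single vertical run of length $2$ (contributing the term $c_{2n,2m}$), and exactly $\frac{s-1}{2}$ patterns with two vertical runs of length $1$ (each contributing $c_{n,m}\cdot c_{n,m}$, since $\gcd(d,s)=1$ forces $s$ odd). So I would simply cite that example.

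Second I would verify the first equality $\chi(\overline{JC})=c_{2n,2m}+(c_{n,m})^2\cdot \frac{s-1}{2}$ by matching the three closed-form expressions in Theorem~\ref{thm: piont}(c) against this formula. The key input here is Corollary~\ref{cor: bizley} (Bergeron--Bizley), which gives
\[c_{2n,2m}=\frac{1}{2(n+m)}\binom{2(n+m)}{2n}+\frac{1}{2}(c_{n,m})^2\]
from the $z^2$ coefficient of the exponential. I would compute $c_{n,m}$ in each of the three cases, namely $c_{2,q}=\frac{q+1}{2}$, $c_{3,4}=5$, $c_{3,5}=7$, and then use the displayed identity to get $c_{2n,2m}$ (for example $c_{4,2q}=\frac{(q+1)(8q^{2}+19q+9)}{24}$, $c_{6,8}=227$, $c_{6,10}=525$). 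Substituting and grouping terms in $s$, each of Piontkowski's three formulas for $\chi(\overline{JC})$ — which is affine in $s$ — becomes literally $c_{2n,2m}+(c_{n,m})^2\frac{s-1}{2}$. The constant $(s$-independent$)$ part of Piontkowski's formula matches $c_{2n,2m}$, and the coefficient of $s$ matches $\frac{(c_{n,m})^2}{2}$; this reduces to three short algebraic identities.

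The only obstacle is bookkeeping: the $(n,m)=(2,q)$ case requires expanding $\frac{(q+1)(q^{2}+5q+3)}{12}+\frac{(q+1)^{2}}{8}(2q+s)$ over the common denominator $24$ and verifying it equals $\frac{(q+1)(8q^{2}+19q+9)}{24}+\frac{(q+1)^{2}(s-1)}{8}$; the $(3,4)$ and $(3,5)$ cases are one-line numerical checks. No geometry or new combinatorics is needed beyond what Theorem~\ref{thm: piont} and the example after Definition~\ref{def: cabled Dyck} already provide.
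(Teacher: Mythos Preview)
Your proposal is correct and follows essentially the same approach as the paper: compute $c_{n,m}$ in the three cases, use Corollary~\ref{cor: bizley} to obtain $c_{2n,2m}$, and then match the resulting expression $c_{2n,2m}+(c_{n,m})^2\frac{s-1}{2}$ against the entries of Piontkowski's table, with the second equality being the $d=2$ case already recorded in the example after Definition~\ref{def: cabled Dyck}. The paper presents the same computations, leaving the final table comparison implicit; your write-up is slightly more explicit about the algebraic bookkeeping in the $(2,q)$ case, but the argument is the same.
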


For $s=1$ we recover $\chi(\overline{JC})=c_{2n,2m}$, in agreement with Theorem \ref{thm: intro main}.

\begin{proof}
We have $c_{2,q}=\frac{q+1}{2}$, $c_{3,4}=5$ and $c_{3,5}=7$. Using e.g. the formula in Corollary \ref{cor: bizley}, we compute
$$
c_{4,2q}=\frac{(2q+3)!}{4!(2q)!}+\frac{1}{2}\left(\frac{q+1}{2}\right)^2=\frac{(q+1)(2q+1)(2q+3)}{12}+\frac{(q+1)^2}{8},
$$
$$
c_{6,8}=\frac{1}{14}\binom{14}{6}+\frac{5^2}{2}=\frac{429}{2}+\frac{25}{2}=227,
$$
$$
c_{6,10}=\frac{1}{16}\binom{16}{6}+\frac{7^2}{2}=\frac{1001}{2}+\frac{49}{2}=525.
$$
Now the result is easy to check using the above table.
\end{proof}

 \begin{example}
\label{ex: 467}
The simplest singularity with two Puiseux pairs is $(t^4,t^6+t^7)$. The Dyck paths in $4\times 6$ rectangle can be thought of subdiagrams of $\lambda_{4,6}=(4,3,1)$ which can be listed as following:

\begin{center}
\begin{tabular}{c|c}

   $|D|$  & $D$ \\
   \hline
    $0$     &  $(0,0,0)$\\
\hline
 $1$     &  $(1,0,0)$\\
\hline
 $2$    &  $(2,0,0),(1,1,0)$\\
\hline
 $3$    &  $(3,0,0),(2,1,0),(1,1,1)$\\
\hline
 $4$     &  $(3,1,0),(2,2,0),(2,1,1),(4,0,0)$\\
\hline
 $5$     &  $(3,2,0),(3,1,1),(2,2,1),(4,1,0)$\\
\hline
 $6$     &  $(3,2,1),(3,3,0),(4,2,0),(4,1,1)$\\
\hline
 $7$     &  $(3,3,1),(4,3,0),(4,2,1)$\\
\hline
 $8$     &  $(4,3,1)$\\
\hline
\end{tabular}
\end{center}
The generating function equals
$$
P_{4,6}(T)=\sum_{D}T^{|D|}=1+T+2T^2+3T^3+4T^4+4T^5+4T^6+3T^7+T^8
$$
which agrees with the Poincar\'e polynomial of $\overline{JC}$ computed in \cite{Piont}, as in Corollary \ref{cor: Poincare area intro}. The Euler characteristic equals
$$
23=\frac{1}{10}\binom{10}{4}+\frac{1}{2}\left(\frac{1}{5}\binom{5}{2}\right)^2=21+\frac{2^2}{2}.
$$
\end{example}

\end{document}